\numberwithin{equation}{section}
\newtheorem{Thm}{Theorem}[section]
\newtheorem{Lem}[Thm]{Lemma}
\newtheorem{Def}[Thm]{Definition}
\newtheorem{Rem}[Thm]{Remark}
\newcommand{\e}\varepsilon
\newcommand{\vs}{\vspace}
\begin{document}

\title[Critical Neumann problem in the half-space]
{Existence  of solutions for critical Neumann problem with superlinear perturbation in the half-space
}
\author[ Y. Deng, L. Shi and X. Zhang]{Yinbin Deng, Longge Shi and Xinyue Zhang}
\footnote{The research was supported by the Natural
	Science Foundation of China (No: 12271196, 11931012).
%
%*Corresponding author.
}
\address[Yinbin Deng]{School of Mathematics and Statistics and Key Laboratory of Nonlinear Analysis $\&$ Applications, Central China Normal University, Wuhan 430079, China}
\email{ybdeng@ccnu.edu.cn}

\address[Longge Shi]{School of Mathematics and Statistics and Key Laboratory of Nonlinear Analysis $\&$ Applications, Central China Normal University, Wuhan 430079, China}
\email{shilongge@mails.ccnu.edu.cn}

\address[Xinyue Zhang]{School of Mathematics and Statistics and Key Laboratory of Nonlinear Analysis $\&$  Applications, Central China Normal University, Wuhan 430079, China}
\email{xinyuezhang@mails.ccnu.edu.cn}

%\maketitle
\date{\today}

\begin{abstract}
In this paper, we consider the existence and multiplicity of solutions for  the critical Neumann problem
\begin{equation}\label{1.1ab}
	\left\{
	\begin{aligned}
		-\Delta {u}-\frac{1}{2}(x \cdot{\nabla u})&= \lambda{|u|^{{2}^{*}-2}u}+{\mu {|u|^{p-2}u}}& \ \ \mbox{in} \ \ \ {{\mathbb{R}}^{N}_{+}}, \\
		 \frac{{\partial u}}{{\partial n}}&=\sqrt{\lambda}|u|^{{2}_{*}-2}u \ & \mbox{on}\ {{\partial {{\mathbb{R}}^{N}_{+}}}},
	\end{aligned}
	\right.
\end{equation}
where $ \mathbb{R}^{N}_{+}=\{(x{'}, x_{N}): x{'}\in {\mathbb{R}}^{N-1}, x_{N}>0\}$, $N\geq3$, $\lambda>0$, $\mu\in \mathbb{R}$, $2< p <{2}^{*}$, $n$ is the outward normal vector at the boundary ${{\partial {{\mathbb{R}}^{N}_{+}}}}$,
$2^{*}=\frac{2N}{N-2}$ is the usual critical exponent for the Sobolev embedding $D^{1,2}({\mathbb{R}}^{N}_{+})\hookrightarrow {L^{{2}^{*}}}({\mathbb{R}}^{N}_{+})$ and ${2}_{*}=\frac{2(N-1)}{N-2}$ is the critical exponent for the Sobolev trace embedding  $D^{1,2}({\mathbb{R}}^{N}_{+})\hookrightarrow {L^{{2}_{*}}}(\partial \mathbb{R}^{N}_{+})$. By establishing an improved Pohozaev identity, we show that problem (\ref {1.1ab}) has no nontrivial  solution if $\mu \le 0$; By applying the Mountain Pass Theorem without $(PS)$ condition and the delicate estimates for Mountain Pass level, we obtain the existence of a positive solution for all $\lambda>0$ and the different values of the parameters $p$ and ${\mu}>0$. Particularly,  for $\lambda >0$, $N\ge 4$,  $2<p<2^*$, we prove that  problem (\ref {1.1ab}) has a positive solution if and only if $\mu >0$. Moreover, the existence of multiple solutions for  (\ref {1.1ab}) is also obtained by dual variational principle for all $\mu>0$ and suitable $\lambda$.
\end{abstract}

\maketitle
\small
\keywords {\noindent {\bf Keywords:} {Self-similar solutions, Half-space, Neumann problem, Critical exponents}
\smallskip
\newline
\subjclass{\noindent {\bf 2020 Mathematics Subject Classification:} 35B09 $\cdot$ 35B33 $\cdot$ 35J66}
}
\vs{2mm}
\section{Introduction}
\vs{2mm}
\setcounter{equation}{0}

 In this paper, we concern with the existence and multiplicity of solutions for the following Neumann problem with critical growth
\begin{equation}\label{1.1}
	\left\{
	\begin{aligned}
		-\Delta {u}-\frac{1}{2}(x \cdot{\nabla u})&= \lambda{|u|^{{2}^{*}-2}u}+{\mu {|u|^{p-2}u}}&\ \ \mbox{in} \ \ \ {{\mathbb{R}}^{N}_{+}}, \\
		 \frac{{\partial u}}{{\partial n}}&=\sqrt{\lambda}|u|^{{2}_{*}-2}u \ & \mbox{on}\ {{\partial {{\mathbb{R}}^{N}_{+}}}},
	\end{aligned}
	\right.
\end{equation}
where ${{\mathbb{R}}^{N}_{+} }:=\{(x{'},x_{N}): x'\in {\mathbb{R}}^{N-1},x_{N}>0 \}$ is the upper half-space, $N\geq3$, $\lambda>0$, $\mu\in\mathbb{R}$, $2< p <2^{*}$, $n$ is the outward normal vector at the boundary ${{\partial {{\mathbb{R}}^{N}_{+}}}}$, $2^{*}=\frac{2N}{N-2}$ is the usual critical exponent for the Sobolev embedding $D^{1,2}({\mathbb{R}}^{N}_{+})\hookrightarrow {L^{{2}^{*}}}({\mathbb{R}}^{N}_{+})$ and ${2}_{*}=\frac{2(N-1)}{N-2}$ is the critical exponent for the Sobolev trace embedding  $D^{1,2}({\mathbb{R}}^{N}_{+})\hookrightarrow {L^{{2}_{*}}}(\partial \mathbb{R}^{N}_{+})$. For convenience, we denote $ \mathbb{R}^{N-1}:=\partial\mathbb{R}^{N}_{+}$ and $\int_{\mathbb{R}^{N-1}}:=\int_{\partial\mathbb{R}^{N}_{+}}$.

Our motivation of investigating problem (\ref{1.1}) relies on the fact that, for $p=2$, ${\mu}=\frac{N-2}{4}$, problem (\ref{1.1}) appears when one tries to find the self-similar solutions with special type (see \cite{29, 2})
\begin{equation*}%\label{a1.2}
v(x,t)=t^{-{\mu}}u(\frac{x}{\sqrt{t}}),~~x \in {{\mathbb{R}}_+^{N}},~~t>0,
\end{equation*}
for the nonlinear heat equation
\begin{equation}\label{1.2}
	\left\{
	\begin{aligned}
		v_{t}-\Delta {v}&=\lambda{|v|^{{2}^{*}-2}v}& \ \ \mbox{in} \ \ \ {{\mathbb{R}}^{N}_{+}}{\times(0,\infty)},\\
		\frac{{\partial v}}{{\partial n}}&=\sqrt{\lambda}{|v|^{{2}_{*}-2}v} \ & \mbox{on} \ {{{\mathbb{R}}^{N-1}}}{\times(0,\infty)}.
	\end{aligned}
	\right.
\end{equation}
A simple calculation shows that $v$ is a solution of equation \eqref{1.2} if $u$ solves \eqref{1.1} with $p=2$ and $\mu =\frac {N-2}{4}$. Self-similar solutions or self-similar variables are important because they preserve the PDE scaling and carry simultaneously information about small and large scale behaviors. Self-similar solutions also provide  qualitative properties like global existence, blow-up and asymptotic behavior (see \cite{CG2021,2,3,4}).

In general, consider the nonlinear boundary value problem
\begin{equation}\label{2}
\left\{
	\begin{aligned}
		-\Delta {u}&=f(x,u,{\nabla u})& \ \ \mbox{in} \ \ \ \Omega, \\
		 \frac{{\partial u}}{{\partial n}}&=g(x,u) \ & \mbox{on}\ {\partial\Omega},
	\end{aligned}
	\right.
\end{equation}
where ${\Omega}\subset{{\mathbb{R}}^{N}}$ and $n$ is the outward normal vector on the boundary ${{\partial \Omega}}$.
Equation (\ref{2}) not only has strong research significance in mathematics, but also can be used to describe many physical and biological phenomena, such as in the study of scalar curvature problems and conformal deformation of Riemannian manifolds (see \cite{01, 9}),  problems of sharp constant in Sobolev trace inequalities (see \cite{18}), population genetics (see \cite{03}), non-Newtonian fluid mechanics (see \cite{04}) and so on.

There are several outstanding works when the function $f$ does not depend on ${\nabla u}$. If both $f$ and $g$ have subcritical growth, \eqref{2} has been studied in \cite{5, 113, 114, 21}.
However, if $f$ or $g$ has critical growth (see \cite{22, 23, 24, 26}), proving the existence of  solutions to equation \eqref{2} becomes difficult.
The main difficulty is that Sobolev embedding or Sobolev trace embedding is not compact. As a result, the functional corresponding to equation \eqref{2} does not satisfy the $(PS)$ condition. To overcome this difficulty, one usually uses the $(PS)_{c}$ condition to substitute the $(PS)$ condition, where $c$ is strictly smaller than the energy threshold.
%The main point is to show that the Mountain Pass level of the energy functional belongs to the range where the $(PS)$ condition holds.
For example, Wang \cite{7} studied equation \eqref{2} when
\begin{equation*}
f(x,u, \nabla u)={u^{{2}^{*}-1}}+h(x,u),
~~~~g(x,u)=-\alpha(x)u ,
\end{equation*}
where ${\Omega}\subset{{\mathbb{R}}^{N}}$ is a bounded domain with $C^{1}$ boundary, $N\geq 3$, $h(x,u)$ is a subcritical perturbation at infinity, $h(x, 0)=0$, and $\alpha(x)$ is a nonnegative function. By using a variant of the Mountain Pass Theorem, Wang obtained the existence of a positive solution with the Mountain Pass value $c\in (0,\frac{1}{2N}S^{\frac{N}{2}})$, where $S$ is the best constant for the Sobolev embedding  $D^{1,2}({\mathbb{R}}^{N})\hookrightarrow {L^{{2}^{*}}}({\mathbb{R}}^{N})$  given by
\begin{equation}\label{1.3}
S:=\inf_{{u\in D^{1,2}({\mathbb{R}}^{N})\setminus \{0\}}}{\frac{{\|\nabla{u}\|_{{L^{2}}(\mathbb{R}^{N})}^{2}}}{{{\|{u}\|_{L^{{2}^{*}}({\mathbb{R}}^{N})}^{2}}}}}.
\end{equation}
In \cite{8}, Deng et al. investigated the existence of a positive solution for equation (\ref{2}) with
\begin{equation}\label{003}
f(x,u, \nabla u)={u^{{2}^{*}-1}}+h(x,u),
~~~~g(u)={u^{{2}_{*}-1}},
\end{equation}
where ${\Omega}$ is a bounded domain in ${{\mathbb{R}}^{N}}$ with $C^{1}$ boundary, $N\geq 3$, $h(x,u)$ is a subcritical perturbation at infinity, and $h(x, 0)=0$.
Due to the fact that both $f$ and $g$ in (\ref{003}) have critical Sobolev growth, Deng et al. no longer used \eqref{1.3} and considered a result of Escobar \cite{9} that the best Sobolev constant $S_{a,b}$ in the  following infimum
\begin{equation*}%\label{1.4}
S_{a,b}:=\inf_{{u\in D^{1,2}({\mathbb{R}}^{N}_{+})}\setminus \{0\}}\frac{{\|\nabla{u}\|_{{L^{2}}(\mathbb{R}^{N}_{+})}^{2}}}{a {{\|{u}\|_{L^{{2}^{*}}({\mathbb{R}}^{N}_{+})}^{2}}}+b{{\|{u}\|_{L^{{2}_{*}}({\mathbb{R}}^{N-1})}^{2}}}}
\end{equation*}
is achieved by the function ${\varphi(x)}=(1+|x{'}|^{2}+|x_{N}+x_{N}^{0}|^{2})^{\frac{2-N}{2}}$,
where $a, b$ are nonnegative constants with $a+b>0$, $ x_{N}^{0}$ is a constant depending only on $a, b, N$.

The problem (\ref{2}) turns to be more complicated if the function $f$ also depends on $\nabla{u}$.
Based on the research of self-similar solutions for the nonlinear heat equation, many researchers are concerned with  the existence and multiplicity of solutions for the nonlinear boundary value problem
%Here, we summarize some known results about the following equation
\begin{equation}\label{4}
\left\{
	\begin{aligned}
		-\Delta {u}&={\mu u}+\frac{1}{2}(x \cdot{\nabla u})+a{|u|^{{p}-2}u}& \ \ \mbox{in} \ \ \ {\mathbb{R}}^{N}_{+},  \ \\
		 \frac{{\partial u}}{{\partial n}}&=\gamma {|u|^{{r}-2}u}+{|u|^{{q}-2}u} \ & \mbox{on}\ {\mathbb{R}}^{N-1}.
	\end{aligned}
	\right.
\end{equation}
In \cite{10}, Ferreira et al. investigated equation \eqref{4} with $N\geq3$, $\mu\in\mathbb{R}$, $a\in\{0,1\}$, $2<p<2^{*}$, $\gamma=0$ and $2<q<2_{*}$. By using (Symmetric) Mountain Pass Theorem, they proved the existence of a positive solution and infinitely many solutions for equation \eqref{4}. Recently, Ferreira et al. \cite{11} considered equation \eqref{4} with $N\geq3$, $\mu\in\mathbb{R}$, $a\in \{0, 1\}$, $\gamma =0$, $p=2^{*}$and $q=2_{*}$. They first showed that the best constant $S_{K}$ given by
\begin{equation}\label{6}
S_{K}:=\inf_{{u\in {D_{K}^{1,2}}({\mathbb{R}}^{N}_{+})\setminus \{0\}}} \frac{{\int_{\mathbb{R}^{N}_{+}}K(x)|\nabla{u}|^{2}dx}}
{\Big({\int_{\mathbb{R}^{N-1}}K(x{'},0)|u|^{{2}_{*}}dx{'}}\Big)^{2/2_{*}}}=S_{T},
\end{equation}
where $S_{T}$ is the best constant of the Sobolev trace embedding $D^{1,2}({\mathbb{R}}^{N}_{+})\hookrightarrow {L^{{2}_{*}}}({\mathbb{R}}^{N-1})$ (see \cite{17,18}) defined by
\begin{equation*}
S_{T}:=\inf_{{u\in D^{1,2}({\mathbb{R}}^{N}_{+})}\setminus \{0\}}{\frac{{\|\nabla{u}\|_{{L^{2}}(\mathbb{R}^{N}_{+})}^{2}}}{{{\|{u\|_{L^{{2}_{*}}({\mathbb{R}}^{N-1})}^{2}}}}}},
\end{equation*}
and $K(x):=e^{|x|^{2}/4}$, $D_{K}^{1,2}(\mathbb{R}^{N}_{+})$ is the closure of ${{C_{c}^{\infty}}(\overline{{{\mathbb{R}}^{N}_{+}}})}$ with respect to the following norm
\begin{equation*}
 \|{u}\|:= \Big({\int_{\mathbb{R}^{N}_{+}}K(x)|\nabla{u}|^{2}dx} \Big)^{\frac{1}{2}}.
\end{equation*}
Next, they established the existence of a positive solution for (\ref {4}) with $\gamma =0$, $p=2^{*}$ and $q=2_{*}$
 if either $a=0$, $N\geq7$ and $\mu\in(\frac{N}{4}+\frac{(N-4)}{8}, \frac{N}{2})$ or $a=1$, $N\geq 3 $ and $\mu\in(\frac{N}{2}-\delta,  \frac{N}{2})$, where $\delta>0$ is a small constant.
   Moreover, some interesting nonexistence results were obtained for problem (\ref {4}) with $N\geq 3$, $\gamma =0$, $p=2^{*}$,  $q=2_{*}$ if $\mu \in (-\infty, \frac N4)\cup [\frac N2, +\infty)$.
 Particularly, a nonexistence result of self-similar solutions to problem \eqref{1.2} with $\lambda=1$ was derived. For $\gamma>0$, Furtado and da Silva \cite{16} obtained the existence of a positive solution for \eqref{4} by the infimum \eqref{6} when $N\geq 4$, $\mu=0$, $a=0$, $2\leq r<2_{*}$ and $q=2_{*}$.
We also refer the interested readers to \cite{013, 021, 022} and their references for various results.

Inspired mainly by  \cite{11, 10, 16}, we consider the existence and multiplicity of solutions for problem \eqref{1.1}. In view of  $\nabla{K(x)}=\frac{1}{2}xK(x)$,  by directly calculating, equation \eqref{1.1} can be written as
\begin{equation}\label{1.5}
	\left\{
	\begin{split}
		-\mbox{div}(K(x)\nabla {u})&= {\mu K(x) {|u|^{p-2}u}}+\lambda K(x){|u|^{{2}^{*}-2}u} \ \ \hspace{0.5cm} \hbox{in} \ \ {\mathbb{R}}^{N}_{+}, \ \  \\
		K(x{'},0)\frac{{\partial u}}{{\partial n}}&=\sqrt{\lambda}K(x{'},0){|u|^{{2}_{*}-2}u} \ \  \ \hspace{2.05cm} \mbox{on} \ {\mathbb{R}}^{N-1},
	\end{split}
	\right.
\end{equation}
which implies that we only need to study the existence and multiplicity of solutions for (\ref {1.5}). It is natural to look for solutions of \eqref{1.5} in the weighted Sobolev space ${{D_{K}^{1,2}}({\mathbb{R}}^{N}_{+})}$. For simplicity, we denote ${{D_{K}^{1,2}}({\mathbb{R}}^{N}_{+})}$ by $X$. %and ${\int_{\mathbb{R}^{N-1}}K(x',0)^{m}H(u)dx{'}}={\int_{\mathbb{R}^{N-1}}K(x',0)^{m}H(\bar{u})dx{'}}$, where$\bar u =\bar u(x')=u(x', 0)$, $K^{m}H(\bar{u})\in L^1(\mathbb{R}^{N-1})$ and $m\in \mathbb{R}$.
For any  $2\leq r\leq 2^{*}, 2\leq q\leq 2_{*}$, define the weighted Lebesgue spaces
\begin{eqnarray}
&&{\ \ \ L^{r}_{K}({\mathbb{R}}^{N}_{+})}:= \bigg\{u \in {L^{r}({\mathbb{R}}^{N}_{+})}:{\int_{\mathbb{R}^{N}_{+}}K(x)|u|^{r}dx} < {\infty}\bigg\}, \notag\\
&&{L^{q}_{K}({\mathbb{R}}^{N-1})}:= \bigg\{u \in {L^{q}({\mathbb{R}}^{N-1})}:{\int_{\mathbb{R}^{N-1}}K(x{'},0)|u|^{q}dx{'}} < {\infty}\bigg\}. \notag
\end{eqnarray}
In \cite{10, 11}, Ferreira et al. proved that the embedding $X \hookrightarrow L^{r}_{K}(\mathbb{R}^{N}_{+})$ is continuous for $r\in[2, 2^{*}]$ and compact for $r\in[2, 2^{*})$, the embedding $X\hookrightarrow {L^{q}_{K}}({\mathbb{R}}^{N-1})$ is continuous for $q\in[2, 2_{*}]$ and compact for $q\in[2, 2_{*})$. Moreover, the first eigenfunction $\varphi_1$ of the linear problem
\begin{equation*}%\label{1.66}
\left\{
	\begin{aligned}
		-\Delta {u}-\frac{1}{2}(x \cdot{\nabla u})&={\hat{\lambda} u}& \ \ \mbox{in} \ \ \ {\mathbb{R}}^{N}_{+},  \ \\
		 \frac{{\partial u}}{{\partial n}}&=0& \mbox{on}\ {\mathbb{R}}^{N-1},
	\end{aligned}
	\right.
\end{equation*}
%\begin{equation}
%-\Delta {u}-\frac{1}{2}(x \cdot{\nabla u})={\hat{\lambda} u} ~\text{in} ~{{\mathbb{R}}^{N}_{+}},\notag\\\\\\
%~~~~\frac{{\partial u}}{{\partial n}}=0          ~\text{on}~{ {{\mathbb{R}}^{N-1}}}
%\end{equation}
is positive or negative. Here we assume that $\varphi_1$ is a positive function. And the corresponding first eigenvalue is characterized by
\begin{equation}
\hat{\lambda}_{1}:=\inf_{u\in X\setminus \{0\}} \frac{ \|{u}\|^{2}}
{{{\|{u}\|}}_{{L_K^{2}({\mathbb{R}}^{N}_{+})}}^{2}}=\frac{N}{2}.\\\label{1.6}
\end{equation}

The energy functional $J_{\lambda, \mu}: X\rightarrow \mathbb{R}$ associated to \eqref{1.5} is defined  by
\begin{equation*}
	J_{\lambda, \mu}(u):={\frac{1}{2}}{\|{u}\|}^{2}-{\frac{\mu }{p}}{\|u\|}_{{L_{K}^{p}}({\mathbb{R}}^{N}_{+})}^{p}-{\frac{\lambda }{2^{*}}}\|u\|_{L^{{2}^{*}}_{K}({\mathbb{R}}^{N}_{+})}^{2^{*}}-{\frac{\sqrt{\lambda} }{2_{*}}}\|u\|_{L^{{2}_{*}}_{K}({\mathbb{R}}^{N-1})}^{2_{*}}.
\end{equation*}
The embedding results in \cite{10, 11} show that $J_{\lambda, \mu}$ is well defined and belongs to $C^{1}(X, \mathbb{R})$. Therefore, for any $u, v\in X$, we have
\begin{equation*}
	\begin{aligned}
		\langle J'_{\lambda, \mu}(u), v \rangle
		&=\int_{\mathbb{R}^{N}_{+}}\big( K(x){\nabla{u}}{\nabla{v}}-\mu K(x)|u|^{p-2}uv-\lambda K(x)|u|^{2^{*}-2}uv\big)dx\\
		&  \ \ \  \ -{\int_{\mathbb{R}^{N-1}}\sqrt{\lambda}K(x{'},0)|u|^{2_{*}-2}uvdx{'}}.
	\end{aligned}
\end{equation*}
Introduce now the modified functional
\begin{equation}
	I_{\lambda, \mu}(u):={\frac{1}{2}}{\|{u}\|}^{2}-{\frac{\mu }{p}}{\|{u_{+}}\|}_{{L_{K}^{p}}({\mathbb{R}}^{N}_{+})}^{p}-{\frac{\lambda }{2^{*}}}{\|{u_{+}}\|}_{L^{{2}^{*}}_{K}({\mathbb{R}}^{N}_{+})}^{2^{*}}-{\frac{\sqrt{\lambda}}{2_{*}}}{\|{u_{+}}\|}_{L^{{2}_{*}}_{K}({\mathbb{R}}^{N-1})}^{2_{*}}  ,\notag
\end{equation}
where $u_{+}= \max \{0,u\}$, $u_{-}= - \min \{0,u\}$. Obviously, any weak solution of \eqref{1.5} is a critical point of $J_{\lambda, \mu}$. The functional $I_{\lambda, \mu}$ is introduced in order to obtain nonnegative critical points for $J_{\lambda, \mu}$. Indeed, if $u\in X$ is a nonzero critical point of $I_{\lambda, \mu}(u)$, then
\begin{equation}
	0=\langle {I{'}_{\lambda, \mu}(u)},{u_{-}}\rangle ={\|{u_{-}}\|}^{2}.\notag
\end{equation}
It follows from (\ref{1.6}) that $u_{-}\equiv 0$. Hence,  it suffices to find a nonzero critical point of $I_{\lambda, \mu}$ in order to obtain a nonnegative weak solution of \eqref{1.5}.

%We state in what follows the main results of this paper:
The aim of this paper is to establish the existence of solutions according to the natural range for the parameters $\lambda$, $\mu$ and $p$. Firstly, using an improved Pohozaev identity and a Hardy-type inequality, we obtain a nonexistence result.

\begin{Thm}\label{Th1.0}
	Let $N\geq3, \lambda>0, \mu\leq0$ and $2<p<2^*$. Suppose that $u\in C^2(\mathbb{R}^N_+)\cap X$ is a solution of equation \eqref{1.1}, then $u\equiv0$.
\end{Thm}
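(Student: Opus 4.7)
My plan is to rewrite \eqref{1.1} in the divergence form $-\operatorname{div}(K\nabla u)=\mu K|u|^{p-2}u+\lambda K|u|^{2^{*}-2}u$ with weighted Neumann datum $K(x',0)\partial u/\partial n=\sqrt{\lambda}K(x',0)|u|^{2_{*}-2}u$, where $K(x)=e^{|x|^{2}/4}$ satisfies $\nabla K=\tfrac{x}{2}K$ and $\operatorname{div}(xK)=(N+\tfrac{|x|^{2}}{2})K$. A central structural feature is that the outward normal to $\mathbb{R}^{N}_{+}$ is $-e_{N}$, so $x\cdot n\equiv 0$ on $\partial\mathbb{R}^{N}_{+}$, which automatically kills the boundary contribution of several divergence identities.

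I would then produce three integral identities by testing the equation against $u$, against $x\cdot\nabla u$, and against $|x|^{2}u$. Testing against $u$ gives the energy identity $\|u\|^{2}=\mu E+\lambda G+\sqrt{\lambda}C$, where $E=\|u\|^{p}_{L^{p}_{K}}$, $G=\|u\|^{2^{*}}_{L^{2^{*}}_{K}}$, $C=\|u\|^{2_{*}}_{L^{2_{*}}_{K}(\partial)}$. Testing against $x\cdot\nabla u$ (the Pohozaev step) produces an identity involving the weighted second-moment quantities $B=\int K|x|^{2}|\nabla u|^{2}$, $F=\int K|x|^{2}|u|^{p}$, $H=\int K|x|^{2}|u|^{2^{*}}$ and $D=\int_{\partial}K|x'|^{2}|u|^{2_{*}}$. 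Testing against $|x|^{2}u$ yields the third (auxiliary) identity $B=NL+\tfrac{M}{2}+\sqrt{\lambda}D+\mu F+\lambda H$, where $L=\int Ku^{2}$ and $M=\int K|x|^{2}u^{2}$. The algebraic heart of the argument is the three-fold identity $\tfrac{N-1}{2_{*}}=\tfrac{N-2}{2}=\tfrac{N}{2^{*}}$: subtracting $\tfrac{N-2}{2}$ times the energy identity from the Pohozaev identity simultaneously cancels the $\|u\|^{2}$, $C$, and $G$ contributions. Substituting the third identity to eliminate $B$ then leaves the clean combined identity
\[
\tfrac{N}{4}L+\tfrac{1}{8}M+\tfrac{\sqrt{\lambda}}{4(N-1)}D+\tfrac{(p-2)\mu}{4p}F+\tfrac{\lambda}{2N}H=\Bigl(\tfrac{N}{p}-\tfrac{N-2}{2}\Bigr)\mu E.
\]

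For $\mu\le 0$ the right-hand side is non-positive while $L,M,D,H$ are all non-negative and $\tfrac{N}{p}-\tfrac{N-2}{2}>0$ since $p<2^{*}$; the only wrong-sign term on the left is $\tfrac{(p-2)\mu}{4p}F$. The main obstacle, for which the Hardy-type inequality is needed, is to show that this single negative contribution cannot overpower the rest. I would derive the weighted Hardy inequality $M\le\tfrac{2}{N}B$ from the identity $-NL-\tfrac{M}{2}=2\int Ku(x\cdot\nabla u)$ (obtained by integration by parts in $xK$) combined with Cauchy--Schwarz and AM--GM; applying the same reasoning to $v=|u|^{p/2}$ yields a bound on $F$ in terms of $\int K|x|^{2}|u|^{p-2}|\nabla u|^{2}$. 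Combining these bounds with the expression for $B$ and the displayed identity forces every non-negative term on the left to vanish; in particular $L=\int Ku^{2}=0$, whence $u\equiv 0$ since $K>0$.
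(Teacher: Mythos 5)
Your three weighted identities are correct as algebra: testing the divergence form against $u$, against $x\cdot\nabla u$, and against $|x|^{2}u$, using $\operatorname{div}(xK)=(N+\tfrac{|x|^{2}}{2})K$ and $x\cdot n=0$ on $\partial\mathbb{R}^{N}_{+}$, does yield exactly your displayed combined identity (the coefficients $\tfrac{1}{4(N-1)}$, $\tfrac{p-2}{4p}$, $\tfrac{1}{2N}$ all check out). This is also a genuinely different route from the paper's: the paper keeps the \emph{unweighted} equation $-\Delta u-\tfrac12(x\cdot\nabla u)=f(u)$ and multiplies only by $u$ and by $x\cdot\nabla u$, so the drift term contributes $-\tfrac{N}{4}\int u^{2}$ and $-\tfrac12\int(x\cdot\nabla u)^{2}$ respectively and no second moments of the nonlinearities ever appear; the two identities combine into $\mu\bigl(\tfrac{N}{p}-\tfrac{N-2}{2}\bigr)\|u\|_{L^{p}}^{p}=\tfrac12\int(x\cdot\nabla u)^{2}-\tfrac{N(N-2)}{8}\|u\|_{L^{2}}^{2}$, and the Hardy inequality $\int(x\cdot\nabla u)^{2}\ge\tfrac{N^{2}}{4}\int u^{2}$ of Lemma \ref{lem1.2} makes the right-hand side $\ge\tfrac{N}{4}\|u\|_{L^{2}}^{2}\ge0$ while the left-hand side is $\le0$.

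Your final step, however, has a genuine gap. For $\mu\le0$ your combined identity rearranges to $\tfrac{N}{4}L+\tfrac18 M+\tfrac{\sqrt{\lambda}}{4(N-1)}D+\tfrac{\lambda}{2N}H=\mu\bigl[(\tfrac{N}{p}-\tfrac{N-2}{2})E-\tfrac{p-2}{4p}F\bigr]$, so to force the left side to vanish you must prove $(\tfrac{N}{p}-\tfrac{N-2}{2})E\ge\tfrac{p-2}{4p}F$, i.e.\ that the weighted second moment $F=\int K|x|^{2}|u|^{p}$ is dominated by the mass $E=\int K|u|^{p}$ with a specific constant. That is a \emph{reverse} Hardy-type inequality and is false for general $u\in X$. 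The substitute you sketch --- running the Hardy argument on $v=|u|^{p/2}$ --- bounds $F$ by $\int K|u|^{p-2}|\nabla u|^{2}$ (up to constants), a quantity that occurs in none of your three identities and is not controlled by $B,L,M,E,H,D$, so the loop does not close. An interpolation $F\le M^{\theta}H^{1-\theta}$ followed by Young's inequality absorbs the bad term only when $|\mu|$ is small relative to $\lambda$, not for all $\mu\le0$. The clean fix is precisely the paper's choice: perform the Pohozaev step on the original (unweighted) operator, where the multiplier $x\cdot\nabla u$ meets the drift $-\tfrac12(x\cdot\nabla u)$ directly and no weighted second moments of the nonlinearities arise. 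Separately, all of these integrations by parts on the unbounded half-space require the cut-off justification ($\psi_{k}=\psi(|x|^{2}/k^{2})$) that occupies most of Lemma \ref{lem1.1}; your proposal passes over this in silence.
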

Next, if $\mu >0$, we give the existence Theorem for problem (\ref {1.1}) as follows:
\begin{Thm}\label{Th1.1}
	For any fixed $\lambda >0$, equation \eqref{1.1} has a positive solution if  one of the following three assumptions holds:
	\vskip 0.2cm
	
	%$(\romannumeral 1)$  $N=4$, $p=2$, $\lambda>0$ and $\mu\in (1, 2)$.
	%\vskip 0.2cm
	
	$(\romannumeral 1)$  $N\geq4$, $2<p<2^*$ and $\mu>0$;
\vskip 0.2cm
	
	$(\romannumeral 2)$  $N=3$, $4<p<6$ and $\mu>0$;
	\vskip 0.2cm
	
	$(\romannumeral 3)$   $N=3$, $2<p\leq4$ and $ \mu>0$ sufficiently large.
\end{Thm}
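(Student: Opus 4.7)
The plan is to obtain a positive solution as a nonzero critical point of the modified functional $I_{\lambda, \mu}$ via the Mountain Pass Theorem, foregoing the full $(PS)$ condition and instead verifying $(PS)_{c}$ only below an explicit compactness threshold.

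First I would verify the Mountain Pass geometry. From the continuous embeddings $X \hookrightarrow L^{r}_{K}(\mathbb{R}^{N}_{+})$ for $r \in [2, 2^{*}]$ and $X \hookrightarrow L^{q}_{K}(\mathbb{R}^{N-1})$ for $q \in [2, 2_{*}]$ of \cite{10, 11}, together with $p, 2^{*}, 2_{*} > 2$, one gets
\[
I_{\lambda, \mu}(u) \geq \tfrac{1}{2}\|u\|^{2} - C\big(\|u\|^{p} + \|u\|^{2^{*}} + \|u\|^{2_{*}}\big),
\]
so $I_{\lambda, \mu}(u) \geq \alpha > 0$ on a small sphere $\|u\| = \rho$. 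For any fixed nonnegative $\varphi \in X\setminus\{0\}$ the dominant $2^{*}$-term forces $I_{\lambda, \mu}(t\varphi) \to -\infty$ as $t \to \infty$, producing $e \in X$ with $\|e\| > \rho$ and $I_{\lambda, \mu}(e) < 0$. A standard deformation argument then yields a Palais--Smale sequence $(u_{n})$ at the Mountain Pass level $c_{MP}$; boundedness of $(u_{n})$ in $X$ follows from combining $I_{\lambda, \mu}(u_{n}) = c_{MP} + o(1)$ with $\langle I'_{\lambda, \mu}(u_{n}), u_{n}\rangle = o(\|u_{n}\|)$, since every nonlinear exponent strictly exceeds $2$.

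Passing to a subsequence, $u_{n} \rightharpoonup u$ in $X$, strongly in $L^{r}_{K}(\mathbb{R}^{N}_{+})$ for $r \in [2, 2^{*})$ and in $L^{q}_{K}(\mathbb{R}^{N-1})$ for $q \in [2, 2_{*})$, and $u$ is automatically a critical point of $I_{\lambda, \mu}$; loss of compactness can only come from the two critical integrals. A concentration--compactness analysis adapted to the weighted half-space and to the boundary trace, in the spirit of \cite{8, 11}, identifies a threshold $c^{*} = c^{*}(N, \lambda)$ tied to the Escobar extremal for the limit problem with coefficients $\lambda$ and $\sqrt{\lambda}$, such that $c_{MP} < c^{*}$ forces $u_{n} \to u$ strongly in $X$ with $u \not\equiv 0$. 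Testing $I'_{\lambda, \mu}(u) = 0$ against $u_{-}$ yields $\|u_{-}\|^{2} = 0$, hence $u_{-} \equiv 0$ by \eqref{1.6}; the strong maximum principle applied to \eqref{1.5} then upgrades $u \geq 0$ to $u > 0$ in $\overline{\mathbb{R}^{N}_{+}}$.

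The main technical obstacle, and the source of the three separate cases, is the verification $c_{MP} < c^{*}$. I would take as test family a cutoff, rescaled, and boundary-translated Escobar bubble $\psi_{\varepsilon}$ concentrating at a point of $\partial \mathbb{R}^{N}_{+}$, and compute
\[
\max_{t \geq 0} I_{\lambda, \mu}(t\psi_{\varepsilon}) = c^{*} + \text{(weight correction from $K$)} - \frac{\mu}{p}\int_{\mathbb{R}^{N}_{+}} K(x)\,\psi_{\varepsilon}^{p}\,dx + \text{l.o.t.}
\]
The $\mu$-integral is the only manifestly negative contribution and must dominate the positive correction coming from $K(x) = e^{|x|^{2}/4}$. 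A careful scaling analysis as $\varepsilon \to 0^{+}$ shows that for $N \geq 4$ this subcritical term wins for every $p \in (2, 2^{*})$ and every $\mu > 0$, giving case $(\romannumeral 1)$. For $N = 3$ we have $2^{*} = 6$ and the critical subcritical exponent is $p = 4$: when $p \in (4, 6)$ the same mechanism still succeeds for every $\mu > 0$ (case $(\romannumeral 2)$), whereas for $p \in (2, 4]$ the subcritical contribution is of the same order as (or weaker than) the correction, and one can only force $c_{MP} < c^{*}$ by taking $\mu$ sufficiently large (case $(\romannumeral 3)$). This Brezis--Nirenberg type dichotomy at the critical dimension $N = 3$, combined with the precise behavior of $\int K \psi_{\varepsilon}^{p}$ against the Gaussian weight, is where the bulk of the technical work concentrates.
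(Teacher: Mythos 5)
Your proposal follows essentially the same route as the paper: Mountain Pass geometry for $I_{\lambda,\mu}$, a $(PS)_c$ condition valid only below the limit-problem energy $A_\lambda=\lambda^{-\frac{N-2}{2}}A$, and fine asymptotic estimates of a cut-off, $K^{-1/2}$-corrected Escobar bubble concentrating at the boundary, with exactly the paper's case split ($N\geq4$ for all $\mu>0$; $N=3$ with $p>4$ for all $\mu>0$; $N=3$, $2<p\leq4$ only for $\mu$ large, since the weight correction is then $O(\varepsilon)$ while the subcritical gain is $O(\varepsilon^{3-p/2})$ or weaker). The only methodological deviation is that you invoke a concentration--compactness analysis to identify the compactness threshold, whereas the paper derives it by a direct comparison of the fibering maxima of the remainder $z_n=u_n-u$ with the two-parameter family $\varphi_{\varepsilon,\tau}$ and the trace/Sobolev quotients (Lemma \ref{lem2.5}); this does not change the substance of the argument.
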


\begin{Rem}
It follows from Theorem \ref {Th1.0} and Theorem \ref {Th1.1} that, for any fixed $\lambda >0$, $N\ge 4$,  $2<p<2^*$,  the problem  (\ref {1.1}) has a positive solution if and only if $\mu >0$. The case for $N=3$, which correspond to the critical dimension, is very complicate. We only get an existence result for $4<p<6$, $\mu >0$ or $2<p\le 4$, $\mu >0$ large enough.
\end{Rem}

\smallskip
In the proof of Theorem \ref{Th1.1}, we mainly apply the ideas introduced by Brezis and Nirenberg in \cite{0002}.
The difficulties here lie in two aspects. The first difficulty is the lack of compactness for the embedding %${D_{K}^{1,2}}({\mathbb{R}}^{N}_+)
$X\hookrightarrow {L_{K}^{2^{*}}}({\mathbb{R}}^{N}_+)$ and %${D_{K}^{1,2}}({\mathbb{R}}^{N}_+)
$X\hookrightarrow {L_{K}^{2_{*}}}({\mathbb{R}}^{N-1})$, which causes the functional $I_{\lambda, \mu}$ not satisfying the $(PS)$ condition.
To overcome this difficulty, we look for a threshold value of functional under which the $(PS)$ sequence is pre-compact, and this idea was originally proposed in \cite{0002, 7}.
%we initially follow the ideas presented in \cite{8, 31} by looking for a threshold value of the functional under which a $(PS)$ sequence is pre-compact.
The second difficulty is the selection of an appropriate test function in the new phenomenon.
%lies in the new phenomenon of selecting test function.
Here, different from the test function used in \cite{8}, we use the following test function
\begin{equation*}
\tilde{U}_{{\varepsilon}}(x)
=K(x)^{-\frac{1}{2}}\phi(x)\frac{\big({\varepsilon^2} N(N-2)\big) ^{\frac{N-2}{4}}}{\big(\varepsilon^{2}+|x'|^{2}+|x_{N}+\varepsilon x_{N}^{0} |^{2}\big)^{\frac{N-2}{2}}},
\end{equation*}
where $\phi(x) \in C_{0}^{\infty}({\mathbb{R}}^{N},[0,1])$ is a cut-off function, $\varepsilon>0$ and $x_{N}^{0} =(N/(N-2))^{1/2}$. Moreover, we perform some fine estimates concerning the asymptotic behavior of $\tilde{U}_{{\varepsilon}}$ when $\varepsilon$ is tending to $0$.
%where $\phi(x) \in C_{0}^{\infty}({\mathbb{R}}^{N},[0,1])$ is a cut-off function. Moreover, we perform some fine estimates concerning the asymptotic behavior of $\tilde{U}_{{\varepsilon}}$ as $\varepsilon \to 0$.
\smallskip

Finally, it is natural to use the dual variational principle to consider the multiplicity of solutions since the functional $J_{\lambda, \mu}$ is even.

\begin{Thm}\label{Th1.2}
	If $N\geq 3$, $p\in(2,2^{*})$ and $\mu>0$, then for each $k=1, 2, \cdots,$ there exists a sequence $\{\lambda_{k}\}\subset (0, +\infty)$ such that equation \eqref{1.1} has $k$ pair of solutions $\{u_{j},-u_{j}\}$, $j=1, 2, \cdots, k$, provided $\lambda\in (0,\lambda_k)$.
\end{Thm}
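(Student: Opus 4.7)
The plan is to apply a symmetric minimax scheme (a Krasnoselski--Ambrosetti--Rabinowitz dual variational principle) to the even $C^1$ functional $J_{\lambda,\mu}$, and to choose the threshold $\lambda_k$ so small that the first $k$ minimax levels lie below the Palais--Smale compactness threshold. Since $J_{\lambda,\mu}(u)=J_{\lambda,\mu}(-u)$, critical points automatically occur in pairs $\{u,-u\}$, so it suffices to produce $k$ distinct critical values (or critical sets of total genus $\geq k$).

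I would first fix $k$ and pick nested subspaces $E_{1}\subset\cdots\subset E_{k}\subset X$ with $\dim E_{j}=j$, consisting of smooth compactly supported functions on which all relevant weighted $L^{r}_{K}$-norms are equivalent to $\|\cdot\|$. Set
$$c_{j}(\lambda):=\inf_{A\in\Gamma_{j}}\,\sup_{u\in A}J_{\lambda,\mu}(u),\qquad 1\le j\le k,$$
where $\Gamma_{j}$ is a standard genus admissible class (for instance a Benci pseudo-index built on $E_{k}$). The mountain-pass geometry is routine: the Sobolev and trace embeddings give $J_{\lambda,\mu}(u)\geq\alpha\|u\|^{2}$ on a small sphere $\|u\|=\rho$, while $J_{\lambda,\mu}|_{E_{j}}\to-\infty$ along each ray because $p>2$ and $\mu>0$ dominate. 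Combined with $J_{\lambda,\mu}\to J_{0,\mu}$ uniformly on bounded sets as $\lambda\to 0^{+}$, this yields $0<\alpha\leq c_{1}(\lambda)\leq\cdots\leq c_{k}(\lambda)\leq M_{k}$ with $M_{k}$ independent of $\lambda\in(0,1]$.

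Next I would establish a local $(PS)_{c}$ condition. A Struwe-type profile decomposition for sequences $(u_{n})\subset X$ with $J_{\lambda,\mu}(u_{n})\to c$ and $J'_{\lambda,\mu}(u_{n})\to 0$ should display two kinds of non-compact behaviour: Aubin--Talenti interior bubbles driven by $\lambda|u|^{2^{*}-2}u$, and Escobar-type boundary bubbles driven by $\sqrt{\lambda}|u|^{2_{*}-2}u$. A direct rescaling using the sharp constants $S$ in \eqref{1.3} and $S_{T}$ in the trace inequality shows every such bubble carries energy of order $\lambda^{-(N-2)/2}$, so $(PS)_{c}$ holds whenever $c<c_{*}(\lambda)\sim c_{0}\lambda^{-(N-2)/2}$. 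Since $c_{*}(\lambda)\to+\infty$ while $c_{k}(\lambda)\leq M_{k}$ stays bounded, one selects $\lambda_{k}>0$ so small that $c_{j}(\lambda)<c_{*}(\lambda)$ for every $1\leq j\leq k$ and every $\lambda\in(0,\lambda_{k})$. The classical even deformation lemma then certifies each $c_{j}(\lambda)$ as a critical value; coincidences $c_{j}=c_{j+l}$ force a critical set of genus $\geq l+1$, still supplying the required pairs and producing $k$ pairs $\{u_{j},-u_{j}\}$ of solutions of \eqref{1.1}.

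The hard part will be the Palais--Smale decomposition. With two critical nonlinearities (interior Sobolev and boundary trace) and the Gaussian weight $K(x)=e^{|x|^{2}/4}$, one must adapt the Brezis--Lieb splitting and bubble analysis carefully to separate interior from boundary concentrations, exclude mixed profiles whose energy could fail to scale as $\lambda^{-(N-2)/2}$, and confirm this threshold blow-up as $\lambda\to 0^{+}$. Once that step is in place, the remainder is a fairly standard application of genus-based minimax to an even functional with controlled local compactness.
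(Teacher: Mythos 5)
Your overall architecture is the same as the paper's: an even, genus-based minimax for $J_{\lambda,\mu}$, minimax levels bounded uniformly in $\lambda$, and a $(PS)_c$ threshold of order $\lambda^{-(N-2)/2}$ that escapes to $+\infty$ as $\lambda\to0^+$, so that an explicit $\lambda_k$ puts the first $k$ levels below it. The two supporting steps are, however, executed differently. For the uniform bound on the levels, the paper compares $J_{\lambda,\mu}$ with the $\lambda$-independent functional $J_*(u)=\frac12\|u\|^2-\frac{\mu}{p}\|u\|^p_{L^p_K(\mathbb{R}^N_+)}$ (your $J_{0,\mu}$): since $J_{\lambda,\mu}\le J_*$ pointwise and the admissible classes are nested, $\Gamma_*^k\subset\Gamma_{\lambda,\mu}^k$, one gets $c^j_{\lambda,\mu}\le c_*^k$ for all $j\le k$ and the explicit choice $\lambda_k=(A/c_*^k)^{2/(N-2)}$. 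Your appeal to ``uniform convergence on bounded sets'' glosses over the fact that the admissible classes themselves depend on $\lambda$ through the set $\{J_{\lambda,\mu}\ge0\}$ entering the definition of the homeomorphisms $h$; the inclusion of classes is what legitimizes the comparison, and it is worth making explicit. More substantially, the step you defer as ``the hard part''---a Struwe profile decomposition separating interior Aubin--Talenti bubbles from Escobar boundary bubbles in the weighted space---is not needed and is not what the paper does: Lemma \ref{lem2.5} already yields $(PS)_c$ for every $c<A_\lambda=\lambda^{-(N-2)/2}A$ via a Brezis--Lieb splitting of $z_n=u_n-u$ followed by a comparison of $\sup_{t>0}\hat I_{\lambda,\mu}(tz_n)$ with the explicit two-parameter family $\varphi_{\varepsilon,\tau}$ attaining $S_\theta$. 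This softer argument automatically handles the ``mixed profiles'' you worry about, because $\varphi_{\varepsilon,\tau}$ interpolates between interior and boundary concentration and $A_\lambda=\inf_u\sup_t\Phi_\lambda(tu)$ is by construction the least energy of any non-compact residue; if you insisted on the bubble classification you would still have to prove that no mixed bubble has energy below $A\lambda^{-(N-2)/2}$, which is exactly what the paper's comparison delivers for free. So your plan is viable, but the piece you identify as hardest is precisely the one the paper replaces by an already-established and more elementary lemma.
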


%\noindent\textbf{Remark 1.2.} \emph{Notice that in Theorem \ref{Th1.2}, we only need $N\geq 3$  to obtain the existence of infinitely many solutions of equation \eqref{1.1}. However, we are not clear about the other properties of the solutions.}
%\smallskip
The paper is organized as follows. By means of an improved Pohozaev identity and a Hardy-type inequality, a nonexistence result is obtained  in Section \ref{S1}. In Section \ref{S2}, we verify that $I_{\lambda, \mu}$ satisfies the geometric conditions of the Mountain Pass Theorem and establish the local compactness for $I_{\lambda, \mu}$ under the assumption
\eqref{2.8}. In Section \ref{S3}, we complete the proof of Theorem \ref {Th1.1} by verifying that the assumption \eqref{2.8} holds. We are committed to providing careful estimates of $\tilde{U}_{\varepsilon}$. %We devote to giving fine estimates.
In Section \ref{S4}, we consider the existence of multiple solutions by dual variational principle and finish the proof of Theorem \ref {Th1.2}.

\vs{3mm}
\section{The Nonexistence result}\label{S1}
\vs{2mm}

In this section, we establish a nonexistence result for problem \eqref{1.1}. To this end, we consider the following general Neumann problem
\begin{equation}\label{a1}
	\left\{
	\begin{aligned}
		-\Delta {u}-\frac{1}{2}(x \cdot{\nabla u})&= f(u)&\ \ \mbox{in} \ \ \ {{\mathbb{R}}^{N}_{+}}, \\
		 \frac{{\partial u}}{{\partial n}}&=g(u) \ & \mbox{on}\ { {{\mathbb{R}}^{N-1}}},
	\end{aligned}
	\right.
\end{equation}
where $N\geq3$ and functions $f$, $g$ satisfy the assumptions stated below:

$(f_1)$  $f, g:\mathbb{R}\rightarrow \mathbb{R}$ are continuous;

$(f_2)$  there exist two positive constants $C_1$ and $C_2$ such that
$$
0\le tf(t) \le C_1(t^2+|t|^{2^*}) \ \ \mbox{and} \ \ 0\le tg(t) \le C_2(t^2+|t|^{2_*})
$$
for all $t\in \mathbb{R}$.
	
We first state an improved Pohozaev identity for problem \eqref{a1} by a truncation argument.

\begin{Lem}\label{lem1.1} (Pohozaev identity) Suppose that $N\geq3$ and $f$, $g$ satisfy assumptions $(f_1)$-$(f_2)$. If $u\in C^2(\mathbb{R}^N_+)\cap X$ is a solution of problem \eqref{a1}, then there hold
\begin{equation}\label{a2}
{\|\nabla{u}\|_{{L^{2}}(\mathbb{R}^{N}_{+})}^{2}}-{\int_{\mathbb{R}^{N}_{+}}uf(u)dx}
-{\int_{\mathbb{R}^{N-1}}ug(u)dx'}
=-\frac{N}{4}\|u\|_{{L^{2}}(\mathbb{R}^{N}_{+})}^{2}
\end{equation}
and
\begin{equation}\label{a3}
\frac{N-2}{2}{\|\nabla{u}\|_{{L^{2}}(\mathbb{R}^{N}_{+})}^{2}}
-N{\int_{\mathbb{R}^{N}_{+}}F(u)dx}
-(N-1){\int_{\mathbb{R}^{N-1}}G( u)dx'}=-\frac{1}{2}{\int_{\mathbb{R}^{N}_{+}}(x \cdot{\nabla u})^2dx}.
\end{equation}
\end{Lem}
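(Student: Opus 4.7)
The plan is to obtain (\ref{a2}) by multiplying (\ref{a1}) by $u$ and integrating, and to obtain (\ref{a3}) by the classical Pohozaev trick of multiplying by $x\cdot\nabla u$. A cutoff/truncation step is required because $u$ lives on the unbounded domain $\mathbb{R}^N_+$, and the technical crux will be to pass to the limit using the exponential weight $K(x)=e^{|x|^2/4}$ that is built into the norm $\|\cdot\|$.

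\textbf{Step 1 (Identity (\ref{a2})).} I would multiply the equation in (\ref{a1}) by $u$ and integrate by parts over $\mathbb{R}^N_+$ (with a cutoff, to be removed). The term $\int(-\Delta u)u$ produces $\|\nabla u\|_{L^2}^{2}-\int_{\mathbb{R}^{N-1}}u\,g(u)\,dx'$ via the boundary condition. The drift term is handled by
\[
\int_{\mathbb{R}^N_+}u(x\cdot\nabla u)\,dx=\tfrac{1}{2}\int_{\mathbb{R}^N_+}x\cdot\nabla(u^2)\,dx=\tfrac{1}{2}\int_{\mathbb{R}^{N-1}}(x\cdot n)u^2\,dx'-\tfrac{N}{2}\int_{\mathbb{R}^N_+}u^2\,dx=-\tfrac{N}{2}\|u\|_{L^2}^{2},
\]
where I used the crucial geometric fact that the outward normal on $\partial\mathbb{R}^N_+$ is $n=-e_N$, so $x\cdot n=-x_N\equiv 0$ on the boundary. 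Rearranging gives (\ref{a2}).

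\textbf{Step 2 (Identity (\ref{a3})).} Multiply the equation by $x\cdot\nabla u$ and integrate. The standard Pohozaev computation yields
\[
\int_{\mathbb{R}^N_+}(-\Delta u)(x\cdot\nabla u)\,dx=\tfrac{2-N}{2}\|\nabla u\|_{L^2}^{2}+\tfrac{1}{2}\int_{\mathbb{R}^{N-1}}(x\cdot n)|\nabla u|^2\,dx'-\int_{\mathbb{R}^{N-1}}(x\cdot\nabla u)\tfrac{\partial u}{\partial n}\,dx'.
\]
Again the $(x\cdot n)$-boundary term vanishes, so I may use $\partial u/\partial n=g(u)$ and, writing $x=(x',0)$ on the boundary, integrate by parts on $\mathbb{R}^{N-1}$:
\[
\int_{\mathbb{R}^{N-1}}(x'\cdot\nabla_{x'}u)\,g(u)\,dx'=\int_{\mathbb{R}^{N-1}}x'\cdot\nabla_{x'}G(u)\,dx'=-(N-1)\int_{\mathbb{R}^{N-1}}G(u)\,dx'.
\]
The nonlinearity on the right is handled by $\int_{\mathbb{R}^N_+}f(u)(x\cdot\nabla u)\,dx=\int_{\mathbb{R}^N_+}x\cdot\nabla F(u)\,dx=-N\int_{\mathbb{R}^N_+}F(u)\,dx$, again because $x\cdot n=0$. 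The drift contribution on the left contributes the quadratic term $-\tfrac{1}{2}\int(x\cdot\nabla u)^2\,dx$. Combining everything and rearranging produces (\ref{a3}).

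\textbf{Step 3 (Truncation, the main obstacle).} The above manipulations are formal and need to be justified on the unbounded domain. I would introduce a radial cutoff $\eta_R(x)=\eta(|x|/R)$ with $\eta\equiv 1$ on $[0,1]$, $\eta\equiv 0$ on $[2,\infty)$, multiply (\ref{a1}) first by $\eta_R u$ and then by $\eta_R(x\cdot\nabla u)$, carry out the integration by parts on the bounded region $\{|x|<2R\}\cap\mathbb{R}^N_+$ (where no boundary terms at infinity arise and the $C^2$ hypothesis on $u$ is used), and then let $R\to\infty$. The main difficulty is controlling the remainder integrals involving $\nabla\eta_R$, which contain factors of $|x|/R$ on the annulus $R\le|x|\le 2R$. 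Here the weight $K(x)=e^{|x|^2/4}$ enters decisively: the assumption $u\in X$ forces $\int K(x)|\nabla u|^2\,dx<\infty$ and, via $(f_2)$ together with the continuous embeddings $X\hookrightarrow L^{r}_{K}(\mathbb{R}^N_+)$ and $X\hookrightarrow L^{q}_{K}(\mathbb{R}^{N-1})$ recalled in the introduction, the analogous integrability for $u$, $|u|^{2^{*}}$, $|u|^{2_{*}}$ and $F(u)$, $G(u)$. Since $e^{|x|^2/4}$ dominates any polynomial in $|x|$ on the annulus, each remainder containing $\nabla\eta_R$ is dominated by $R^{-1}$ times a tail of an $L^{1}_{K}$-convergent integral, hence vanishes as $R\to\infty$. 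This simultaneously justifies the decay needed for both identities and completes the proof.
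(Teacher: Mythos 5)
Your proposal is correct and follows essentially the same route as the paper: test the equation against a cutoff times $u$ and a cutoff times $x\cdot\nabla u$, exploit the flatness of $\partial\mathbb{R}^N_+$ (so the $x\cdot n$ boundary terms vanish and $x\cdot\nabla u=x'\cdot\nabla_{x'}u$ there), integrate by parts on $\mathbb{R}^{N-1}$ for the $G$-term, and remove the cutoff using that $u\in X$ with the weight $K(x)=e^{|x|^2/4}$ dominating polynomials together with $(f_2)$ and the embeddings. The only cosmetic differences are your choice $\eta(|x|/R)$ versus the paper's $\psi(|x|^2/k^2)$ and the slightly loose bookkeeping of the remainder bounds (some are $O(1/R)$ times bounded quantities, others are $O(1)$ times tails of convergent integrals), which does not affect the conclusion.
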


\begin{proof}
Let $\psi \in C_0^{\infty}([0,\infty),[0,1])$ be a cut-off function such that  $\psi \equiv 1$ in $[0, 1]$, $\psi\equiv 0$ in $[4, \infty)$
and $\|\psi'\|_{{L^{\infty}}([0,\infty))}$ is bounded. For any $k\geq1$, define $\psi_k(x):=\psi({|x|^2}/{k^2})$.

Firstly, multiplying the first equation of \eqref{a1} by $\psi_ku$ and integrating both sides over $\mathbb{R}^{N}_{+}$, we have
\begin{equation}\label{a4}
		-\int_{\mathbb{R}^{N}_{+}}\psi_ku\Delta {u}dx-\frac{1}{2}\int_{\mathbb{R}^{N}_{+}}\psi_ku(x \cdot{\nabla u})dx=\int_{\mathbb{R}^{N}_{+}}\psi_ku f(u)dx.
\end{equation}
From the divergence Theorem, we conclude that
\begin{equation*}
	\begin{aligned}
		-\int_{\mathbb{R}^{N}_{+}}(\psi_ku)\Delta {u}dx
&=\int_{\mathbb{R}^{N}_{+}}u(\nabla{\psi_k}\cdot\nabla {u})dx+
\int_{\mathbb{R}^{N}_{+}}{\psi_k}|\nabla {u}|^2dx-
\int_{\mathbb{R}^{N-1}}\psi_k  u\frac{{\partial u}}{{\partial n}}dx'\\
&=\frac{2}{k^2}\int_{\mathbb{R}^{N}_{+}}{\psi'}\Big(\frac{|x|^2}{k^2}\Big)(x\cdot\nabla {u})udx+
\int_{\mathbb{R}^{N}_{+}}{\psi_k}|\nabla {u}|^2dx-
\int_{\mathbb{R}^{N-1}}\psi_k ug(u)dx'.
	\end{aligned}
\end{equation*}
Since $u\in X$ and $\|\psi'\|_{{L^{\infty}}([0,\infty))}$ is bounded, we get for $k$ large enough,
\begin{equation}\label{a4'}
		-\int_{\mathbb{R}^{N}_{+}}(\psi_ku)\Delta {u}dx
=\int_{\mathbb{R}^{N}_{+}}{\psi_k}|\nabla {u}|^2dx-
\int_{\mathbb{R}^{N-1}}\psi_k ug(u)dx'+o_k(1),
\end{equation}
where $o_k(1)\rightarrow 0$ as $k\rightarrow \infty$. It follows from the Fubini Theorem, the divergence Theorem and the fundamental Theorem of calculus that
\begin{equation}\label{a5}
	\begin{aligned}
		& \ \ \ \ \int_{\mathbb{R}^{N}_{+}}\mbox{div}(\psi_k|u|^2x)dx
		=\int_{B^+_{2k}}\mbox{div}(\psi_k|u|^2x)dx\\
		&=\int_{0}^{2k}\int_{\hat{B}_{2k}} \mbox{div}_{x'}(\psi_k|u|^2x')dx'dx_N+
\int_{\hat{B}_{2k}}\int_{0}^{2k}(\psi_k|u|^2x_N)_{x_N}dx_Ndx'=0,
	\end{aligned}
\end{equation}
where $B_{2k}^{+}:=B_{2k}(0)\cap {\mathbb{R}}^{N}_{+}$ and  $\hat{B}_{2k}:=\hat{B}_{2k}(0)\subset {\mathbb{R}}^{N-1}$. On the other hand, for $k$ large enough, there holds
\begin{equation}\label{a5'}
	\begin{aligned}
		\int_{\mathbb{R}^{N}_{+}}\mbox{div}(\psi_k|u|^2x)dx
&=N\int_{\mathbb{R}^{N}_{+}}\psi_k|u|^2dx
+\int_{\mathbb{R}^{N}_{+}}|u|^2(x\cdot\nabla \psi_k)dx
+2\int_{\mathbb{R}^{N}_{+}}\psi_ku(x\cdot\nabla u)dx\\
&=N\int_{\mathbb{R}^{N}_{+}}\psi_k|u|^2dx
+2\int_{\mathbb{R}^{N}_{+}}\psi_ku(x\cdot\nabla u)dx+o_k(1).
	\end{aligned}
\end{equation}
 Combining \eqref{a5} with \eqref{a5'}, we conclude that %for $k$ large enough,
\begin{equation}\label{a6}
\int_{\mathbb{R}^{N}_{+}}\psi_ku(x\cdot\nabla u)dx=-\frac{N}{2}\int_{\mathbb{R}^{N}_{+}}\psi_k|u|^2dx+o_k(1).
\end{equation}
In view of \eqref{a4}, \eqref{a4'} and \eqref{a6}, one has for $k$ large enough,
\begin{equation*}
\int_{\mathbb{R}^{N}_{+}}{\psi_k}|\nabla {u}|^2dx-\int_{\mathbb{R}^{N}_{+}}\psi_ku f(u)dx-
\int_{\mathbb{R}^{N-1}}\psi_kug(u)dx'
+\frac{N}{4}\int_{\mathbb{R}^{N}_{+}}\psi_k|u|^2dx
=o_k(1).
\end{equation*}
Letting $k\rightarrow\infty$,  \eqref{a2} follows from the  growth condition $(f_2)$, Sobolev embedding and the Lebesgue dominated convergence Theorem.

Next, multiplying the first equation of \eqref{a1} by $\psi_k(x\cdot \nabla u)$ and integrating both sides over $\mathbb{R}^{N}_{+}$, one has
\begin{equation}\label{bb1}
		-\int_{\mathbb{R}^{N}_{+}}\psi_k(x\cdot \nabla u)\Delta {u}dx-\frac{1}{2}\int_{\mathbb{R}^{N}_{+}}\psi_k(x \cdot{\nabla u})^2dx=\int_{\mathbb{R}^{N}_{+}}\psi_k(x\cdot \nabla u)f(u)dx.
\end{equation}
Let $F_1:=(x\cdot \nabla u)\nabla u$ and $F_2:={x|\nabla u|^2}/{2}$. Simple computation yields that
\begin{equation}\label{bb2}
\psi_k(x\cdot \nabla u)\Delta {u}=\psi_k \mbox{div}(F_1-F_2)+\frac{N-2}{2}\psi_k|\nabla u|^2.
\end{equation}
Note that $$\psi_k \mbox{div}(F_1-F_2)
=\mbox{div}\big(\psi_k (F_1-F_2)\big)-(F_1-F_2)\nabla\psi_k.$$
 Using the boundedness of $\psi'$, we get that for $k$ large enough,
\begin{equation*}
		\int_{\mathbb{R}^{N}_{+}}\psi_k \mbox{div}(F_1-F_2)dx
=\int_{\mathbb{R}^{N}_{+}}\mbox{div}\big(\psi_k (F_1-F_2)\big)dx+o_k(1).
\end{equation*}
Similar as in \eqref{a5}, we have
\begin{equation*}
	\begin{aligned}
		& \ \ \ \ \int_{\mathbb{R}^{N}_{+}}\mbox{div}(\psi_kF_1)dx
		=\int_{B^+_{2k}}\mbox{div}\big(\psi_k(x\cdot \nabla u)\nabla u\big)dx\\
		&=\int_{0}^{2k}\int_{\hat{B}_{2k}}\mbox{div}_{x'}\big(\psi_k(x\cdot \nabla u)\nabla_{x'}u\big)dx'dx_N+
		\int_{\hat{B}_{2k}}\int_{0}^{2k}\big(\psi_k(x\cdot \nabla u)u_{x_N}\big)_{x_N}dx_Ndx'\\
		&=-\int_{\hat{B}_{2k}}\psi_k(x'\cdot \nabla_{x'} u)u_{x_N}dx'
		=-\int_{{\mathbb{R}}^{N-1}}\psi_k(x'\cdot \nabla_{x'} u)u_{x_N}dx'\\
		&=\int_{{\mathbb{R}}^{N-1}}\psi_k(x',0)(x'\cdot \nabla _{x'}u)g(u)dx'
	\end{aligned}
\end{equation*}
and 
\begin{equation*}
\int_{\mathbb{R}^{N}_{+}}\mbox{div}(\psi_kF_2)dx=0.
\end{equation*}
Then, we have for $k$ large enough,
\begin{equation}\label{bb3}
		\int_{\mathbb{R}^{N}_{+}}\psi_k \mbox{div}(F_1-F_2)dx
=\int_{{\mathbb{R}}^{N-1}}\psi_k(x'\cdot \nabla_{x'}u)g(u)dx'+o_k(1).
\end{equation}
It follows from \eqref{bb1}-\eqref{bb3} that for $k$ large enough,
\begin{equation}\label{bb4}
	\begin{aligned}
& \ \ \ \ \frac{N-2}{2}\int_{\mathbb{R}^{N}_{+}}\psi_k|\nabla u|^2dx+\int_{\mathbb{R}^{N}_{+}}\psi_k(x\cdot \nabla u)f(u)dx+\int_{{\mathbb{R}}^{N-1}}\psi_k(x'\cdot \nabla_{x'}u)g(u)dx'\\
&=-\frac{1}{2}\int_{\mathbb{R}^{N}_{+}}\psi_k(x \cdot{\nabla u})^2dx+o_k(1).
	\end{aligned}
\end{equation}
Using the same argument as \eqref{a5} gives that
\begin{equation}\label{bb5}
		\int_{\mathbb{R}^{N}_{+}}\mbox{div}(\psi_kF(u)x)dx
=0.
\end{equation}
Moreover, for $k$ large enough,
\begin{equation}\label{bb6}
	\begin{aligned}
		& \ \ \ \ \int_{\mathbb{R}^{N}_{+}}\mbox{div}(\psi_kF(u)x)dx\\
&=N\int_{\mathbb{R}^{N}_{+}}\psi_kF(u)dx
+\int_{\mathbb{R}^{N}_{+}}(x\cdot\nabla \psi_k)F(u)dx
+\int_{\mathbb{R}^{N}_{+}}\psi_kf(u)(x\cdot\nabla u)dx\\
&=N\int_{\mathbb{R}^{N}_{+}}\psi_kF(u)dx
+\int_{\mathbb{R}^{N}_{+}}\psi_kf(u)(x\cdot\nabla u)dx+o_k(1),
	\end{aligned}
\end{equation}
since the condition $(f_2)$ holds. In view of \eqref{bb5} and \eqref{bb6}, one has %that for $k$ large enough,
    \begin{equation}\label{bb7}
\int_{\mathbb{R}^{N}_{+}}\psi_kf(u)(x\cdot\nabla u)dx=-N\int_{\mathbb{R}^{N}_{+}}\psi_kF(u)dx+o_k(1).
\end{equation}
Similarly, we obtain
    \begin{equation}\label{bb8}
\int_{\mathbb{R}^{N-1}}\psi_kg(u)(x'\cdot\nabla_{x'}u)dx' =-(N-1)\int_{\mathbb{R}^{N-1}}\psi_kG(u)dx'+o_k(1).
\end{equation}
We conclude from \eqref{bb4}, \eqref{bb7} and \eqref{bb8} that
\begin{equation*}%\label{bb}
	\begin{aligned}
& \ \ \ \ \frac{N-2}{2}\int_{\mathbb{R}^{N}_{+}}\psi_k|\nabla u|^2dx-N\int_{\mathbb{R}^{N}_{+}}\psi_kF(u)dx-(N-1)\int_{\mathbb{R}^{N-1}}\psi_kG( u)dx'\\
&=-\frac{1}{2}\int_{\mathbb{R}^{N}_{+}}\psi_k(x \cdot{\nabla u})^2dx+o_k(1).
	\end{aligned}
\end{equation*}
By letting $k\rightarrow\infty$ and using the Lebesgue dominated convergence Theorem, we easily obtain identity \eqref{a3}. %The Lemma is proved.
\end{proof}
Next, we state a Hardy-type inequality which will be necessary to the proof of Theorem \ref{Th1.0}.
\begin{Lem}\label{lem1.2}(\cite{11}, Proposition 3.3)
	If $N\geq3$, then for any $u\in X$, there holds
\begin{equation*}
\frac{N^2}{4}\int_{\mathbb{R}^{N}_{+}} u^2dx\leq\int_{\mathbb{R}^{N}_{+}} (x \cdot{\nabla u})^2dx.
\end{equation*}
\end{Lem}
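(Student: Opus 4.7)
My plan is to reduce the stated inequality to the classical Hardy-type argument based on integration by parts and the Cauchy--Schwarz inequality, taking advantage of the fact that the half-space geometry makes the boundary contribution vanish automatically.

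First, I would work with a test function $u\in C_{c}^{\infty}(\overline{\mathbb{R}^{N}_{+}})$ and apply the divergence theorem to the vector field $x\,u^{2}$ on $\mathbb{R}^{N}_{+}$. Since the outward normal on $\partial\mathbb{R}^{N}_{+}=\{x_{N}=0\}$ is $n=-e_{N}$, the boundary integrand $u^{2}(x\cdot n)=-u^{2}x_{N}$ vanishes on the boundary, while in the interior $\mathrm{div}(x u^{2})=N u^{2}+2u(x\cdot\nabla u)$. This produces the key identity
\[
\int_{\mathbb{R}^{N}_{+}} u(x\cdot\nabla u)\,dx=-\frac{N}{2}\int_{\mathbb{R}^{N}_{+}}u^{2}\,dx.
\]
Applying the Cauchy--Schwarz inequality to the left-hand side and squaring immediately gives the desired bound with constant $N^{2}/4$ on smooth, compactly supported functions.

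The second step is to extend the inequality from $C_{c}^{\infty}(\overline{\mathbb{R}^{N}_{+}})$ to all of $X$ by density. For this I must check that both sides of the inequality are continuous with respect to the norm $\|\cdot\|$ on $X$. The right-hand side is handled via the pointwise estimate $|x|^{2}\le C\,K(x)$, which follows from the boundedness of $t\mapsto t\,e^{-t/4}$ on $[0,\infty)$; this yields $\int_{\mathbb{R}^{N}_{+}}(x\cdot\nabla u)^{2}\,dx\le C\|u\|^{2}$ and therefore $x\cdot\nabla u_{n}\to x\cdot\nabla u$ in $L^{2}(\mathbb{R}^{N}_{+})$ whenever $u_{n}\to u$ in $X$. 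The left-hand side is controlled by combining $K\ge 1$ with the continuous embedding $X\hookrightarrow L^{2}_{K}(\mathbb{R}^{N}_{+})$ recorded in the introduction, which gives $\int u_{n}^{2}\to\int u^{2}$ along the same sequence. A standard limiting argument then transfers the inequality to all of $X$.

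The only non-routine ingredient I foresee is the continuity of $u\mapsto\int(x\cdot\nabla u)^{2}\,dx$ under the weighted norm, which hinges on the observation that the polynomial weight $|x|^{2}$ is dominated by the Gaussian weight $K(x)$. Once this is in hand, no conceptual obstacle remains: the half-space geometry makes the boundary flux cancel for free, so the proof reduces to the classical Hardy-type manipulation on all of $\mathbb{R}^{N}$.
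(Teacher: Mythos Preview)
Your argument is correct. The paper does not supply its own proof of this lemma; it simply quotes the result from \cite{11} (Proposition~3.3). Your route---the divergence identity $\int_{\mathbb{R}^N_+} u(x\cdot\nabla u)\,dx=-\tfrac{N}{2}\int_{\mathbb{R}^N_+}u^2\,dx$ on test functions (with the boundary flux vanishing because $x\cdot n=-x_N=0$ on $\partial\mathbb{R}^N_+$), followed by Cauchy--Schwarz and a density extension via $|x|^2\le C\,K(x)$ and $X\hookrightarrow L^2_K(\mathbb{R}^N_+)$---is exactly the standard Hardy-type manipulation and goes through without issue. Incidentally, the very identity you use is derived (with a cut-off) in the paper's proof of the Pohozaev lemma as equation~\eqref{a6}, so your approach is fully consistent with the surrounding machinery even though the paper itself outsources the statement.
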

\begin{proof}[\bf{Proof of Theorem \ref{Th1.0}}]
Taking $f(u):=\lambda{|u|^{{2}^{*}-2}u}+{\mu {|u|^{p-2}u}}$ and  $g(u):=\sqrt{\lambda}{|u|^{{2}_{*}-2}u}$, then we conclude from Lemma \ref{lem1.1} that
\begin{equation*}
{\|\nabla{u}\|_{{L^{2}}(\mathbb{R}^{N}_{+})}^{2}}-\lambda{\|{u}\|}_{L^{{2}^{*}}({\mathbb{R}}^{N}_{+})}^{2^{*}}
-\sqrt{\lambda}{\|{u}\|}_{L^{{2}_{*}}({\mathbb{R}}^{N-1})}^{2_{*}}
-\mu{\|{u}\|}_{L^p({\mathbb{R}}^{N}_{+})}^{p}
=-\frac{N}{4}\|u\|_{{L^{2}}(\mathbb{R}^{N}_{+})}^{2}
\end{equation*}
and
\begin{equation*}
	\begin{aligned}
\frac{N-2}{2}\big({\|\nabla{u}\|_{{L^{2}}(\mathbb{R}^{N}_{+})}^{2}}
-\lambda{\|{u}\|}_{L^{{2}^{*}}({\mathbb{R}}^{N}_{+})}^{2^{*}}
-\sqrt{\lambda}{\|{u}\|}_{L^{{2}_{*}}({\mathbb{R}}^{N-1})}^{2_{*}}\big)
-\frac{\mu N}{p}{\|{u}\|}_{L^p({\mathbb{R}}^{N}_{+})}^{p} =-\frac{1}{2}{\int_{\mathbb{R}^{N}_{+}}(x \cdot{\nabla u})^2dx},
	\end{aligned}
\end{equation*}
which give that
\begin{equation}\label{p1}
\mu \Big(\frac{N}{p}-\frac{N-2}{2}\Big){\|{u}\|}_{L^p({\mathbb{R}}^{N}_{+})}^{p}
=\frac{1}{2}{\int_{\mathbb{R}^{N}_{+}}(x \cdot{\nabla u})^2dx}-\frac{N(N-2)}{8}\|u\|_{{L^{2}}(\mathbb{R}^{N}_{+})}^{2}.
\end{equation}
We derive from (\ref{p1}) and Lemma \ref{lem1.2} that
\begin{equation*}
\frac{N}{4}\|u\|_{{L^{2}}(\mathbb{R}^{N}_{+})}^{2}\leq\mu \Big(\frac{N}{p}-\frac{N-2}{2}\Big){\|{u}\|}_{L^p({\mathbb{R}}^{N}_{+})}^{p}.
\end{equation*}
Due to $2<p<2^*$, one has that $u\equiv0$ if $\mu\leq0$. The proof is finished.
\end{proof}

\vs{2mm}
\section{A local compactness result}\label{S2}
\vs{2mm}

In this section, we are going to verify that $I_{\lambda, \mu}$ satisfies the geometric
conditions of the Mountain Pass Theorem and then establish the local compactness for $I_{\lambda, \mu}$ under the assumption \eqref{2.8}. In the following, we always assume that $N\geq3$, $\lambda, \mu>0$ and $2< p<2^*$.

 %We first recall a few Lemmas in \cite{01, 8, 31}.
From Theorem 3.3 in \cite{01}, we can conclude the following Lemma.
\begin{Lem}\label{lem2.1}
	For any $\theta \in (0,1]$, the infimum
\begin{equation}
S_{\theta}:=\inf_{{u\in D^{1,2}({\mathbb{R}}^{N}_{+})}\setminus \{0\}}\frac{{\|\nabla{u}\|_{{L^{2}}(\mathbb{R}^{N}_{+})}^{2}}}{\theta {{\|{u}\|_{L^{{2}^{*}}({\mathbb{R}}^{N}_{+})}^{2}}}+(1-\theta){{\|{u}\|_{L^{{2}_{*}}({\mathbb{R}}^{N-1})}^{2}}}}\\\label{2.1}
\end{equation}
is achieved by
%the function ${\varphi(x)}=(1+|x{'}|^{2}+|x_{N}+x_{N}^{0}|^{2})^{\frac{2-N}{2}}$, or after rescaling
 the function
\begin{equation}
{\varphi_{\varepsilon}(x)}=\bigg(\frac{\varepsilon}{{\varepsilon}^{2}+|x{'}|^{2}+|x_{N}+{\varepsilon}x_{N}^{0}|^{2}}\bigg)^{\frac{N-2}{2}},\notag
\end{equation}
where $\varepsilon>0$, $x'\in \mathbb{R}^{N-1}$, $ x_{N}^{0}$ is a constant depending only on $\theta$ and $N$.
\end{Lem}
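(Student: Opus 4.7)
The plan is to follow the strategy of Escobar (reference \cite{9}), which was already invoked earlier in the introduction for the closely related constant $S_{a,b}$; the only new feature here is the slightly different normalization $\theta + (1-\theta) = 1$ in the denominator. First I would observe that the quotient defining $S_\theta$ is invariant under three groups of transformations on $D^{1,2}(\mathbb{R}^N_+)$: horizontal translations $u(x', x_N) \mapsto u(x'-y', x_N)$ with $y' \in \mathbb{R}^{N-1}$, parabolic dilations $u(x) \mapsto t^{(N-2)/2}u(tx)$ with $t>0$, and orthogonal reflections in the variable $x'$. These symmetries mean that once a minimizer is found, we may normalize its horizontal center to $0$ and its scale parameter to $\varepsilon = 1$, leaving a single free parameter.

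The next step is to establish the existence of a minimizer. I would apply the concentration-compactness principle of P.-L. Lions in the half-space setting: given a minimizing sequence $\{u_n\} \subset D^{1,2}(\mathbb{R}^N_+)$ normalized by $\theta \|u_n\|_{L^{2^*}}^2 + (1-\theta)\|u_n\|_{L^{2_*}}^2 = 1$, after rescaling and translating (using the invariances above) we may assume the sequence does not vanish and does not dichotomize. Since the only obstruction to compactness for both $\|\cdot\|_{L^{2^*}}$ and $\|\cdot\|_{L^{2_*}}$ is bubbling concentration, and because the parameters have been fixed by invariance, $u_n$ converges weakly to a non-zero function $u_*$ which, by the strict subadditivity (guaranteed for $\theta \in (0,1]$), is actually strong convergence, hence a minimizer.

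Any such minimizer $u_*$, taken to be positive by $|u_*|$ having the same quotient and a standard maximum principle, satisfies the Euler–Lagrange system
\begin{equation*}
\left\{\begin{aligned}
-\Delta u_* &= \alpha \theta\, u_*^{2^*-1} && \text{in } \mathbb{R}^N_+,\\
\frac{\partial u_*}{\partial n} &= \alpha(1-\theta)\, u_*^{2_*-1} && \text{on } \partial \mathbb{R}^N_+,
\end{aligned}\right.
\end{equation*}
for some Lagrange multiplier $\alpha > 0$. At this point I would invoke the classification result of Escobar (and subsequent extensions via moving-plane/Kelvin-transform arguments) for positive solutions of such mixed critical problems in the half-space: every such solution must be the restriction to $\mathbb{R}^N_+$ of a Talenti-type bubble centered at some point $(0, -\varepsilon x_N^0)$ lying in the lower half-space, i.e.\ of the form $\varphi_\varepsilon$ stated in the lemma up to the admissible symmetries. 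Substituting $\varphi_\varepsilon$ into the boundary condition, using $\partial_n \varphi_\varepsilon|_{x_N=0} = (N-2)\varepsilon x_N^0(\varepsilon^2 + |x'|^2 + \varepsilon^2 (x_N^0)^2)^{-N/2}$ and $\varphi_\varepsilon^{2_*-1}|_{x_N=0} = (\varepsilon^2+|x'|^2+\varepsilon^2 (x_N^0)^2)^{-N/2}$ (after the appropriate rescaling), yields a single algebraic equation for $x_N^0$ in terms of $\theta$ and $N$, establishing the existence of the required constant.

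The main obstacle will be the classification step: asserting that positive finite-energy solutions of the mixed Neumann system are exactly the shifted Aubin–Talenti bubbles. This requires either an appeal to the detailed classification in \cite{01,9} via conformal equivalence between the half-space and the ball (which turns the mixed problem into a scalar curvature / Yamabe type problem) or a direct moving-plane argument adapted to the Neumann condition with a nonlinear flux. Once that is granted, the remaining computations — verifying that $\varphi_\varepsilon$ lies in $D^{1,2}(\mathbb{R}^N_+)$, evaluating its three norms in closed form via the substitution $y = (x'/(|x_N+\varepsilon x_N^0|), 1)$ and a one-dimensional integral, and matching them against the Euler–Lagrange equation — are routine and yield the explicit value of $x_N^0$.
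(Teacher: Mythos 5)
Your proposal is correct in outline, but it is worth noting that the paper offers no proof at all here: Lemma~\ref{lem2.1} is justified by the single sentence ``From Theorem 3.3 in \cite{01}, we can conclude the following Lemma,'' i.e.\ the authors import Escobar's result wholesale, extremal function and all. You instead reconstruct the argument (invariances, existence of a minimizer, Euler--Lagrange system, classification, algebraic determination of $x_N^0$), deferring only the classification of positive finite-energy solutions of the mixed critical system to \cite{01,9} --- which is exactly the content of the cited Theorem 3.3, so your key input coincides with the paper's. Your added scaffolding is essentially sound and more informative, with one soft spot: the ``strict subadditivity'' you invoke to upgrade weak to strong convergence cannot hold in the form stated, because both the numerator $\|\nabla u\|_{L^2}^2$ and the constraint functional $\theta\|u\|_{L^{2^*}}^2+(1-\theta)\|u\|_{L^{2_*}}^2$ are $2$-homogeneous, so the constrained infimum $I(\lambda)$ is exactly linear in $\lambda$ and never strictly subadditive. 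The correct mechanism for ruling out loss of compactness in this limit-case problem is the concentration-compactness lemma for measures (comparing $S_\theta$ with the pure half-space Sobolev constant and the pure trace constant to exclude concentration at interior points, boundary points, and infinity), or Escobar's original conformal-equivalence argument transferring the problem to the ball. Your final computation of $x_N^0$ from the boundary condition is correct up to the power of $\varepsilon$, which you rightly note is absorbed by rescaling.
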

%\smallskip

For $\tau\geq0$, set
\begin{equation}\label{2.2}
{\varphi}_{{\varepsilon},{\tau}}(x):=\bigg(\frac{{\varepsilon}{\sqrt{N(N-2)}}}{{\varepsilon}^{2}+|x{'}|^{2}+|x_{N}+{\varepsilon}{\tau}x_{N}^{0}|^{2}}\bigg)^{\frac{N-2}{2}}, \
x_{N}^{0}:={\sqrt{\frac{N}{N-2}}}.
\end{equation}
It is easy to check that ${\varphi}_{{\varepsilon},{\tau}}$ satisfies
\begin{equation}\label{2.3}
	\left\{
	\begin{aligned}
		-\Delta {u}&=u^{{2}^{*}-1}& \ \ \mbox{in} \ \ \ {{\mathbb{R}}^{N}_{+}}, \ \ \\
		\frac{{\partial u}}{{\partial n}}&=\tau u^{{2}^{*}-1} \ & \ \ \mbox{on}\ \mathbb{R}^{N-1}.
	\end{aligned}
	\right.
\end{equation}
Let
\begin{equation*}
 {\theta}:=\frac{{{\|{{\varphi}_{{\varepsilon},{\tau}}}\|_{L^{{2}^{*}}({\mathbb{R}}^{N}_{+})}^{2^{*}-2}}}}
{{{\|{{\varphi}_{{\varepsilon},{\tau}}}\|_{L^{{2}^{*}}({\mathbb{R}}^{N}_{+})}^{2^{*}-2}}}+{\tau}{{\|{{{\varphi}_{{\varepsilon},{\tau}}}}\|_{L^{{2}_{*}}({\mathbb{R}}^{N-1})}^{2_{*}-2}}}}, \end{equation*}
which is independent of $\varepsilon$. Then ${\varphi}_{{\varepsilon},{\tau}}(x)$ reaches the infimum $S_{\theta}$.

Denote
\begin{equation*}
\Phi_{\lambda}(u):={\frac{1}{2}}{\|\nabla{u}\|_{{L^{2}}(\mathbb{R}^{N}_{+})}^{2}}-{\frac{\lambda }{2^{*}}}{\|{u_{+}}\|}_{L^{{2}^{*}}({\mathbb{R}}^{N}_{+})}^{2^{*}}-{\frac{\sqrt{\lambda} }{2_{*}}}{\|{u_{+}}\|}_{L^{{2}_{*}}({\mathbb{R}}^{N-1})}^{2_{*}} % \label{2.4}
\end{equation*}
and set
\begin{equation}
A_\lambda:={\inf\limits_{{u\in D^{1,2}({\mathbb{R}}^{N}_{+})}\setminus\{0\}}}\ {\sup\limits_{t>0}}\ \Phi_{\lambda}(tu).\label{2.5}
\end{equation}
\begin{Lem}\label{lem2.2}(\cite{31}, Lemma 2.4)
	The infimum $A_{\lambda}$ is achieved by
$\psi_{\lambda,\varepsilon}=\lambda^{-\frac{N-2}{4}}{\varphi}_{{\varepsilon},{1}}$
and
\begin{equation*}
A_\lambda={\lambda^{-\frac{N-2}{2}}}\bigg({\frac{1}{2}}\|\nabla{{\varphi}_{{\varepsilon},{1}}}\|_{{L^{2}}(\mathbb{R}^{N}_{+})}^{2}-
{\frac{1}{2^{*}}}{\|{\varphi}_{{\varepsilon},{1}}\|}_{L^{{2}^{*}}({\mathbb{R}}^{N}_{+})}^{2^{*}}
-{\frac{1} {2_{*}}}{\|{\varphi}_{{\varepsilon},{1}}\|}_{L^{{2}_{*}}({\mathbb{R}}^{N-1})}^{2_{*}}  \bigg).
\end{equation*}
\end{Lem}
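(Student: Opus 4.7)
The strategy is to reduce the problem to $\lambda=1$ by scaling, identify $\varphi_{\varepsilon,1}$ as a nonnegative ground state of the reduced functional, and then match the minimax level via the sharp constants from Lemma~\ref{lem2.1}. First I would introduce $v:=\lambda^{(N-2)/4}u$: the critical exponents $2^{*}$ and $2_{*}$ are precisely tuned so that the three relevant powers of $\lambda$ in $\Phi_\lambda(u)$ all collapse to $\lambda^{-(N-2)/2}$, yielding $\Phi_\lambda(u)=\lambda^{-(N-2)/2}\Phi_1(v)$ and hence $A_\lambda=\lambda^{-(N-2)/2}A_1$. A minimiser for $A_1$ therefore produces, via $u=\lambda^{-(N-2)/4}v$, the claimed minimiser $\psi_{\lambda,\varepsilon}$ for $A_\lambda$, and it suffices to prove the case $\lambda=1$.

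Next, since $\|\nabla u_{+}\|_{L^2}\le\|\nabla u\|_{L^2}$ and the negative parts of $\Phi_1$ only involve $u_{+}$, one has $\sup_{t>0}\Phi_1(tu_{+})\le\sup_{t>0}\Phi_1(tu)$, so the infimum may be restricted to nonnegative $u$. A direct computation on the ansatz \eqref{2.2} with $x_{N}^{0}=\sqrt{N/(N-2)}$ shows that $\varphi_{\varepsilon,1}$ is a positive critical point of $\Phi_1$; in particular $\|\nabla\varphi_{\varepsilon,1}\|_{L^2}^{2}=\|\varphi_{\varepsilon,1}\|_{L^{2^{*}}}^{2^{*}}+\|\varphi_{\varepsilon,1}\|_{L^{2_{*}}}^{2_{*}}$. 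The scalar function $t\mapsto\Phi_1(t\varphi_{\varepsilon,1})$ then has $t=1$ as its unique positive critical point with strictly negative second derivative $-(2^{*}-2)\|\varphi_{\varepsilon,1}\|_{L^{2^{*}}}^{2^{*}}-(2_{*}-2)\|\varphi_{\varepsilon,1}\|_{L^{2_{*}}}^{2_{*}}$, so $t=1$ is its unique positive maximiser and $\sup_{t>0}\Phi_1(t\varphi_{\varepsilon,1})=\Phi_1(\varphi_{\varepsilon,1})$ is exactly the expression stated in the lemma, establishing the upper bound $A_1\le\Phi_1(\varphi_{\varepsilon,1})$.

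The main obstacle is the reverse inequality $A_1\ge\Phi_1(\varphi_{\varepsilon,1})$. For any $u\ge 0$ with $u\not\equiv 0$ the supremum is attained at a unique $t_u>0$ satisfying the Nehari identity $\|\nabla u\|_{L^2}^{2}=t_{u}^{2^{*}-2}\|u\|_{L^{2^{*}}}^{2^{*}}+t_{u}^{2_{*}-2}\|u\|_{L^{2_{*}}}^{2_{*}}$, and eliminating the gradient gives $\sup_{t>0}\Phi_1(tu)=\tfrac{1}{N}t_{u}^{2^{*}}\|u\|_{L^{2^{*}}}^{2^{*}}+\tfrac{1}{2(N-1)}t_{u}^{2_{*}}\|u\|_{L^{2_{*}}}^{2_{*}}$. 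The plan is to apply Lemma~\ref{lem2.1} at the distinguished weight $\theta_{0}:=\|\varphi_{\varepsilon,1}\|_{L^{2^{*}}}^{2^{*}-2}/\bigl(\|\varphi_{\varepsilon,1}\|_{L^{2^{*}}}^{2^{*}-2}+\|\varphi_{\varepsilon,1}\|_{L^{2_{*}}}^{2_{*}-2}\bigr)$, for which $\varphi_{\varepsilon,1}$ is an extremal of $S_{\theta_{0}}$, and combine the resulting sharp inequality $\|\nabla u\|_{L^2}^{2}\ge S_{\theta_{0}}\bigl(\theta_{0}\|u\|_{L^{2^{*}}}^{2}+(1-\theta_{0})\|u\|_{L^{2_{*}}}^{2}\bigr)$ with the Nehari identity to bound $t_u$ from below; a rearrangement then yields $\sup_{t>0}\Phi_1(tu)\ge\Phi_1(\varphi_{\varepsilon,1})$, with equality forcing $u$ to be proportional to a rescaled extremal of $S_{\theta_{0}}$. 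Scaling back recovers the explicit value of $A_\lambda$.
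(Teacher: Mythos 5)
The paper offers no proof of this lemma at all: it is imported verbatim from \cite{31} (Xie, Lemma 2.4), so there is no internal argument to compare yours against. Your first two steps are sound and consistent with what the paper uses implicitly in \eqref{2000}: the scaling $u=\lambda^{-\frac{N-2}{4}}v$ does collapse all three $\lambda$-powers to $\lambda^{-\frac{N-2}{2}}$, the restriction to nonnegative competitors is harmless, and the upper bound $A_1\le \Phi_1(\varphi_{\varepsilon,1})=\frac{K_2}{N}+\frac{K_3}{2(N-1)}$ follows correctly from the identity $K_1=K_2+K_3$.

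The lower bound, which you yourself identify as the main obstacle, has a genuine gap: the single inequality $\|\nabla u\|_{L^2}^2\ge S_{\theta_0}\big(\theta_0\|u\|_{L^{2^*}}^2+(1-\theta_0)\|u\|_{L^{2_*}}^2\big)$ together with the Nehari identity is not strong enough to imply $\sup_{t>0}\Phi_1(tu)\ge\Phi_1(\varphi_{\varepsilon,1})$. Indeed, set $B:=t_u^{2^*}\|u\|_{L^{2^*}}^{2^*}$ and $C:=t_u^{2_*}\|u\|_{L^{2_*}}^{2_*}$; substituting the Nehari identity into the sharp inequality makes the $t_u$'s cancel, and your two inputs reduce exactly to the single scalar constraint
\begin{equation*}
B+C\;\ge\; S_{\theta_0}\big(\theta_0 B^{2/2^*}+(1-\theta_0)C^{2/2_*}\big),
\end{equation*}
from which you must deduce $\frac{B}{N}+\frac{C}{2(N-1)}\ge \frac{K_2}{N}+\frac{K_3}{2(N-1)}$. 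This implication is false. Since $\varphi_{\varepsilon,1}$ attains $S_{\theta_0}$ and $K_1=K_2+K_3$, one computes $S_{\theta_0}\theta_0=K_2^{(2^*-2)/2^*}$ and $S_{\theta_0}(1-\theta_0)=K_3^{(2_*-2)/2_*}$; hence the point $(B,C)=(K_2,0)$ satisfies the constraint (with equality) but gives the strictly smaller value $\frac{K_2}{N}$, and likewise $(0,K_3)$ gives $\frac{K_3}{2(N-1)}$. In other words, $(K_2,K_3)$ is only a critical point of the linear functional on the constraint curve (the Lagrange condition holds with multiplier $\tfrac12$), not its minimum over the admissible region, which leaks along the coordinate axes because the two weights $\tfrac1N$ and $\tfrac{1}{2(N-1)}$ are unequal. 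To close the argument one cannot work with the single weight $\theta_0$: one must use the whole family $\{S_\theta\}_{\theta\in(0,1]}$, choosing $\tau$ (hence $\theta$) adapted to the competitor $u$ so that $\|u\|_{L^{2^*}}^2/\|u\|_{L^{2_*}}^2$ matches the corresponding ratio for $\varphi_{\varepsilon,\tau}$ (with the degenerate case handled by $\theta=1$) --- this is precisely the two-case comparison the paper performs in the proof of Lemma \ref{lem2.5} --- and then additionally verify that $\tau\mapsto\sup_{t>0}\Phi_1(t\varphi_{\varepsilon,\tau})$ is minimized at $\tau=1$. Neither step appears in your outline.
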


For simplicity, we define
\begin{equation}\label{2.111}
{U}_{\varepsilon}:={\varphi}_{{\varepsilon},{1}},~
K_{1}:={\|\nabla{{U}_{\varepsilon}}\|_{{L^{2}}(\mathbb{R}^{N}_{+})}^{2}},~
K_{2}:={\|{{U_{\varepsilon}}}\|}_{L^{{2}^{*}}({\mathbb{R}}^{N}_{+})}^{2^{*}},~
K_{3}:={\|{{U_{\varepsilon}}}\|}_{L^{{2}_{*}}({\mathbb{R}}^{N-1})}^{2_{*}}.
\end{equation}
It follows from Lemma \ref{lem2.2} and \eqref{2.3} that
\begin{equation}\label{2000}
A_\lambda=\lambda^{-\frac{N-2}{2}}A,
~A={\frac{K_{1}}{2}}-{\frac{K_{2} }{2^{*}}}-{\frac{K_{3} }{2_{*}}} \ \ and \ \
~K_{1}-K_{2}-K_{3}=0.
\end{equation}

The proof of following Lemma is similar to Theorem 1.1 in \cite{11}, we omit details  here.
\begin{Lem}\label{lem2.4}
	For any $\theta \in (0,1]$, let
\begin{equation}
S_{\theta}^{K}:=\inf_{u\in X\setminus \{0\}}\frac{{\|\nabla{u}\|_{{L_{K}^{2}}(\mathbb{R}^{N}_{+})}^{2}}}{\theta {{\|{u}\|_{L^{{2}^{*}}_{K}({\mathbb{R}}^{N}_{+})}^{2}}}+(1-\theta){{\|{u}\|_{L^{{2}_{*}}_{K}({\mathbb{R}}^{N-1})}^{2}}}},\notag
\end{equation}
then $S_{\theta}^{K}=S_{\theta}$.
\end{Lem}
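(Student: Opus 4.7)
My plan is to prove $S_\theta^K = S_\theta$ by establishing the two inequalities $S_\theta^K \ge S_\theta$ and $S_\theta^K \le S_\theta$ separately, using the substitution $v = K^{1/2} u$ for the lower bound and a concentrating rescaling for the upper bound.

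For the lower bound $S_\theta^K \ge S_\theta$, the key is the substitution $v := K^{1/2} u$, which converts the weighted functional into an unweighted one with additional non-negative lower-order terms. For $u \in C_c^\infty(\overline{\mathbb{R}^N_+})$, $v$ is smooth and compactly supported, hence in $D^{1,2}(\mathbb{R}^N_+)$. Using $\nabla K / K = x/2$ for $K = e^{|x|^2/4}$, a direct computation yields the pointwise identity
\[
K|\nabla u|^2 = |\nabla v|^2 - \tfrac{1}{4}\, x\cdot\nabla(v^2) + \tfrac{|x|^2}{16}\, v^2.
\]
Integrating over $\mathbb{R}^N_+$ and performing integration by parts on the middle term (the boundary contribution vanishes since $x\cdot n = 0$ on $\partial \mathbb{R}^N_+$) gives
\[
\|\nabla u\|_{L^2_K}^2 \;=\; \|\nabla v\|_{L^2}^2 + \tfrac{N}{4}\|v\|_{L^2}^2 + \tfrac{1}{16}\int_{\mathbb{R}^N_+}|x|^2 v^2\,dx \;\ge\; \|\nabla v\|_{L^2}^2.
\]
For the critical norms, $K|u|^r = K^{1-r/2}|v|^r \le |v|^r$ with $r\in\{2^*,2_*\}$ (since $1 - r/2 < 0$ and $K\ge 1$), hence $\|u\|_{L^r_K}^2 \le \|v\|_{L^r}^2$. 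The unweighted Sobolev--trace inequality applied to $v$ then gives
\[
\frac{\|\nabla u\|_{L^2_K}^2}{\theta \|u\|_{L^{2^*}_K}^2 + (1-\theta)\|u\|_{L^{2_*}_K}^2} \;\ge\; \frac{\|\nabla v\|_{L^2}^2}{\theta \|v\|_{L^{2^*}}^2 + (1-\theta)\|v\|_{L^{2_*}}^2} \;\ge\; S_\theta.
\]
Extending this to all $u \in X$ by density of $C_c^\infty(\overline{\mathbb{R}^N_+})$ in $X$, and then taking the infimum, yields $S_\theta^K \ge S_\theta$.

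For the upper bound $S_\theta^K \le S_\theta$, I would use a $D^{1,2}$-invariant concentrating rescaling. For $u\in C_c^\infty(\overline{\mathbb{R}^N_+}) \subset X$ and $t\ge 1$, set $u_t(x) := t^{(N-2)/2}u(tx)$. Each $u_t$ lies in $X$ (its support is $\mathrm{supp}(u)/t$, still compact) and the unweighted Sobolev--trace quotient is invariant under this scaling. After the substitution $y = tx$, the weighted norms of $u_t$ become integrals with weight $e^{|y|^2/(4t^2)}$, which decreases monotonically to $1$ on the fixed support of $u$ as $t\to \infty$. By dominated convergence, the weighted ratio of $u_t$ converges to the unweighted ratio of $u$, so $S_\theta^K$ is bounded above by the unweighted ratio of any $u\in C_c^\infty(\overline{\mathbb{R}^N_+})$. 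Density of $C_c^\infty(\overline{\mathbb{R}^N_+})$ in $D^{1,2}(\mathbb{R}^N_+)$ then yields $S_\theta^K \le S_\theta$.

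The main obstacle will be the density argument in the lower bound: while the substitution and identity are transparent for smooth compactly supported $u$, extending them to arbitrary $u \in X$ requires showing that $u\mapsto K^{1/2}u$ defines a continuous map from $X$ into $D^{1,2}(\mathbb{R}^N_+)\cap L^2(|x|^2\,dx)$. This is obtained by applying the identity to differences $u_n - u_m$ of an arbitrary Cauchy sequence in $X$: the three non-negative pieces $\|\nabla(v_n - v_m)\|_{L^2}^2$, $\tfrac{N}{4}\|v_n - v_m\|_{L^2}^2$ and $\tfrac{1}{16}\int|x|^2(v_n - v_m)^2\,dx$ are each controlled by $\|u_n - u_m\|_X^2$, so the limit $v = K^{1/2}u$ lies in the required space and the identity and inequalities pass to the limit.
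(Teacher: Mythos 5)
Your proposal is correct. Note that the paper itself omits the proof of this lemma, referring to Theorem 1.1 of the cited reference of Ferreira--Furtado--Medeiros--da Silva, so there is no in-paper proof to match verbatim; your lower bound is exactly the standard substitution argument used there and implicitly throughout Section 3 of this paper (the identity $K|\nabla(K^{-1/2}w)|^2=|\nabla w|^2-\tfrac14 x\cdot\nabla(w^2)+\tfrac{|x|^2}{16}w^2$ is precisely what produces the three terms in the expansion of $\|\tilde U_\varepsilon\|^2$ in Lemma 3.1, and the boundary term $x\cdot n=0$ on $\partial\mathbb{R}^N_+$ is what makes the cross term contribute $+\tfrac N4\|v\|_{L^2}^2\ge 0$). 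Where you genuinely diverge is the upper bound: the reference (and the spirit of this paper's Section 3) obtains $S_\theta^K\le S_\theta$ by testing with the concentrated family $K^{-1/2}\phi\,\varphi_{\varepsilon,\tau}$ and carrying out asymptotic expansions as $\varepsilon\to0$, whereas you use the $D^{1,2}$-invariant rescaling $u_t(x)=t^{(N-2)/2}u(tx)$ of an arbitrary $u\in C_c^\infty(\overline{\mathbb{R}^N_+})$, under which all three weighted integrals acquire the weight $e^{|y|^2/(4t^2)}\to1$ on the fixed compact support. This is cleaner and avoids the extremal functions entirely; the exponents check out ($t^{N}$ for the gradient and $L^{2^*}$ terms, $t^{N-1}$ for the trace term), and dominated convergence applies since the weight is bounded by $e^{R^2/4}$ for $t\ge1$. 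One small simplification you could make: the density step in the lower bound does not require tracking $v=K^{1/2}u$ for general $u\in X$; since the weighted quotient is continuous on $X\setminus\{0\}$ (by the continuous embeddings $X\hookrightarrow L^{2^*}_K(\mathbb{R}^N_+)$ and $X\hookrightarrow L^{2_*}_K(\mathbb{R}^{N-1})$ and positivity of the denominator for $u\ne0$), the pointwise bound $\ge S_\theta$ on the dense subspace $C_c^\infty(\overline{\mathbb{R}^N_+})$ already forces $S_\theta^K\ge S_\theta$.
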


 Now, we are going to verify that $I_{\lambda, \mu}$ has a Mountain Pass structure.

\begin{Lem}\label{lem2.3}
	%Assume that $N\geq3$, $p\in (2,2^{*})$, $\lambda>0$ and $\mu>0$. Then
The functional $I_{\lambda, \mu}$ satisfies the following three items:

$(I_1)~I_{\lambda, \mu}(0)=0;$

$(I_2)$~\text{there exist}~${\alpha},\rho >0$~\text{such that}~$I_{\lambda, \mu}(u)\geq  \alpha$~\text {for any} ~${\|{u}\|}=\rho$;

$(I_3)$~\text{there exists}~${e}\in X$~\text{such that}~${\|{e}\|}>\rho~$\text{and}~$I_{\lambda, \mu}(e)<0$.

\end{Lem}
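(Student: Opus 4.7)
The plan is to check each of the three items in turn, using only the continuous embeddings $X \hookrightarrow L^r_K(\mathbb{R}^N_+)$ for $r \in [2,2^*]$ and $X \hookrightarrow L^q_K(\mathbb{R}^{N-1})$ for $q \in [2,2_*]$ recalled after \eqref{1.6}, together with the fact that $p, 2^*, 2_*$ are all strictly greater than $2$.

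For $(I_1)$, this is immediate from the definition of $I_{\lambda,\mu}$, since each term vanishes at $u = 0$.

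For $(I_2)$, the idea is to bound the three negative contributions in $I_{\lambda,\mu}$ by powers of the norm using the continuous embeddings mentioned above. Concretely, there exist constants $C_1, C_2, C_3 > 0$, depending only on $N$ and $p$, such that
\[
\|u_+\|_{L^p_K(\mathbb{R}^N_+)}^p \le C_1 \|u\|^p, \qquad
\|u_+\|_{L^{2^*}_K(\mathbb{R}^N_+)}^{2^*} \le C_2 \|u\|^{2^*}, \qquad
\|u_+\|_{L^{2_*}_K(\mathbb{R}^{N-1})}^{2_*} \le C_3 \|u\|^{2_*}.
\]
Plugging these into the definition of $I_{\lambda,\mu}$ yields
\[
I_{\lambda,\mu}(u) \ge \frac{1}{2}\|u\|^2 - \frac{\mu C_1}{p}\|u\|^p - \frac{\lambda C_2}{2^*}\|u\|^{2^*} - \frac{\sqrt{\lambda} C_3}{2_*}\|u\|^{2_*}.
\]
Since $\min\{p, 2^*, 2_*\} > 2$, the quadratic term dominates for small $\|u\|$, so choosing $\rho > 0$ sufficiently small produces $\alpha = \alpha(\rho) > 0$ with $I_{\lambda,\mu}(u) \ge \alpha$ whenever $\|u\| = \rho$. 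This step is essentially routine once the embedding constants are named.

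For $(I_3)$, I fix any $u_0 \in X$ with $u_0 \ge 0$ and $u_0 \not\equiv 0$ (for instance, a positive cut-off of $\varphi_1$), and study
\[
I_{\lambda,\mu}(t u_0) = \frac{t^2}{2}\|u_0\|^2 - \frac{\mu t^p}{p}\|u_0\|^p_{L^p_K(\mathbb{R}^N_+)} - \frac{\lambda t^{2^*}}{2^*}\|u_0\|^{2^*}_{L^{2^*}_K(\mathbb{R}^N_+)} - \frac{\sqrt{\lambda}\, t^{2_*}}{2_*}\|u_0\|^{2_*}_{L^{2_*}_K(\mathbb{R}^{N-1})}.
\]
Because $\mu, \lambda > 0$ and $\min\{p,2^*,2_*\} > 2$, the right-hand side tends to $-\infty$ as $t \to +\infty$; in particular, one can pick $t_0$ so large that $\|t_0 u_0\| > \rho$ and $I_{\lambda,\mu}(t_0 u_0) < 0$, and then set $e := t_0 u_0$.

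I do not expect any genuine obstacle here; the only point requiring minor care is $(I_2)$, where one should verify that the three exponents $p$, $2^*$ and $2_*$ are indeed all $> 2$ (which is built into the standing hypotheses) so that the quadratic part wins for small $\|u\|$. The rest is an immediate application of the continuous embeddings recorded earlier in the paper.
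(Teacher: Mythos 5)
Your proposal is correct and follows essentially the same route as the paper: item $(I_2)$ via the continuous embeddings $X\hookrightarrow L^p_K(\mathbb{R}^N_+)$, $X\hookrightarrow L^{2^*}_K(\mathbb{R}^N_+)$, $X\hookrightarrow L^{2_*}_K(\mathbb{R}^{N-1})$ and the fact that $p,2^*,2_*>2$, and item $(I_3)$ by letting $t\to+\infty$ along a ray $tu_0$ with $\|(u_0)_+\|_{L^{2^*}_K(\mathbb{R}^N_+)}\neq 0$. The paper merely makes the choice of $\rho$ and $\alpha$ explicit (via $\theta=\min\{p,2_*\}$), which your argument implicitly contains.
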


\begin{proof}
Clearly, the item $(I_1)$ holds.

As for item $(I_2)$, we derive from the embeddings $X \hookrightarrow {L^{p}_{K}}({\mathbb{R}}^{N}_{+})$, $X \hookrightarrow {L^{2^{*}}_{K}}({\mathbb{R}}^{N}_{+})$ and $X \hookrightarrow {L^{2_{*}}_{K}}({\mathbb{R}}^{N-1})$ that there exist constants $C_{1}$, $C_{2}$, $C_{3}>0$ such that for any $u\in X$,
\begin{equation*}
I_{\lambda, \mu}(u)
\geq {\frac{1}{2}{\|{u}\|}^{2}}-\mu C_{1} {\|{u}\|}^{p}-\lambda C_{2} {\|{u}\|}^{2^{*}}-\sqrt{\lambda} C_{3} {\|{u}\|}^{2_{*}}.
\end{equation*}
Setting $\theta:=\min\{p,2_*\}>2$ and $C_4:=\mu C_{1}+\lambda C_{2}+\sqrt{\lambda} C_{3}>0$, we obtain that for any $u\in X$ with $\|{u}\|\leq1$,
\begin{equation*}
I_{\lambda, \mu}(u)
\geq {\|{u}\|}^{\theta}\Big(\frac{1}{2} {\|{u}\|}^{2-\theta}-C_{4}\Big).
\end{equation*}
Therefore, the item $(I_2)$ holds for $\rho=\min\{(2+2C_{4})^\frac{1}{2-\theta},1\}$ and $\alpha=\rho^\theta>0.$

Now we are going to check item $(I_3)$. For any $u\in X$ with ${\|{u_{+}}\|}_{L^{{2}^{*}}_{K}({\mathbb{R}}^{N}_{+})}\neq 0$, $t\geq 0$, we have
\begin{equation*}
I_{\lambda, \mu}(tu)={\frac{t^{2}}{2}}{\|{u}\|}^{2}-{\frac{\mu{t^{p}}}{p}} {\|{u_{+}}\|}_{{L_{K}^{p}}({\mathbb{R}}^{N}_{+})}^{p}-{\frac{\lambda t^{2^{*}} }{2^{*}}}{\|{u_{+}}\|}_{L^{{2}^{*}}_{K}({\mathbb{R}}^{N}_{+})}^{2^{*}}-{\frac{\sqrt{\lambda}t^{2_{*}}} {2_{*}}}{\|{u_{+}}\|}_{L^{{2}_{*}}_{K}({\mathbb{R}}^{N-1})}^{2_{*}}.
\end{equation*}
Since $\lambda>0$, we obtained that $I_{\lambda, \mu}(tu)\rightarrow -\infty$ as $t\rightarrow +\infty$. Hence, we can set $e:=tu$ with $t>0$ large enough to get item $(I_3)$.
\end{proof}

Define
\begin{equation}\label{2.7}
c_{\lambda, \mu}:={\inf\limits_{\gamma \in \Gamma}}{\max\limits_{t\in[0,1]}}\ I_{\lambda, \mu}(\gamma(t)),
\end{equation}
where
\begin{equation}
\Gamma:=\big\{\gamma \in C([0,1],X):\gamma(0)=0, I_{\lambda, \mu}(\gamma(1))<0\big\}.\notag
\end{equation}
As a consequence of Lemma \ref{lem2.3}, we easily get $c_{\lambda, \mu}>0$. Next, we verify the level value $c_{\lambda, \mu}$ is in an interval where the $(PS)$ condition holds.

\begin{Lem}\label{lem2.5}
%Suppose that $N\geq3$, $p\in (2,2^{*})$, $\lambda>0$ and $\mu>0$. Then
The functional $I_{\lambda, \mu}(u)$ satisfies the $(PS)_{c}$ condition at the level $c_{\lambda, \mu}$ if
 \begin{equation}
c_{\lambda, \mu}<A_\lambda,   \quad\\\label{2.8}
\end{equation}
where $A_\lambda$ and $c_{\lambda, \mu}$ are given by \eqref{2.5}, \eqref{2.7}, respectively.
\end{Lem}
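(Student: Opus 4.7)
The plan is to show any $(PS)_{c_{\lambda,\mu}}$ sequence $\{u_n\}\subset X$ admits a strongly convergent subsequence under the hypothesis $c_{\lambda,\mu}<A_\lambda$. First I would establish that $\{u_n\}$ is bounded: fixing $\theta\in(2,\min\{p,2_*\})$ (automatically $\theta<2^*$ since $2_*<2^*$), the estimate
\[I_{\lambda,\mu}(u_n)-\tfrac{1}{\theta}\langle I'_{\lambda,\mu}(u_n),u_n\rangle\geq\bigl(\tfrac{1}{2}-\tfrac{1}{\theta}\bigr)\|u_n\|^{2}\]
(in which the coefficients of all three nonlinear terms are nonnegative for such $\theta$) combined with $I_{\lambda,\mu}(u_n)=c_{\lambda,\mu}+o(1)$ and $\|I'_{\lambda,\mu}(u_n)\|_{X^{*}}\to 0$ yields boundedness. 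Testing $I'_{\lambda,\mu}(u_n)$ against $(u_n)_-$ and observing that every nonlinear term of $I'_{\lambda,\mu}$ depends only on the positive part gives $\|(u_n)_-\|\to 0$, so replacing $u_n$ by $(u_n)_+$ (still a $(PS)_{c_{\lambda,\mu}}$ sequence with the same weak limit) I may assume $u_n\geq 0$.

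Along a subsequence $u_n\rightharpoonup u$ in $X$ with $u_n\to u$ a.e.\ and strongly in $L^r_K(\mathbb{R}^N_+)$ for $r\in[2,2^*)$ and in $L^q_K(\mathbb{R}^{N-1})$ for $q\in[2,2_*)$. Passing to the limit in $\langle I'_{\lambda,\mu}(u_n),\varphi\rangle\to 0$ (the critical interior and boundary integrals handled by uniform $L^{2^*}_K$, $L^{2_*}_K$ bounds plus a.e.\ convergence) shows $I'_{\lambda,\mu}(u)=0$, whence $u\geq 0$ and $I_{\lambda,\mu}(u)=I_{\lambda,\mu}(u)-\tfrac{1}{\theta}\langle I'_{\lambda,\mu}(u),u\rangle\geq 0$. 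Setting $v_n:=u_n-u$, weak convergence gives $\|u_n\|^2=\|v_n\|^2+\|u\|^2+o(1)$, while the Brezis--Lieb lemma in the weighted interior and boundary Lebesgue spaces supplies the analogous splittings of $\|u_n\|^{2^*}_{L^{2^*}_K}$ and $\|u_n\|^{2_*}_{L^{2_*}_K}$. Inserting these into $I_{\lambda,\mu}(u_n)\to c_{\lambda,\mu}$ and $\langle I'_{\lambda,\mu}(u_n),u_n\rangle\to 0$, subtracting the corresponding identities for $u$, and using the subcritical compactness $\|v_n\|_{L^p_K}\to 0$, I would obtain
\[\tfrac{1}{2}\|v_n\|^{2}-\tfrac{\lambda}{2^{*}}\|v_n\|^{2^{*}}_{L^{2^{*}}_{K}}-\tfrac{\sqrt{\lambda}}{2_{*}}\|v_n\|^{2_{*}}_{L^{2_{*}}_{K}}=c_{\lambda,\mu}-I_{\lambda,\mu}(u)+o(1),\]
\[\|v_n\|^{2}-\lambda\|v_n\|^{2^{*}}_{L^{2^{*}}_{K}}-\sqrt{\lambda}\|v_n\|^{2_{*}}_{L^{2_{*}}_{K}}=o(1).\]

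The crux is ruling out $b:=\lim\|v_n\|^{2}>0$. I would introduce the fibering map
\[g_n(t):=\tfrac{t^{2}}{2}\|v_n\|^{2}-\tfrac{\lambda t^{2^{*}}}{2^{*}}\|v_n\|^{2^{*}}_{L^{2^{*}}_{K}}-\tfrac{\sqrt{\lambda}\,t^{2_{*}}}{2_{*}}\|v_n\|^{2_{*}}_{L^{2_{*}}_{K}},\]
i.e., the $v_n$-fibering of the ``free'' ($\mu=0$) functional in the weighted setting. The second identity above reads $g_n'(1)=o(1)$; a short analysis of the two-exponent equation $g_n'(t)=0$ shows its unique positive maximizer $t_n$ satisfies $t_n\to 1$, whence $\sup_{t>0}g_n(t)=g_n(1)+o(1)=c_{\lambda,\mu}-I_{\lambda,\mu}(u)+o(1)$. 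On the other hand, Lemma \ref{lem2.2} together with the identification $S^K_\theta=S_\theta$ of Lemma \ref{lem2.4} allows the Mountain Pass characterization of $A_\lambda$ to be transferred from $D^{1,2}(\mathbb{R}^N_+)$ to the weighted space $X$, giving $\sup_{t>0}g_n(t)\geq A_\lambda$ for every $n$ with $v_n\neq 0$. Passing to the limit yields $c_{\lambda,\mu}\geq A_\lambda+I_{\lambda,\mu}(u)\geq A_\lambda$, contradicting the hypothesis. Therefore $b=0$ and $u_n\to u$ in $X$. The main obstacle is this concluding step: because $2^{*}\neq 2_{*}$ the map $g_n$ has two distinct critical exponents, so the claim $t_n\to 1$ requires care, and transferring the Mountain Pass characterization of $A_\lambda$ from $D^{1,2}(\mathbb{R}^N_+)$ to $X$ depends crucially on $S^K_\theta=S_\theta$.
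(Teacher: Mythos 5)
Your proposal follows the same architecture as the paper's proof: boundedness of the $(PS)$ sequence, identification of the weak limit as a critical point with $I_{\lambda,\mu}(u)\geq 0$, Brezis--Lieb splitting for the interior and trace critical terms, and a contradiction obtained by showing that the ``free'' energy of the remainder $v_n=u_n-u$ must reach the level $A_\lambda$ if $\|v_n\|\not\to 0$. Your streamlinings (a single $\theta\in(2,\min\{p,2_*\})$ instead of the paper's two cases $p\le 2_*$ and $p>2_*$; the reduction to $u_n\geq 0$ via testing with $(u_n)_-$) are correct but inessential. The step $t_n\to 1$ and $\sup_{t>0}g_n(t)=g_n(1)+o(1)$, which you flag, is indeed fine: the maximizer solves $a_n=b_nt^{2^*-2}+c_nt^{2_*-2}$ with the right-hand side strictly increasing in $t$, and $a_n-b_n-c_n=o(1)$ with $a_n\geq\beta^2$ forces $t_n\to1$.

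The genuine gap is the step you yourself identify as the crux and then dispose of in one sentence: the inequality $\sup_{t>0}g_n(t)\geq A_\lambda$ for $v_n\in X\setminus\{0\}$. This does \emph{not} follow by simply ``transferring the Mountain Pass characterization'' through $S^K_\theta=S_\theta$, because $\sup_{t>0}\hat I_{\lambda,\mu}(tv)$ is not a single Sobolev quotient: it is a function of the triple $\bigl(\|v\|^2,\|v_+\|^{2^*}_{L^{2^*}_K},\|v_+\|^{2_*}_{L^{2_*}_K}\bigr)$, and no single value of $\theta$ controls all three quantities simultaneously. The paper's proof of its claim \eqref{2.13} is precisely the missing argument: one normalizes $v_n$ so that $\|\nabla v_n\|_{L^2_K}=\|\nabla\varphi_{\varepsilon,0}\|_{L^2}$ and then distinguishes two cases according to whether the ratio $\|{(v_n)_+}\|^2_{L^{2^*}_K}/\|{(v_n)_+}\|^2_{L^{2_*}_K}$ is at least, or strictly less than, the corresponding ratio for $\varphi_{\varepsilon,0}$. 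In the first case $S^K_1=S_1$ forces both critical norms of $v_n$ to be dominated by those of $\varphi_{\varepsilon,0}$, whence $\sup_t\hat I_{\lambda,\mu}(tv_n)\geq\sup_t\Phi_\lambda(t\varphi_{\varepsilon,0})\geq A_\lambda$. In the second case one selects $\tau_0>0$ so that the ratio for $v_n$ equals that of $\varphi_{\varepsilon,\tau_0}$ (possible since the ratio for $\varphi_{\varepsilon,\tau}$ tends to $0$ as $\tau\to\infty$), applies $S^K_{\theta_0}=S_{\theta_0}$ for the particular $\theta_0$ attached to $\varphi_{\varepsilon,\tau_0}$ to compare the gradient norms through the scaling factor $k$ of \eqref{2.117}--\eqref{2.119}, and then compares the two maximizing times $t_1,t_2$ to conclude $\sup_t\hat I_{\lambda,\mu}(tv_n)\geq\sup_t\Phi_\lambda(t\varphi_{\varepsilon,\tau_0})\geq A_\lambda$. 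So the ingredients you cite (Lemma \ref{lem2.2}, Lemma \ref{lem2.4}, and the family $\varphi_{\varepsilon,\tau}$) are the right ones, but the case analysis and the ratio-matching device that combine them constitute the actual content of the lemma and must be supplied.
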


\begin{proof}
From Theorem 2 in \cite{12} and Lemma \ref{lem2.3}, there exists a $(PS)_{c}$ sequence $\{u_{n}\}\subset X$ of $I_{\lambda, \mu}$ with $c=c_{\lambda, \mu}\in (0, A_{\lambda})$, that is,  for any ${\psi}\in X$,
\begin{equation}\label{2.08}
	\begin{aligned}
		I_{\lambda, \mu}({u_{n}})
		&={\frac{1}{2}}{\|{u_{n}}\|}^{2}-{\frac{\mu }{p}}{\|{(u_{n})_{+}}\|}_{{L_{K}^{p}}({\mathbb{R}}^{N}_{+})}^{p}-{\frac{\lambda }{2^{*}}}{\|(u_{n})_{+}\|}_{L^{{2}^{*}}_{K}({\mathbb{R}}^{N}_{+})}^{2^{*}}-{\frac{\sqrt{\lambda} }{2_{*}}}{\|(u _{n})_{+}\|}_{L^{{2}_{*}}_{K}({\mathbb{R}}^{N-1})}^{2_{*}} \\
		&=c_{\lambda, \mu}+{o_n}(1)
	\end{aligned}
\end{equation}
and
\begin{equation}\label{2.9}
\begin{aligned}
\langle {I'_{\lambda, \mu}(u_{n}),{\psi}}\rangle
&=\int_{\mathbb{R}^{N}_{+}}\big( K(x){\nabla{u_{n}}}{\nabla{{\psi}}}-\mu K(x)(u_{n})_{+}^{p-1}{\psi}-\lambda K(x)(u_{n})_{+}^{2^{*}-1}{\psi}\big)dx\\
&\ \ \ \ -\sqrt{\lambda}{\int_{\mathbb{R}^{N-1}}K(x{'},0)(u_{n})_{+}^{2_{*}-1}{\psi}dx{'}} =o_{n}(1)\|\psi\|.
\end{aligned}
\end{equation}
If $p\in (2,2_{*}]$, taking ${\psi}=u_{n}$, we infer from \eqref{2.08} and \eqref{2.9} that
\begin{equation}\label{q2}
\begin{aligned}
c_{\lambda, \mu}+{o_n}(1)+o_n(1)\|u_{n}\|
&=I_{\lambda, \mu}(u_{n})-{\frac{1 }{p}}\langle {I'_{\lambda, \mu}(u_{n}),{u_{n}}}\rangle \\
&=\Big(\frac{1}{2}-\frac{1}{p}\Big){\|{u_{n}}\|}^{2}+\lambda\Big(\frac{1 }{p}-\frac{1 }{2^{*}}\Big){\|(u_{n})_{+}\|}_{L^{{2}^{*}}_{K}({\mathbb{R}}^{N}_{+})}^{2^{*}} \\
& \ \ \ \ + \sqrt{\lambda}\Big({{\frac{1 }{p}}-{\frac{1 }{2_{*}}}}\Big) \|(u_{n})_{+}\|_{L^{{2}_{*}}_{K}({\mathbb{R}}^{N-1})}^{2_{*}} \\
&\geq\Big({{\frac{1}{2}}-{\frac{1}{p}}}\Big)\|{u_{n}}\|^{2}.
\end{aligned}
\end{equation}
%If $N\geq3, p\in (2_{*},2^{*}), \lambda>0$ and $\mu>0$,
If $p\in (2_{*},2^{*})$, %in Lemma \ref{lem2.3},
we get
\begin{equation}\label{q3}
\begin{aligned}
c_{\lambda, \mu}+o_n(1)+o_n(1)\|u_{n}\|&=I_{\lambda, \mu}(u_{n})-{\frac{1 }{2_{*}}}\langle {I'_{\lambda, \mu}(u_{n}),{u_{n}}}\rangle \\
&=\Big({{\frac{1}{2}}-{\frac{1}{{2_{*}}}}}\Big){\|{u_{n}}\|}^{2}+\mu\Big({{\frac{1 }{{2_{*}}}}-{\frac{1 }{p}}}\Big){\|(u_{n})_{+}\|}_{L^{p}_{K}({\mathbb{R}}^{N}_{+})}^{p}\\
& \ \ \ \ +
{\lambda}\Big({{\frac{1 }{{2_{*}}}}-{\frac{1 }{2^{*}}}}\Big){\|(u_{n})_{+}\|}_{L^{{2}^{*}}_{K}({\mathbb{R}}^{N}_{+})}^{2^{*}}\\
&\geq\Big({{\frac{1}{2}}-{\frac{1}{{2_{*}}}}}\Big){\|{u_{n}}\|}^{2}.
\end{aligned}
\end{equation}
It follows from \eqref{q2} and \eqref{q3} that $\{u_{n}\}$ is bounded in $X$. Up to a subsequence if necessary, we may assume %, as $n \to \infty$,
that for some $u\in X$,
	\begin{equation*}
		\begin{aligned}
			u_n &\rightharpoonup u \quad\quad \quad  \mbox{weakly in} ~ X,\\
			u_n &\rightarrow u\quad  \hspace{0.78cm} \mbox{strongly in} ~ L_{K}^{p} ({\mathbb{R}}^{N}_{+}),~2< p<2^*,\\
            u_n^{2^{*}-1} &\rightharpoonup u^{2^{*}-1} \quad \ \mbox{weakly in} ~ {L^{{\frac{{2}^{*}}{{2}^{*}-1}}}_{K}({\mathbb{R}}^{N}_{+})},\\
             u_n^{2_{*}-1} &\rightharpoonup u^{2_{*}-1}\quad  \
             \mbox{weakly in} ~ {L^{\frac{{2}_{*}}{{2}_{*}-1}}_{K}({\mathbb{R}}^{N-1})}.
		\end{aligned}
	\end{equation*}
Passing to the limit as $n\rightarrow \infty$ in \eqref{2.9}, we deduce that for any ${\psi}\in X$,
\begin{equation*}
\langle {I'_{\lambda, \mu}(u), {\psi}}\rangle =0.
\end{equation*}
Therefore, $u$ is a critical point of $I_{\lambda, \mu}$ and
\begin{equation*}\label{2.009}
	\begin{aligned}
I_{\lambda, \mu}({u})
&=\mu\Big(\frac{1 }{2}-{\frac{1 }{p}}\Big){\|u_{+}\|}_{L^{p}_{K}({\mathbb{R}}^{N}_{+})}^{p}
+\lambda\Big(\frac{1 }{2}-{\frac{1}{2^{*}}}\Big){\|u_{+}\|}_{L^{{2}^{*}}_{K}({\mathbb{R}}^{N}_{+})}^{2^{*}}\\
&\quad+\sqrt{\lambda}\Big(\frac{1 }{2}-{\frac{1}{2_{*}}}\Big){\|u_{+}\|}_{L^{{2}_{*}}_{K}({\mathbb{R}}^{N-1})}^{2_{*}}\geq 0.
\end{aligned}
\end{equation*}

Set $z_{n}:=u_{n}-u$. From Brezis-Lieb Lemma \cite{13}, we have
\begin{equation*}
\begin{aligned}
{\|{(u_{n})_{+}}\|}_{L^{{2}^{*}}_{K}({\mathbb{R}}^{N}_{+})}^{2^{*}}
&= {\|{(z_{n})_{+}}\|}_{L^{{2}^{*}}_{K}({\mathbb{R}}^{N}_{+})}^{2^{*}}+
{\|{u_+}\|}_{L^{{2}^{*}}_{K}({\mathbb{R}}^{N}_{+})}^{2^{*}}+o_n(1),\\
{\|{(u_{n})_{+}}\|}_{L^{{2}_{*}}_{K}({\mathbb{R}}^{N-1})}^{2_{*}}
&= {\|{(z_{n})_{+}}\|}_{L^{{2}_{*}}_{K}({\mathbb{R}}^{N-1})}^{2_{*}}+
{\|{u_+}\|}_{L^{{2}_{*}}_{K}({\mathbb{R}}^{N-1})}^{2_{*}}+o_n(1).
\end{aligned}
\end{equation*}
Clearly,
\begin{equation*}
{\|u_{n}\|}^{2}={\|{z_{n}}\|}^{2}+{\|{u}\|}^{2}+o_n(1).\notag
\end{equation*}
Thus, we obtain from \eqref{2.08} and \eqref{2.9} that
\begin{equation}\label{2.010}
\begin{aligned}
I_{\lambda, \mu}({u})+{\frac{1}{2}}{\|{z_{n}}\|}^{2}-{\frac{\lambda }{2^{*}}}{\|(z_{n})_{+}\|}_{L^{{2}^{*}}_{K}({\mathbb{R}}^{N}_{+})}^{2^{*}}-{\frac{\sqrt{\lambda} }{2_{*}}}{\|(z _{n})_{+}\|}_{L^{{2}_{*}}_{K}({\mathbb{R}}^{N-1})}^{2_{*}}
=c_{\lambda, \mu}+o_n(1)
\end{aligned}
\end{equation}
and
\begin{equation}\label{2.011}
{\|{z_{n}}\|}^{2}-\lambda{\|(z_{n})_{+}\|}_{L^{{2}^{*}}_{K}({\mathbb{R}}^{N}_{+})}^{2^{*}}-\sqrt{\lambda}{\|(z _{n})_{+}\|}_{L^{{2}_{*}}_{K}({\mathbb{R}}^{N-1})}^{2_{*}}=o_n(1).
\end{equation}

Next, we show that there exists a subsequence of $\{z_{n}\}$, still denoted by $\{z_{n}\}$ such that ${\|{z_{n}}\|}\rightarrow 0$ as $n\rightarrow \infty$. By contradiction, we assume that there exists $\beta>0$ such that ${\|{z_{n}}\|}\geq \beta>0$ for any $n\in \mathbb{N}$. Let
\begin{equation*}
\Hat I_{\lambda, \mu }(u):={\frac{1}{2}}{\|{u}\|}^{2}-{\frac{\lambda }{2^{*}}}{\|{u_{+}}\|}_{L^{{2}^{*}}_{K}({\mathbb{R}}^{N}_{+})}^{2^{*}}-{\frac{\sqrt{\lambda} }{2_{*}}}{\|{u_{+}}\|}_{L^{{2}_{*}}_{K}({\mathbb{R}}^{N-1})}^{2_{*}}  .\notag
\end{equation*}
We claim that
\begin{equation}\label{2.13}
\sup\limits_{t>0}\Hat I_{\lambda, \mu }(tz_{n})\geq A_{\lambda}-\epsilon, ~\text{for}~n~\text{large enough},
\end{equation}
where $\epsilon$ is a small positive constant.

In fact,
 suppose on the contrary that there exists $n_{0} \in \mathbb{N}$ so that $\sup\limits_{t>0}\Hat I_{\lambda, \mu }({tz_{n}})< A_{\lambda}-\epsilon$ for $n\geq n_{0}$. Since for any $0<b<\infty$,
\begin{equation*}
\sup\limits_{t>0}\Hat I_{\lambda, \mu }(tz_{n})=\sup\limits_{t>0}\Hat I_{\lambda, \mu }(tbz_{n}),\notag
\end{equation*}
 there exists $0<b_{0}<\infty$ with
\begin{equation}\label{2.14}
{\|\nabla v_{n}\|_{{L_{K}^{2}}(\mathbb{R}^{N}_{+})}}={\|\nabla{{\varphi}_{{\varepsilon},{0}}}\|_{{L^{2}}(\mathbb{R}^{N}_{+})}},
\end{equation}
such that $\sup\limits_{t>0}\Hat I_{\lambda, \mu }(tv_{n})< A_\lambda-\epsilon$, where $v_{n}=b_{0}{z_{n}}$.

If one has
\begin{equation}\label{2.115}
\frac{{\|(v_{n})_{+}\|_{L^{{2}^{*}}_{K}({\mathbb{R}}^{N}_{+})}^{2}}}{{\|(v_{n})_{+}\|_{L^{{2}_{*}}_{K}({\mathbb{R}}^{N-1})}^{2}}}\geq \frac{{\|{{{\varphi}_{{\varepsilon},{0}}}}\|_{L^{{2}^{*}}({\mathbb{R}}^{N}_{+})}^{2}}}{{\|{{{\varphi}_{{\varepsilon},{0}}}}\|_{L^{{2}_{*}}({\mathbb{R}}^{N-1})}^{2}}},
\end{equation}
by  Lemma \ref{lem2.4} and the fact that ${{{\varphi}_{{\varepsilon},{0}}}}$ reaches the infimum  $S_{1}$ in \eqref{2.1}, we have that
\begin{equation}\label{15}
\frac{{\|\nabla{{v_{n}}}\|_{{L_{K}^{2}}(\mathbb{R}^{N}_{+})}^{2}}}{{\|{{v_{n}}}\|_{L^{{2}^{*}}_{K}({\mathbb{R}}^{N}_{+})}^{2}}}\geq
\frac{{\|\nabla{{{{\varphi}_{{\varepsilon},{0}}}}}\|_{{L^{2}}(\mathbb{R}^{N}_{+})}^{2}}}{{\|{{{{\varphi}_{{\varepsilon},{0}}}}}\|_{L^{{2}^{*}}({\mathbb{R}}^{N}_{+})}^{2}}}.
\end{equation}
From \eqref{2.14}-\eqref{15}, we get
$${{\|{{{{\varphi}_{{\varepsilon},{0}}}}}\|_{L^{{2}^{*}}({\mathbb{R}}^{N}_{+})}^{2}}}\geq{{\|{{v_{n}}}\|_{L^{{2}^{*}}_{K}({\mathbb{R}}^{N}_{+})}^{2}}}\geq {{\|(v_{n})_{+}\|_{L^{{2}^{*}}_{K}({\mathbb{R}}^{N}_{+})}^{2}}}$$
and
$${{\|{{{{\varphi}_{{\varepsilon},{0}}}}}\|_{L^{{2}_{*}}({\mathbb{R}}^{N-1})}^{2}}}\geq{{\|({v_{n}})_{+}\|_{L^{{2}_{*}}_{K}({\mathbb{R}}^{N-1})}^{2}}}.$$
Hence, there holds
\begin{equation*}
\sup\limits_{t>0}\Hat I_{\lambda, \mu }(tv_{n})\geq
{\sup\limits_{t>0}}\Phi_\lambda(t{{{{\varphi}_{{\varepsilon},{0}}}}})\geq A_\lambda,
\end{equation*}
which contradicts with $\sup\limits_{t>0}\Hat I_{\lambda, \mu }(tv_{n})< A_\lambda-\epsilon$.

Thus, \eqref{2.115} is not true, i.e.,
\begin{equation*}
\frac{{\|(v_{n})_{+}\|_{L^{{2}^{*}}_{K}({\mathbb{R}}^{N}_{+})}^{2}}}{{\|(v_{n})_{+}\|_{L^{{2}_{*}}_{K}({\mathbb{R}}^{N-1})}^{2}}}< \frac{{\|{{{\varphi}_{{\varepsilon},{0}}}}\|_{L^{{2}^{*}}({\mathbb{R}}^{N}_{+})}^{2}}}{{\|{{{\varphi}_{{\varepsilon},{0}}}}\|_{L^{{2}_{*}}({\mathbb{R}}^{N-1})}^{2}}}.
\end{equation*}
Moreover, %by \eqref{2.2}
\begin{equation*}
\frac{{\|{{{\varphi}_{{\varepsilon},{{\tau}}}}}\|_{L^{{2}^{*}}({\mathbb{R}}^{N}_{+})}^{2}}}{{\|{{{\varphi}_{{\varepsilon},{{\tau}}}}}\|_{L^{{2}_{*}}({\mathbb{R}}^{N-1})}^{2}}}\rightarrow 0 ~\text{as}~ {\tau}\rightarrow \infty.\notag
\end{equation*}
Therefore, there must be some ${{\tau}_{0}}>0$ such that
\begin{equation*}%\label{2.16}
\frac{{\|(v_{n})_{+}\|_{L^{{2}^{*}}_{K}({\mathbb{R}}^{N}_{+})}^{2}}}{{\|(v_{n})_{+}\|_{L^{{2}_{*}}_{K}({\mathbb{R}}^{N-1})}^{2}}}= \frac{{\|{{{\varphi}_{{\varepsilon},{{{\tau}_{0}}}}}}\|_{L^{{2}^{*}}({\mathbb{R}}^{N}_{+})}^{2}}}{{\|{{{\varphi}_{{\varepsilon},{{{\tau}_{0}}}}}}\|_{L^{{2}_{*}}({\mathbb{R}}^{N-1})}^{2}}}.
\end{equation*}
Let
\begin{equation}\label{2.117}
k:=\frac{{\|(v_{n})_{+}\|_{L^{{2}^{*}}_{K}({\mathbb{R}}^{N}_{+})}}}{{\|{{{\varphi}_{{\varepsilon},{{{\tau}_{0}}}}}}\|_{L^{{2}^{*}}({\mathbb{R}}^{N}_{+})}}}>0,
\end{equation}
then
\begin{equation}\label{2.118}
k=\frac{{\|(v_{n})_{+}\|_{L^{{2}_{*}}_{K}({\mathbb{R}}^{N-1})}}}{{\|{{{\varphi}_{{\varepsilon},{{{\tau}_{0}}}}}}\|_{L^{{2}_{*}}({\mathbb{R}}^{N-1})}}}
\end{equation}
and
\begin{equation*}
\frac{{\|v_{n}\|_{L^{{2}^{*}}_{K}({\mathbb{R}}^{N}_{+})}}}{{\|{{{\varphi}_{{\varepsilon},{{{\tau}_{0}}}}}}\|_{L^{{2}^{*}}({\mathbb{R}}^{N}_{+})}}}\geq k,~\frac{{\|v_{n}\|_{L^{{2}_{*}}_{K}({\mathbb{R}}^{N-1})}}}{{\|{{{\varphi}_{{\varepsilon},{{{\tau}_{0}}}}}}\|_{L^{{2}_{*}}({\mathbb{R}}^{N-1})}}}\ \geq k.\notag
\end{equation*}
By Lemma \ref{lem2.4} and the fact that $\varphi_{\varepsilon, \tau_{0}}$ reaches the infimum  $S_{{\theta}_{0}}$ in \eqref{2.1}, we have
\begin{equation*}
\frac{{\|\nabla{{v_{n}}}\|_{{L_{K}^{2}}(\mathbb{R}^{N}_{+})}^{2}}}{{{\theta}_{0}} {{\|{{v_{n}}}\|_{L^{{2}^{*}}_{K}({\mathbb{R}}^{N}_{+})}^{2}}}+(1-{{\theta}_{0}}){{\|{{v_{n}}}\|_{L^{{2}_{*}}_{K}({\mathbb{R}}^{N-1})}^{2}}}}\geq \frac{{\|\nabla{{{{\varphi}_{{\varepsilon},{{{\tau}_{0}}}}}}}\|_{{L^{2}}(\mathbb{R}^{N}_{+})}^{2}}}{{{\theta}_{0}} {{\|{{{{\varphi}_{{\varepsilon},{{{\tau}_{0}}}}}}}\|_{L^{{2}^{*}}({\mathbb{R}}^{N}_{+})}^{2}}}+(1-{{\theta}_{0}}){{\|{{{{\varphi}_{{\varepsilon},{{{\tau}_{0}}}}}}}\|_{L^{{2}_{*}}({\mathbb{R}}^{N-1})}^{2}}}}.
\end{equation*}
It follows that
\begin{equation}\label{2.119}
\frac{{\|\nabla{{v_{n}}}\|_{{L_{K}^{2}}(\mathbb{R}^{N}_{+})}^{2}}}{{\|\nabla{{{{\varphi}_{{\varepsilon},{{{\tau}_{0}}}}}}}\|_{{L^{2}}(\mathbb{R}^{N}_{+})}^{2}}}\geq k^{2}.
\end{equation}
On the one hand,
\begin{equation*}
\sup\limits_{t>0}\Hat I_{\lambda, \mu }(tv_{n})
=\lambda\Big({\frac{1}{2}}-{\frac{1 }{2^{*}}}\Big){t_{1}^{2^{*}}}{\|{(v_{n})_{+}}\|}_{L^{{2}^{*}}_{K}({\mathbb{R}}^{N}_{+})}^{2^{*}}+\sqrt{\lambda}\Big({\frac{1}{2}}-\frac{1 }{2_{*}}\Big){t_{1}^{2_{*}}}{\|{(v_{n})_{+}}\|_{L^{{2}_{*}}_{K}({\mathbb{R}}^{N-1})}^{2_{*}}}  ,\notag
\end{equation*}
where $t_{1}>0$ satisfies
\begin{equation*}
\lambda t_{1}^{2^{*}-2}{{\|{(v_{n})_{+}}\|_{L^{{2}^{*}}_{K}({\mathbb{R}}^{N}_{+})}^{2^{*}}}}+
\sqrt{\lambda}t_{1}^{2_{*}-2}{{\|{(v_{n})_{+}}\|_{L^{{2}_{*}}_{K}({\mathbb{R}}^{N-1})}^{2_{*}}}}-{{\|\nabla{{v_{n}}}\|_{{L_{K}^{2}}
(\mathbb{R}^{N}_{+})}^{2}}}=0.\notag
\end{equation*}
Thus for $0<t\leq t_{1}$, we have
\begin{equation}\label{2.220}
\lambda t^{2^{*}-2}{{\|{(v_{n})_{+}}\|_{L^{{2}^{*}}_{K}({\mathbb{R}}^{N}_{+})}^{2^{*}}}}+\sqrt{\lambda}t^{2_{*}-2}{{\|{(v_{n})_{+}}\|_{L^{{2}_{*}}_{K}({\mathbb{R}}^{N-1})}^{2_{*}}}}\leq {{\|\nabla{{v_{n}}}\|_{{L_{K}^{2}}(\mathbb{R}^{N}_{+})}^{2}}}.
\end{equation}
On the other hand, by \eqref{2.117} and \eqref{2.118}, we have
\begin{equation}\label{2.221}
	\begin{aligned}
		\sup\limits_{t>0}\Phi_{\lambda}(t{{{\varphi_{\varepsilon, \tau_{0}}}}})
		&=\lambda\Big({\frac{1}{2}}-{\frac{1 }{2^{*}}}\Big){t_{2}^{2^{*}}}{\|{\varphi_{\varepsilon,{{{{\tau}_{0}}}}}}\|}_{L^{{2}^{*}}({\mathbb{R}}^{N}_{+})}^{2^{*}}
		+\sqrt{\lambda}\Big({\frac{1}{2}}-{\frac{1 }{2_{*}}}\Big){t_{2}^{2_{*}}}{\|{{{\varphi_{{\varepsilon}, \tau_{0}}}}}\|}_{L^{{2}_{*}}(\mathbb{R}^{N-1})}^{2_{*}}\\
		&=\lambda\Big({\frac{1}{2}}-{\frac{1 }{2^{*}}}\Big)\Big(\frac{t_{2}}{k}\Big)^{2^{*}}{\|{(v_{n})_{+}}\|}_{L^{{2}^{*}}_{K}({\mathbb{R}}^{N}_{+})}^{2^{*}}
		+\sqrt{\lambda}\Big({\frac{1}{2}}-{\frac{1 }{2_{*}}}\Big)(\frac{t_{2}}{k})^{2_{*}}{\|{(v_{n})_{+}}\|}_{L^{{2}_{*}}_{K}({\mathbb{R}}^{N-1})}^{2_{*}},
	\end{aligned}
\end{equation}
where $t_{2}>0$ is the root of
\begin{equation*}
\lambda t^{2^{*}-2}{{\|\varphi_{\varepsilon, \tau_{0}}\|_{L^{2^{*}} ({\mathbb{R}}^{N}_{+})}^{2^{*}}}}+\sqrt{\lambda}t^{2_{*}-2}{{\|{{{{\varphi_{\varepsilon,\tau_{0}}}}}}\|_{L^{{2}_{*}}({\mathbb{R}}^{N-1})}^{2_{*}}}}-{{\|\nabla{{{{\varphi}_{{\varepsilon},{{{{\tau}_{0}}}}}}}}\|_{{L^{2}}(\mathbb{R}^{N}_{+})}^{2}}}=0.\notag
\end{equation*}
Using \eqref{2.117}-\eqref{2.119} again, we obtain
\begin{equation*}
\lambda\Big(\frac{t_{2}}{k}\Big)^{2^{*}-2}{{\|{(v_{n})_{+}}\|_{L^{{2}^{*}}_{K}({\mathbb{R}}^{N}_{+})}^{2^{*}}}}+\sqrt{\lambda}\Big(\frac{t_{2}}{k}\Big)^{2_{*}-2}{{\|{(v_{n})_{+}}\|_{L^{{2}_{*}}_{K}({\mathbb{R}}^{N-1})}^{2_{*}}}}\leq{{\|\nabla{{v_{n}}}\|_{{L_{K}^{2}}(\mathbb{R}^{N}_{+})}^{2}}}.
\end{equation*}
From \eqref{2.220}, we get
\begin{equation}\label{20}
\frac{t_{2}}{k}\leq t_{1}.
\end{equation}
Substituting \eqref{20} into \eqref{2.221}, we get
\begin{equation*}
\sup\limits_{t>0}\Phi_\lambda(t{{{{\varphi}_{{\varepsilon},{{{{\tau}_{0}}}}}}}})\leq
\sup\limits_{t>0}\Hat I_{\lambda, \mu }(tv_{n}),\notag
\end{equation*}
which also contradicts with $\sup\limits_{t>0}\Hat I_{\lambda, \mu }(tv_{n})< A_\lambda-\epsilon$. Hence, our claim (\ref {2.13}) holds if $\|z_n\| \ge \beta >0$.

From \eqref{2.011}, we have
\begin{equation}\label{c2.27}
\hat I_{\lambda, \mu }(z_{n})=\sup\limits_{t>0}\hat I_{\lambda, \mu }(tz_{n})+o_{n}(1).
\end{equation}
Combining \eqref{2.010} with \eqref{c2.27} yields that
\begin{equation*}
I_{\lambda, \mu}({u_{n}})=I_{\lambda, \mu}(u)+\sup\limits_{t>0}\hat I_{\lambda, \mu }(tz_{n})+o_{n}(1)=c_{\lambda, \mu}+o_n(1).\notag
\end{equation*}
From \eqref{2.13} and $I_{\lambda, \mu}(u)\geq 0$, we have that $c_{\lambda, \mu}\geq A_\lambda-\epsilon$ provided $n\geq n_{0}$, which contradicts \eqref{2.8} as $\epsilon$ is small enough. Therefore, ${\|{z_{n}}\|}\rightarrow 0$, that is, $I_{\lambda, \mu}(u)$ satisfies the $(PS)_{c}$ condition for $c=c_{\lambda, \mu}\in(0, A_\lambda)$.
\end{proof}

\vs{3mm}
\section{The proof of Theorem \ref{Th1.1}}\label{S3}
\vs{2mm}

In the previous section, we have proved a local compactness result for the functional $I_{\lambda, \mu}(u)$ under the condition \eqref{2.8}. In this section, we devote to verifying condition \eqref{2.8} and then complete the proof of Theorem \ref{Th1.1}.

Set
\begin{equation*}
c_{\lambda, \mu}^{*}:=\inf\limits_{u\in X\setminus\{0\}}\sup\limits_{t>0}I_{\lambda, \mu}(tu),
\end{equation*}
then $c_{\lambda, \mu}\leq c_{\lambda, \mu}^{*}$. Hence, the condition \eqref{2.8} in Lemma \ref{lem2.5} holds if
\begin{equation}\label{3.01}
c_{\lambda, \mu}^{*}<A_{\lambda}=\lambda^{-\frac{N-2}{2}}A,
\end{equation}
where $A_{\lambda}$ and $A$ are given by (\ref{2.5}), \eqref{2000}, respectively.
Let $\phi \in C_0^{\infty}({\mathbb{R}}^{N}_{+},[0,1])$ be a cut-off function such that $\phi \equiv 1$ in $B_{1}(0)\cap {\mathbb{R}}^{N}_{+}$ and $\phi\equiv 0$ in $\overline{{{\mathbb{R}}^{N}_{+}}}\setminus B_{2}(0)$, and define
\begin{eqnarray*}
&&\tilde{\psi}_{\lambda,\varepsilon}(x)
:=K(x)^{-\frac{1}{2}}\phi(x)\psi_{\lambda,\varepsilon}(x),\\
&& \ \ \tilde {U}_{{\varepsilon}}(x):=K(x)^{-\frac{1}{2}}\phi(x)U_{{\varepsilon}}(x),\notag
\end{eqnarray*}
where $\psi_{\lambda,\varepsilon}$ and $U_{{\varepsilon}}$ are defined in Lemma \ref{lem2.2} and \eqref{2.111}.

Noting that $\tilde{\psi}_{\lambda,\varepsilon}=\lambda^{-\frac{N-2}{4}}\tilde{U}_{{\varepsilon}}$, we have
\begin{eqnarray*}
&&{\|\tilde{\psi}_{\lambda,\varepsilon}\|}^{2}=\lambda^{-\frac{N-2}{2}}{\|\tilde{U}_{{\varepsilon}}\|}^{2},\\
&&{\|\tilde{\psi}_{\lambda,\varepsilon}\|}_{L^{p}_{K}({\mathbb{R}}^{N}_{+})}^{p}=\lambda^{-\frac{(N-2)p}{4}}{\|\tilde{U}_{{\varepsilon}}\|}_{L^{p}_{K}({\mathbb{R}}^{N}_{+})}^{p},\\
&&\lambda{\|\tilde{\psi}_{\lambda,\varepsilon}\|}_{L^{{2}^{*}}_{K}({\mathbb{R}}^{N}_{+})}^{2^{*}}=\lambda^{-\frac{N-2}{2}}{\|\tilde{U}_{{\varepsilon}}\|}_{L^{{2}^{*}}_{K}({\mathbb{R}}^{N}_{+})}^{2^{*}},\\
&&\sqrt{\lambda}{\|\tilde{\psi}_{\lambda,\varepsilon}\|}_{L^{{2}_{*}}_{K}({\mathbb{R}}^{N-1})}^{2_{*}}=\lambda^{-\frac{N-2}{2}}{\|\tilde{U}_{{\varepsilon}}\|}_{L^{{2}_{*}}_{K}({\mathbb{R}}^{N-1})}^{2_{*}},
\end{eqnarray*}
where $p\in(2,2^*)$. Thus, one has
\begin{equation*}
{I}_{\lambda, \mu }(\tilde{\psi}_{\lambda,\varepsilon})=\lambda^{-\frac{N-2}{2}}\tilde{I}_{\lambda, \mu }(\tilde{U}_{{\varepsilon}}),
\end{equation*}
where
\begin{equation*}
\tilde{I}_{\lambda, \mu }(u):={\frac{1}{2}}{\|{u}\|}^{2}-\frac{\mu{\lambda^{\frac{(N-2)(2-p)}{4}}      }}{p}{\|{u_+}\|}_{L^{p}_{K}({\mathbb{R}}^{N}_{+})}^{p}-{\frac{1 }{2^{*}}}{\|{u_+}\|}_{L^{{2}^{*}}_{K}({\mathbb{R}}^{N}_{+})}^{2^{*}}
-\frac{1} {2_{*}}{\|{u_+}\|}_{L^{{2}_{*}}_{K}({\mathbb{R}}^{N-1})}^{2_{*}}.
\end{equation*}

In order to verify \eqref{3.01}, it suffices to verify
\begin{equation}\label{0}
\sup\limits_{t>0}\tilde{I}_{\lambda, \mu }(t\tilde{U}_{{\varepsilon}})<A
\end{equation}
under the assumptions of Theorem \ref{Th1.1}.
%We devote the whole of this section to verifying \eqref{0}.
For simplicity, set $k_{N}:=\big(N(N-2)\big)^{\frac{N-2}{4}}$, then
\begin{equation*}
{U}_{\varepsilon}(x)=\frac{{\varepsilon}^{\frac {N-2}{2}}k_{N}}{({\varepsilon}^{2}+|x{'}|^{2}+|x_{N}+{\varepsilon}x_{N}^{0}|^{2})^{\frac{N-2}{2}}},\notag\\
\end{equation*}
where $x_{N}^{0}$ is given by \eqref{2.2}.

%Recall that for any $a,b>0$, the Beta function is defined by
%\begin{equation}\label{3.001}
%B(a,b):=\int_{0}^{\infty}\frac{r^{a-1}}{(r+1)^{a+b}}dr,
%\end{equation}
%and it also can be written as
%\begin{equation}\label{3.002}
%B(a,b)=\frac{\Gamma(a)\Gamma(b)}{\Gamma(a+b)},
%\end{equation}
%where
%$\Gamma(a):=\int_{0}^{\infty}r^{a-1}e^{-r}dr $
%is the Gamma function. We denote
%$\Gamma_{0}:=\Gamma\big(\frac{N-1}{2}\big)$. %$\Gamma_{1}:=\Gamma\big(\frac{N}{2}\big)$.
%Then by using $\Gamma(a)=(a-1)\Gamma(a-1)$, we obtain that
%\begin{eqnarray}
%&&\Gamma\Big(\frac{N}{2}+1\Big)=\frac{N}{2}\Gamma_{1},\label{30.002}\\
%&&\Gamma\Big(\frac{N}{2}-1\Big)=\frac{2}{N-2}\Gamma_{1},\label{3.0002}\\
%&&\Gamma\Big(\frac{N}{2}-2\Big)=\frac{4}{(N-2)(N-4)}\Gamma_{1},\label{3.003}\\
%&&\Gamma\Big(\frac{N+1}{2}\Big)=\frac{N-1}{2}\Gamma_{0}, \label{3.02}\\
%&&\Gamma\Big(\frac{N-3}{2}\Big)=\frac{2}{N-3}\Gamma_{0}.\label{3.03}
%\end{eqnarray}

Before verifying \eqref{0}, we first present some crucial estimates for $\tilde {U}_{{\varepsilon}}(x) $.
\begin{Lem}\label{lem3.1}
%Suppose that $N\geq3$.
There holds
\begin{equation*}\label{3.09}
{\|\tilde{U}_{{\varepsilon}}\|}^{2}=
\begin{cases}
K_{1}+\alpha_{N}{{\varepsilon}}^{2}+o({{\varepsilon}}^{2}),~&~~N\geq 5,\\[1mm]
K_{1}+\frac{k_{4}^{2}\omega_4}{2}{{\varepsilon}}^{2}|\ln{\varepsilon}|+O({\varepsilon}^{2}),~&~~N=4,\\[1mm]
K_1+O({\varepsilon}),~&~~N=3,\\[1mm]
\end{cases}
\end{equation*}
where $\varepsilon>0$ is sufficiently small and
\begin{equation*}\label{4.01}
\alpha_{N}=\frac{(N-2)k_{N}^{2}}{2}\int_{\mathbb{R}^{N}_{+}}\frac{|y{'}|^{2}+{y_{N}(y_{N}+x_{N}^{0})}}
{{{{(1+|y{'}|^{2}+|y_{N}+x_{N}^{0}|^{2})^{N-1}}}}}dy.
%\frac{N{k_{N}^2}\pi^{\frac{N}{2}}}{2(N-4)}{\frac{\Gamma_{1}}{\Gamma({N-1})}}.
\end{equation*}
\end{Lem}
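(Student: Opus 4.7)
The plan is to exploit the identity $\nabla K/K = x/2$ to reduce the weighted norm of $\tilde U_\varepsilon$ to an unweighted expression. Since $\tilde U_\varepsilon = K^{-1/2}\phi U_\varepsilon$, a direct computation of $\nabla\tilde U_\varepsilon$ yields
\begin{equation*}
K(x)|\nabla\tilde U_\varepsilon|^2 = \Bigl|\phi\nabla U_\varepsilon + U_\varepsilon\nabla\phi - \tfrac{x}{4}\phi U_\varepsilon\Bigr|^2,
\end{equation*}
so expanding the square and integrating decomposes $\|\tilde U_\varepsilon\|^2$ into six pieces: the principal term $\int\phi^2|\nabla U_\varepsilon|^2\,dx$, the cross term $T:=-\tfrac{1}{2}\int\phi^2 U_\varepsilon(x\cdot\nabla U_\varepsilon)\,dx$, the quadratic term $\tfrac{1}{16}\int|x|^2\phi^2 U_\varepsilon^2\,dx$, and three remainders involving $\nabla\phi$.

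The easy pieces are handled first. Each $\nabla\phi$-remainder is supported in $B_2\setminus B_1$, where both $U_\varepsilon$ and $|\nabla U_\varepsilon|$ are $O(\varepsilon^{(N-2)/2})$, so these contribute only $O(\varepsilon^{N-2})$, consistent with the stated error in every dimension. By scaling, $\int_{\mathbb{R}^N_+}|\nabla U_\varepsilon|^2\,dx = K_1$ is $\varepsilon$-independent, and the cut-off tail of the principal term is again $O(\varepsilon^{N-2})$. The $|x|^2$-term, after the rescaling $y=x/\varepsilon$, becomes $\tfrac{k_N^2\varepsilon^4}{16}\int\phi^2(\varepsilon y)|y|^2(1+|y'|^2+|y_N+x_N^0|^2)^{-(N-2)}\,dy$, which a direct size check shows is $o(\varepsilon^2)$ for $N\geq 5$, $O(\varepsilon^2)$ for $N=4$, and $O(\varepsilon)$ for $N=3$.

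The heart of the computation is the cross term $T$. A direct differentiation gives
\begin{equation*}
x\cdot\nabla U_\varepsilon = -\frac{(N-2)k_N\varepsilon^{(N-2)/2}\bigl(|x'|^2+x_N(x_N+\varepsilon x_N^0)\bigr)}{(\varepsilon^2+|x'|^2+|x_N+\varepsilon x_N^0|^2)^{N/2}},
\end{equation*}
and substituting $y=x/\varepsilon$ one obtains
\begin{equation*}
T = \frac{(N-2)k_N^2\varepsilon^2}{2}\int_{\mathbb{R}^N_+}\phi^2(\varepsilon y)\,\frac{|y'|^2 + y_N(y_N+x_N^0)}{(1+|y'|^2+|y_N+x_N^0|^2)^{N-1}}\,dy.
\end{equation*}
For $N\geq 5$ the integrand is $O(|y|^{4-2N})$ at infinity and is integrable, so dominated convergence (with $\phi^2(\varepsilon y)\to 1$) yields $T = \alpha_N\varepsilon^2 + o(\varepsilon^2)$, with $\alpha_N$ exactly as stated, which closes the first case.

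The borderline dimensions are the main obstacle, since the previous integral diverges and the divergent part must be extracted explicitly. When $N=4$ the integrand decays only like $|y|^{-4}$, so splitting the support into $|y|\leq 1$ (bounded contribution, absorbed into the $O(\varepsilon^2)$ error) and $1\leq|y|\leq 2/\varepsilon$ and passing to polar coordinates—noting that the half-sphere $S^3\cap\mathbb{R}^4_+$ has measure $\omega_4/2$—produces $\tfrac{k_4^2\omega_4}{2}\varepsilon^2|\ln\varepsilon| + O(\varepsilon^2)$. When $N=3$ the integrand decays only like $|y|^{-2}$, and the same split now gives a linear divergence of order $1/\varepsilon$, hence $T = O(\varepsilon)$ and $\|\tilde U_\varepsilon\|^2 = K_1 + O(\varepsilon)$. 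Combining the three cases yields the lemma.
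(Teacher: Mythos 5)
Your proposal is correct and follows essentially the same route as the paper: the identical six-term expansion of $K|\nabla\tilde U_\varepsilon|^2$, the $O(\varepsilon^{N-2})$ disposal of the cut-off and $|x|^2$ contributions, and the rescaling $y=x/\varepsilon$ of the cross term $-\tfrac12\int\phi^2U_\varepsilon(x\cdot\nabla U_\varepsilon)\,dx$, with dominated convergence for $N\ge5$ and the explicit extraction of the $|y|^{-4}$ (resp. $|y|^{-2}$) divergence for $N=4$ (resp. $N=3$). The constants $\alpha_N$ and $\tfrac{k_4^2\omega_4}{2}$ come out exactly as in the paper's computation.
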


\begin{proof}
From the definitions of $\tilde{U}_{{\varepsilon}} $ and $\phi$, we directly compute that
\begin{equation*}
  \begin{aligned}
{\|\tilde{U}_{{\varepsilon}}    \|}^{2}
&=\int_{\mathbb{R}^{N}_{+}}\Big(|\nabla{\phi}|^{2}U_\varepsilon^{2}+\notag
2{\phi}{{{U}_{\varepsilon}}}({\nabla{\phi}}\cdot{\nabla{{{{U}_{\varepsilon}}}}})\notag
-\frac{1}{2}{\phi}U_\varepsilon^{2}(x\cdot{\nabla{\phi}})\Big)dx\\ &\quad+\int_{\mathbb{R}^{N}_{+}}{\phi}^{2}|\nabla{{{U}_{\varepsilon}}}|^{2}dx-\notag
\frac{1}{2}\int_{\mathbb{R}^{N}_{+}}{\phi}^{2}{{U}_{\varepsilon}}(x\cdot{\nabla{{{U}_{\varepsilon}}}})dx+\frac{1}{16}\int_{\mathbb{R}^{N}_{+}}{\phi}^{2}|x|^{2}U_\varepsilon^{2}dx \notag
  \end{aligned}
\end{equation*}
and
\begin{equation*}
\int_{\mathbb{R}^{N}_{+}}{|\nabla{\phi}|^{2}U_\varepsilon^{2}}dx={{\varepsilon}^{{N-2}}}\int_{ B_{2}^{+}\setminus{B_{1}^{+}}}{\frac{|\nabla{\phi}|^{2}{k_{N}^{2}}}{({\varepsilon}^{2}+|x{'}|^{2}+|x_{N}+{\varepsilon}x_{N}^{0}|^{2})^{N-2}}}dx=O({\varepsilon}^{N-2}),\notag
\end{equation*}
where $\varepsilon >0$ small enough and $B_{r}^{+}=B_{r}(0)\cap {\mathbb{R}}^{N}_{+}$ for any $r>0$. %Similarly, there holds
Similarly arguments for the other terms in the brackets above, we obtain
\begin{equation*}
\int_{\mathbb{R}^{N}_{+}}\Big(|\nabla{\phi}|^{2}U_\varepsilon^{2}+\notag
2{\phi}{{{U}_{\varepsilon}}}({\nabla{\phi}}\cdot{\nabla{{{{U}_{\varepsilon}}}}})\notag
-\frac{1}{2}{\phi}U_\varepsilon^{2}(x\cdot{\nabla{\phi}})\Big)dx=O({\varepsilon}^{N-2}),
\end{equation*}
and thus
\begin{equation}\label{3.05}
{\|\tilde{U}_{\varepsilon}\|}^{2}
= \int_{\mathbb{R}^{N}_{+}}{\phi}^{2}|\nabla{U_{\varepsilon}}|^{2}dx-
\frac{1}{2}\int_{\mathbb{R}^{N}_{+}}\phi^{2}{{U}_{\varepsilon}}(x\cdot{\nabla{{{U}_{\varepsilon}}}})dx+\frac{1}{16}\int_{\mathbb{R}^{N}_{+}}{\phi}^{2}|x|^{2}U_\varepsilon^{2}dx+O(\varepsilon^{N-2}).
\end{equation}

To estimate each of the integrals on the right-hand side of (\ref{3.05}), we first calculate that
\begin{equation*}
{\nabla{{{U}_{\varepsilon}}}}=-\frac{{(N-2)k_{N}{\varepsilon}^{\frac {N-2}{2}}}}{{({\varepsilon}^{2}+|x{'}|^{2}+|x_{N}+{\varepsilon}x_{N}^{0}|^{2})^{\frac{N}{2}}}}(x_{1}, \cdots ,x_{N-1},x_{N}+{\varepsilon}x_{N}^{0}).\notag
\end{equation*}
From $\phi^{2}|\nabla{U_{\varepsilon}}|^{2}=|\nabla U_{\varepsilon}|^{2} +(\phi^{2}-1)|\nabla{U_{\varepsilon}}|^{2}$ and \eqref{2.111}, we get that
\begin{equation}\label{3.06}
	\begin{aligned}
		\int_{\mathbb{R}^{N}_{+}}{\phi}^{2}|\nabla{{{U}_{\varepsilon}}}|^{2}dx
		&=
		K_{1}+{(N-2)^{2}k_{N}^{2}}{\varepsilon}^{N-2}\int_{ {\mathbb{R}^{N}_{+}}\setminus{B_{1}^{+}}} \frac{({\phi}^{2}-1)(|x{'}|^{2}+|x_{N}+{\varepsilon}x_{N}^{0}|^{2})}
		{{{({\varepsilon}^{2}+|x{'}|^{2}+|x_{N}+{\varepsilon}x_{N}^{0}|^{2})^{N}}}}dx\\
		&=K_{1}+O({\varepsilon}^{N-2}),
	\end{aligned}
\end{equation}
where we used the fact that the integral
\begin{equation*}
	\int_{ {\mathbb{R}^{N}_{+}}\setminus{B_{1}^{+}}}\frac{|x{'}|^{2}+|x_{N}+{\varepsilon}x_{N }^{0}|^{2}}{{{ (\varepsilon^{2}+|x'|^{2}+|x_{N}+{\varepsilon}x_{N}^{0}|^{2})^{N}}}}dx
\end{equation*}
is finite as $N\geq 3$.

Now we are going to estimate the second integral in the right-hand side of (\ref {3.05}).
 Using the same arguments, %by changing of variables $y={x}/{{\varepsilon}}$,
there holds for $N\geq 5$,
\begin{equation}\label{3.07}
 \begin{aligned}
& \ \ \ \ \int_{\mathbb{R}^{N}_{+}}{\phi}^{2}{{U}_{\varepsilon}}(x\cdot{\nabla{{{U}_{\varepsilon}}}})dx\\
&=\int_{\mathbb{R}^{N}_{+}}{{U}_{\varepsilon}}(x\cdot{\nabla{{{U}_{\varepsilon}}}})dx+O({\varepsilon}^{N-2})\\
&=-(N-2)k_{N}^{2}{\varepsilon}^{N-2}\int_{\mathbb{R}^{N}_{+}}\frac{|x{'}|^{2}+x_{N}(x_{N}+{\varepsilon}x_{N}^{0})}{{{{({\varepsilon}^{2}+|x{'}|^{2} +|x_{N}+{\varepsilon}x_{N}^{0}|^{2})^{N-1}}}}}dx+O({\varepsilon}^{N-2})\\
&=-(N-2)k_{N}^{2}{\varepsilon}^{2}\int_{\mathbb{R}^{N}_{+}}\frac{|y{'}|^{2}+y_{N}(y_{N}+x_{N}^{0})}{{{{(1+|y{'}|^{2}+|y_{N}+x_{N}^{0}|^{2})^{N-1}}}}}dy+O({\varepsilon}^{N-2})\\
&=-(N-2)k_{N}^{2}C_{1, N}{\varepsilon}^{2}+O({\varepsilon}^{N-2}),
 \end{aligned}
\end{equation}
 where
\begin{equation}\label{adc}
	C_{1,N}=\int_{\mathbb{R}^{N}_{+}}\frac{|y{'}|^{2}+{y_{N}(y_{N}+x_{N}^{0})}}
	{{{{(1+|y{'}|^{2}+|y_{N}+x_{N}^{0}|^{2})^{N-1}}}}}dy.
\end{equation}

For $N=3, 4$, we have
\begin{equation}\label{z1}
\begin{aligned}
\int_{\mathbb{R}^{N}_{+}}{\phi}^{2}{{U}_{\varepsilon}}(x\cdot{\nabla{{{U}_{\varepsilon}}}})dx
&=\int_{B_{2}^{+}}{{U}_{\varepsilon}}(x\cdot{\nabla{{{U}_{\varepsilon}}}})dx
+\int_{B_{2}^{+}\setminus{B_{1}^{+}}}({\phi}^{2}-1){{U}_{\varepsilon}}(x\cdot{\nabla{{{U}_{\varepsilon}}}})dx\\
&=-(N-2)k_{N}^{2}{\varepsilon}^{2}\int_{B_{2/\varepsilon}^{+}}\frac{|y{'}|^{2}+y_{N}(y_{N}+x_{N}^{0})}
{{{{(1+|y{'}|^{2}+|y_{N}+x_{N}^{0}|^{2})^{N-1}}}}}dy+O({\varepsilon}^{N-2})\\
&=-(N-2)k_{N}^{2}{\varepsilon}^{2}\int_{B_{2/\varepsilon}^{+}\setminus{B_{1}^{+}}}\frac{|y{'}|^{2}+y_{N}(y_{N}+x_{N}^{0})}
{{{{(1+|y{'}|^{2}+|y_{N}+x_{N}^{0}|^{2})^{N-1}}}}}dy+O(\varepsilon^2)\\
&\quad+O({\varepsilon}^{N-2}).
\end{aligned}
\end{equation}
Noting that for $N=4$,
\begin{equation*}
\begin{aligned}
\int_{B_{2/\varepsilon}^{+}\setminus B_{1}^{+}}\frac{|y{'}|^{2}+y_{4}(y_{4}+x_{4}^{0})}
{{{{(1+|y{'}|^{2}+|y_{4}+x_{4}^{0}|^{2})^{3}}}}}dy
&=\int_{B_{2/\varepsilon}^{+}\setminus{B_{1}^{+}}}\frac{|y|^{2}}
{{{{(1+|y{'}|^{2}+|y_{4}+x_{4}^{0}|^{2})^{3}}}}}dy\\
&\quad+x_{4}^{0}
\int_{B_{2/\varepsilon}^{+}\setminus{B_{1}^{+}}}\frac{y_{4}}
{{{{(1+|y'|^{2}+|y_{4}+x_{4}^{0}|^{2})^{3}}}}}dy\\
&=\int_{B_{2/\varepsilon}^{+}\setminus{B_{1}^{+}}}\frac{|y|^{2}}
{{{{(1+|y{'}|^{2}+|y_{4}+x_{4}^{0}|^{2})^{3}}}}}dy+O_\varepsilon(1)
\end{aligned}
\end{equation*}
and
\begin{equation*}
\begin{aligned}
0&<\int_{B_{2/\varepsilon}^{+}\setminus{B_{1}^{+}}}\frac{1}{|y|^{4}}dy-\int_{B_{2/\varepsilon}^{+}\setminus{B_{1}^{+}}}\frac{|y|^{2}}
{{{{(1+|y{'}|^{2}+|y_{4}+x_{4}^{0}|^{2})^{3}}}}}dy\\
&=\int_{B_{2/\varepsilon}^{+}\setminus{B_{1}^{+}}}\frac{{(1+|y{'}|^{2}+|y_{4}+x_{4}^{0}|^{2})^{3}}-|y|^{6}}{|y|^{4}{(1+|y{'}|^{2}+|y_{4}+x_{4}^{0}|^{2})^{3}}}dy=O_\varepsilon(1),
\end{aligned}
\end{equation*}
 we deduce that
\begin{equation}\label{zz2}
\begin{aligned}
&\ \ \ \ \int_{B_{2/\varepsilon}^{+}\setminus B_{1}^{+}}\frac{|y{'}|^{2}+y_{4}(y_{4}+x_{4}^{0})}
{{{{(1+|y{'}|^{2}+|y_{4}+x_{4}^{0}|^{2})^{3}}}}}dy\\
&=\int_{B_{2/\varepsilon}^{+}\setminus{B_{1}^{+}}}\frac{1}{|y|^{4}}dy+O_\varepsilon(1)
=\frac{\omega_4}{2}\int_{1}^{{{2}/\varepsilon}}r^{-1}dr+O_\varepsilon(1)\\
&=\frac{\omega_4}{2}(|\ln \varepsilon|+\ln 2)+O_\varepsilon(1),
\end{aligned}
\end{equation}
 where $\omega_{4}$ is the area of unit sphere in ${\mathbb{R}^{4}}$ and $O_\varepsilon(1)$ is a constant associated with $\varepsilon$.
 We have that, for $N=3$
\begin{equation}\label{zz3}
\begin{aligned}
\int_{B_{2/\varepsilon}^{+}\setminus{B_{1}^{+}}}\frac{|y{'}|^{2}+y_{3}(y_{3}+x_{3}^{0})}
{{{{(1+|y{'}|^{2}+|y_{3}+x_{3}^{0}|^{2})^{2}}}}}dy
&=O\Big(\int_{B_{2/\varepsilon}^{+}\setminus{B_{1}^{+}}}\frac{1}{|y|^{2}}dy\Big)
=O(\varepsilon^{-1}).
\end{aligned}
\end{equation}
It follows from \eqref{z1}--\eqref{zz3} that
\begin{equation}\label{cz1}
\int_{\mathbb{R}^{N}_{+}}{\phi}^{2}{{U}_{\varepsilon}}(x\cdot{\nabla{{{U}_{\varepsilon}}}})dx
=
\begin{cases}
{-k_{4}^{2}\omega_4}{{\varepsilon}}^{2}|{\ln}\varepsilon|+O({{\varepsilon}}^{2}),~&~N=4,\\[1mm]
O({{\varepsilon}}),~&~N=3.
\end{cases}
\end{equation}

Arguing as above, we can calculate the last integral in the right-hand side of (\ref {3.05}) as follows
\begin{equation*}
  \begin{aligned}
\int_{\mathbb{R}^{N}_{+}}{\phi}^{2}|x|^{2}U_\varepsilon^{2}dx
&=\int_{B_{2}^{+}}|x|^{2}U_\varepsilon^{2}dx+\int_{ B_{2}^{+}\setminus{B_{1}^{+}}}({\phi}^{2}-1)|x|^{2}U_\varepsilon^{2}dx\\
&=\int_{B_{2}^{+}}|x|^{2}U_\varepsilon^{2}dx+O({\varepsilon}^{N-2})\\
&={\varepsilon}^{4}k_{N}^{2}\int_{{{B^{+}_{{2}/{\varepsilon}}}}}
\frac{|y|^{2}}{{({1+|y{'}|^{2}+|y_{N}+x_{N}^{0}|^{2})^{N-2}}}}dy+O({\varepsilon}^{N-2})\notag\\
&={\varepsilon}^{4}k_{N}^{2}\int_{B^{+}_{{2}/{\varepsilon}}\setminus{B_{1}^{+}}}
\frac{|y|^{2}}{{({1+|y{'}|^{2}+|y_{N}+x_{N}^{0}|^{2})^{N-2}}}}dy+O({\varepsilon}^{4})+O({\varepsilon}^{N-2}).\notag
  \end{aligned}
\end{equation*}
Moreover,
\begin{equation*}
  \begin{aligned}
\int_{{B^{+}_{{2}/{\varepsilon}}}\setminus{B_{1}^{+}}}
\frac{|y|^{2}}{{({1+|y{'}|^{2}+|y_{N}+x_{N}^{0}|^{2})^{N-2}}}}dy
&=O\Big(\int_{{B^{+}_{{2}/{\varepsilon}}}\setminus B_{1}^{+}}\frac{1}{|y|^{2N-6}}dy\Big)\notag\\
&=O\Big(\int_{1}^{{2}/{\varepsilon}}r^{5-N}dr\Big).\notag
  \end{aligned}
\end{equation*}
Hence, one has
\begin{equation}\label{cz3}
\int_{\mathbb{R}^{N}_{+}}{\phi}^{2}|x|^{2}U_\varepsilon^{2}dx=
\begin{cases}
O({\varepsilon}^{4}),~&~~N\geq 7,\\[1mm]
O({\varepsilon}^{4}|\ln{\varepsilon}|),~&~~N=6,\\[1mm]
O({\varepsilon}^{N-2}),~&~~3\leq N\leq5.\\[1mm]
\end{cases}
\end{equation}

We derive from \eqref{3.05}, \eqref{3.06}, \eqref{3.07}, \eqref{cz1} and \eqref{cz3} that
\begin{equation*}\label{13.09}
{\|\tilde{U}_{{\varepsilon}}\|}^{2}=
\begin{cases}
K_{1}+\frac{(N-2)k_{N}^{2}}{2}C_{1,N}\varepsilon^{2}+o({\varepsilon}^{2}),~&~~N\geq 5,\\[1mm]
K_{1}+\frac{k_{4}^{2}\omega_4}{2}{{\varepsilon}}^{2}|\ln{\varepsilon}|+O({\varepsilon}^{2}),~&~~N=4,\\[1mm]
K_{1}+O({\varepsilon}),~&~~N=3,\\[1mm]
\end{cases}
\end{equation*}
where $C_{1,N}$ is given by (\ref{adc}).
Therefore, we finish the proof.
\end{proof}

\begin{Lem}\label{lem3.3}
If $N\geq3$, we have
\begin{equation*}
{\|\tilde{U}_{{\varepsilon}}\|}_{L^{{2}^{*}}_{K}({\mathbb{R}}^{N}_{+})}^{2^{*}}=K_{2}-\beta_{N}{{\varepsilon}}^{2}+o({\varepsilon}^{2}),\notag
\end{equation*}
where $\varepsilon> 0$ is sufficiently small and
\begin{equation*}\label{4.02}
\beta_{N}=\frac{k_{N}^{2^{*}}}{2(N-2)}
\int_{\mathbb{R}^{N}_{+}}\frac{|y|^{2}}
{{{{(1+|y{'}|^{2}+|y_{N}+x_{N}^{0}|^{2})^{N}}}}}dy.
%\frac{Nk_{N}^{2^{*}}{\pi^{\frac{N}{2}}}}{4(N-2)^{2}(N-1)}{\frac{\Gamma_{1}}{\Gamma({N-1})}}.
\end{equation*}
\end{Lem}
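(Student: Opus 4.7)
\medskip

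The plan is to exploit the algebraic identity $2^*/2 = N/(N-2)$, which gives $K(x)\cdot K(x)^{-2^*/2} = K(x)^{1-2^*/2} = e^{-|x|^2/(2(N-2))}$. Consequently
\begin{equation*}
\|\tilde{U}_{\varepsilon}\|_{L^{2^*}_{K}(\mathbb{R}^N_+)}^{2^*}
= \int_{\mathbb{R}^N_+} e^{-|x|^2/(2(N-2))}\phi(x)^{2^*}U_\varepsilon(x)^{2^*}\,dx,
\end{equation*}
so the exponential weight $K$ disappears and is replaced by a bounded smooth factor on $\mathrm{supp}\,\phi\subset B_2$. I would then Taylor expand
\begin{equation*}
e^{-|x|^2/(2(N-2))} = 1-\frac{|x|^2}{2(N-2)}+\rho(x),
\qquad |\rho(x)|\le C|x|^4\ \text{on }\mathrm{supp}\,\phi,
\end{equation*}
and split the integral into three pieces: the ``$1$'' term, the ``$-|x|^2/(2(N-2))$'' term, and a remainder involving $\rho$.

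For the first piece, $\int_{\mathbb{R}^N_+}\phi^{2^*}U_\varepsilon^{2^*}\,dx = K_2 + \int_{\mathbb{R}^N_+\setminus B_1^+}(\phi^{2^*}-1)U_\varepsilon^{2^*}\,dx$; since $U_\varepsilon^{2^*}\le C\varepsilon^N|x|^{-2N}$ outside $B_1^+$, this tail is $O(\varepsilon^N)=o(\varepsilon^2)$ for $N\ge 3$. For the second piece I rescale $x=\varepsilon y$, obtaining
\begin{equation*}
\int_{\mathbb{R}^N_+}|x|^2\phi(x)^{2^*}U_\varepsilon(x)^{2^*}\,dx
= \varepsilon^2 k_N^{2^*}\int_{\mathbb{R}^N_+}\frac{|y|^2\phi(\varepsilon y)^{2^*}}{(1+|y'|^2+|y_N+x_N^0|^2)^N}\,dy,
\end{equation*}
whose integrand is dominated by $|y|^2/(1+|y|^2)^N\in L^1(\mathbb{R}^N_+)$ for $N\ge 2$. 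Dominated convergence (using $\phi(\varepsilon y)\to 1$ pointwise) then produces $\varepsilon^2\cdot k_N^{2^*}\int_{\mathbb{R}^N_+}|y|^2/(1+|y'|^2+|y_N+x_N^0|^2)^N\,dy + o(\varepsilon^2)$, giving exactly $2(N-2)\beta_N\varepsilon^2 + o(\varepsilon^2)$ after multiplication by $-1/(2(N-2))$.

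The main obstacle is controlling the remainder $\int|x|^{2k}\phi^{2^*}U_\varepsilon^{2^*}\,dx$ for $k\ge 2$, because the scaled integrand $|y|^{2k}/(1+|y|^2)^N$ need not be integrable at infinity when $2k\ge N$. Here the cutoff is essential: the rescaled support is $B^+_{2/\varepsilon}$, so using polar coordinates one obtains $\varepsilon^{2k}\int_1^{2/\varepsilon}r^{2k-N-1}\,dr$, which is $O(\varepsilon^{2k})$ if $k<N/2$, $O(\varepsilon^{2k}|\ln\varepsilon|)$ if $k=N/2$, and $O(\varepsilon^N)$ if $k>N/2$. In every case and for every $N\ge 3$, the result is $o(\varepsilon^2)$. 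Combining the three estimates yields
\begin{equation*}
\|\tilde{U}_{\varepsilon}\|_{L^{2^*}_K(\mathbb{R}^N_+)}^{2^*}
= K_2 - \beta_N\varepsilon^2 + o(\varepsilon^2),
\end{equation*}
as claimed.
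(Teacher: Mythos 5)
Your proposal is correct and follows essentially the same route as the paper: both cancel the weight via $K(x)^{1-2^*/2}=e^{-|x|^2/(2(N-2))}$, discard the cutoff/tail at cost $O(\varepsilon^N)$, extract the leading $K_2$ and the $-|x|^2/(2(N-2))$ contribution by the rescaling $x=\varepsilon y$, and kill the quartic Taylor remainder with the same polar-coordinate bound $\varepsilon^4\int_1^{C/\varepsilon}r^{3-N}\,dr=o(\varepsilon^2)$. The only (harmless) difference is organizational: you Taylor-expand in $x$ on the bounded support of $\phi$, whereas the paper expands after rescaling and splits the domain at $|y|=1/\varepsilon$.
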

	
\begin{proof}
Note that
\begin{equation*}\label{w1}
	\begin{aligned}
{\|\tilde{U}_{{\varepsilon}}\|}_{L^{{2}^{*}}_{K}({\mathbb{R}}^{N}_{+})}^{2^{*}}
&=\int_{\mathbb{R}^{N}_{+}}K(x)\tilde{U}_{{\varepsilon}}^{2^{*}}dx=\int_{\mathbb{R}^{N}_{+}}K(x)^{\frac{2}{2-N}}{\phi}^{2^{*}}U_{\varepsilon}^{2^{*}}dx\\
&=\int_{\mathbb{R}^{N}_{+}}K(x)^{\frac{2}{2-N}}U_{\varepsilon}^{2^{*}}dx+O({\varepsilon}^{N})\\
&=\int_{\mathbb{R}^{N}_{+}}U_{\varepsilon}^{2^{*}}dx+\int_{\mathbb{R}^{N}_{+}}\big(K(x)^{\frac{2}{2-N}}-1\big)U_{\varepsilon}^{2^{*}}dx+O({\varepsilon}^{N}).
	\end{aligned}
\end{equation*}
By \eqref{2.111} and using the change of variables $y={x}/{{\varepsilon}}$, we conclude that
\begin{equation*}
  \begin{aligned}
{\|\tilde{U}_{{\varepsilon}}\|}_{L^{{2}^{*}}_{K}({\mathbb{R}}^{N}_{+})}^{2^{*}}
=K_{2}+k_{N}^{2^{*}}\int_{\mathbb{R}^{N}_{+}}\frac{K({\varepsilon}y)^{\frac{2}{2-N}}-1}
{{{{(1+|y{'}|^{2}+|y_{N}+x_{N}^{0}|^{2})^{N}}}}}dy+O({\varepsilon}^{N}).
  \end{aligned}
\end{equation*}
Since $K(x)= e^{|x|^{2}/4}$, there holds
\begin{equation*}
\int_{\mathbb{R}^{N}_{+}}\frac{K({\varepsilon}y)^{\frac{2}{2-N}}-1}
{{{{(1+|y{'}|^{2}+|y_{N}+x_{N}^{0}|^{2})^{N}}}}}dy
=\int_{\mathbb{R}^{N}_{+}}\frac{ e^{-\frac{{\varepsilon}^{2}|y|^{2}}{2(N-2)}}-1 }
{{{{(1+|y{'}|^{2}+|y_{N}+x_{N}^{0}|^{2})^{N}}}}}dy,
\end{equation*}
which yields that
\begin{equation}\label{w2}
  \begin{aligned}
{\|\tilde{U}_{{\varepsilon}}\|}_{L^{{2}^{*}}_{K}({\mathbb{R}}^{N}_{+})}^{2^{*}}
=K_{2}+k_{N}^{2^{*}}\int_{\mathbb{R}^{N}_{+}}\frac{ e^{-\frac{{\varepsilon}^{2}|y|^{2}}{2(N-2)}}-1 }
{{{{(1+|y{'}|^{2}+|y_{N}+x_{N}^{0}|^{2})^{N}}}}}dy+O({\varepsilon}^{N}).
  \end{aligned}
\end{equation}
Obviously,
\begin{equation}\label{w3}
  \begin{aligned}
&\ \ \ \ \int_{\mathbb{R}^{N}_{+}}\frac{ e^{-\frac{{\varepsilon}^{2}|y|^{2}}{2(N-2)}}-1 }
{{{{(1+|y{'}|^{2}+|y_{N}+x_{N}^{0}|^{2})^{N}}}}}dy
-\int_{\mathbb{R}^{N}_{+}}\frac{-\frac{\varepsilon^{2}|y|^{2}}{2(N-2)}}
{{{{(1+|y{'}|^{2}+|y_{N}+x_{N}^{0}|^{2})^{N}}}}}dy\\
&=\int_{B_{1/\varepsilon}^{+}}\frac{ e^{-\frac{{\varepsilon}^{2}|y|^{2}}{2(N-2)}}-1+\frac{\varepsilon^{2}|y|^{2}}{2(N-2)} }
{{{{(1+|y{'}|^{2}+|y_{N}+x_{N}^{0}|^{2})^{N}}}}}dy+\int_{\mathbb{R}^{N}_{+}\setminus B_{1/\varepsilon}^{+}}\frac{ e^{-\frac{{\varepsilon}^{2}|y|^{2}}{2(N-2)}}-1+\frac{\varepsilon^{2}|y|^{2}}{2(N-2)} }
{{{{(1+|y{'}|^{2}+|y_{N}+x_{N}^{0}|^{2})^{N}}}}}dy.
  \end{aligned}
\end{equation}
It follows from Taylor's formula that
\begin{equation*}
e^{-\frac{{\varepsilon}^{2}|y|^{2}}{2(N-2)}}-1=-\frac{{\varepsilon}^{2}|y|^{2}}{2(N-2)}+O({\varepsilon}^{4}|y|^4),\quad\quad
y\in B_{1/\varepsilon}^{+},
\end{equation*}
and then
\begin{equation*}%\label{w4}
  \begin{aligned}
\int_{B_{1/\varepsilon}^{+}}\frac{ e^{-\frac{{\varepsilon}^{2}|y|^{2}}{2(N-2)}}-1+\frac{\varepsilon^{2}|y|^{2}}{2(N-2)} }
{{{{(1+|y{'}|^{2}+|y_{N}+x_{N}^{0}|^{2})^{N}}}}}dy
&=O\Big(\varepsilon^4\int_{B_{1/\varepsilon}^{+}}\frac{ |y|^{4}}
{{{{(1+|y{'}|^{2}+|y_{N}+x_{N}^{0}|^{2})^{N}}}}}dy\Big)\\
&=O\Big(\varepsilon^4\int_{B_{1/\varepsilon}^{+}\setminus B_{1}^+}\frac{ |y|^{4}}
{{{{(1+|y{'}|^{2}+|y_{N}+x_{N}^{0}|^{2})^{N}}}}}dy\Big)
+O(\varepsilon^4).
  \end{aligned}
  \end{equation*}
Due to
\begin{equation*}
  \begin{aligned}
\int_{B_{1/\varepsilon}^{+}\setminus B_{1}^+}\frac{ |y|^{4}}
{{{{(1+|y{'}|^{2}+|y_{N}+x_{N}^{0}|^{2})^{N}}}}}dy
&=O\Big(\int_{B_{1/\varepsilon}^{+}\setminus B_{1}^+}\frac{ 1}
{|y|^{2N-4}}dy\Big)\\
&=O\Big(\int_{1}^{{{1}/\varepsilon}}r^{-N+3}dr\Big),
  \end{aligned}
\end{equation*}
we obtain that for $N\geq 3$,
\begin{equation}\label{w5}
\int_{B_{1/\varepsilon}^{+}}\frac{ e^{-\frac{{\varepsilon}^{2}|y|^{2}}{2(N-2)}}-1+\frac{\varepsilon^{2}|y|^{2}}{2(N-2)} }
{{{{(1+|y{'}|^{2}+|y_{N}+x_{N}^{0}|^{2})^{N}}}}}dy=o(\varepsilon^2).
\end{equation}
On the other hand, one get
\begin{equation}\label{w6}
  \begin{aligned}
&\ \ \ \ \int_{\mathbb{R}^{N}_{+}\setminus B_{1/\varepsilon}^{+}}\frac{ e^{-\frac{{\varepsilon}^{2}|y|^{2}}{2(N-2)}}-1+\frac{\varepsilon^{2}|y|^{2}}{2(N-2)} }
{{{{(1+|y{'}|^{2}+|y_{N}+x_{N}^{0}|^{2})^{N}}}}}dy\\
&=O\Big(\int_{\mathbb{R}^{N}_{+}\setminus B_{1/\varepsilon}^{+}}\frac{ 1}
{|y|^{2N}}dy\Big)+
O\Big(\varepsilon^2\int_{\mathbb{R}^{N}_{+}\setminus B_{1/\varepsilon}^{+}}\frac{ 1}
{|y|^{2N-2}}dy\Big)\\
&=O\Big(\int_{{{1}/\varepsilon}}^{\infty}r^{-N-1}dr\Big)+O\Big(\varepsilon^2\int_{{{1}/\varepsilon}}^{\infty}r^{-N+1}dr\Big)
=O(\varepsilon^N).
  \end{aligned}
\end{equation}
Substituting \eqref{w5} and \eqref{w6} into \eqref{w3}, we have that
\begin{equation}\label{w7}
  \begin{aligned}
& \ \ \ \ \int_{\mathbb{R}^{N}_{+}}\frac{ e^{-\frac{{\varepsilon}^{2}|y|^{2}}{2(N-2)}}-1 }
{{{{(1+|y{'}|^{2}+|y_{N}+x_{N}^{0}|^{2})^{N}}}}}dy\\
&=-\frac{\varepsilon^{2}}{2(N-2)}\int_{\mathbb{R}^{N}_{+}}\frac{|y|^{2}}
{{{{(1+|y{'}|^{2}+|y_{N}+x_{N}^{0}|^{2})^{N}}}}}dy+o({\varepsilon}^{2}).
  \end{aligned}
\end{equation}
It follows from \eqref{w2} and \eqref{w7} that
\begin{equation*}\label{3.016}
{\|\tilde{U}_{{\varepsilon}}\|}_{{L}_{K}^{2^{*}}({\mathbb{R}}^{N}_{+})}^{2^{*}}
=K_{2}- {\varepsilon}^{2}\frac{k_{N}^{2^{*}}}{2(N-2)}\int_{\mathbb{R}^{N}_{+}}\frac{|y|^{2}}
{{{{(1+|y{'}|^{2}+|y_{N}+x_{N}^{0}|^{2})^{N}}}}}dy+o({\varepsilon}^{2}).
\end{equation*}
%By changing of variables $z{'}=y{'},z_{N}={y_{N}+x_{N}^{0}}$ and using polar coordinates, we obtain that
%\begin{equation*}
 % \begin{aligned}
%C_{2,N}
%&<\int_{\mathbb{R}^{N}_{+}}\frac{|z|^{2}}{(1+|z|^{2})^{N}}dz
%=\frac{1}{2}\int_{\mathbb{R}^{N}}\frac{|z|^{2}}{(1+|z|^{2})^{N}}dz\\
%&=\frac{\omega_{N}}{2}\int_{0}^{\infty}\frac{{\rho}^{N+1}}{(1+{\rho}^{2})^{N}}d{\rho}
%=\frac{\omega_{N}}{4}\int_{0}^{\infty}\frac{{r}^{\frac{N}{2}}}{(1+r)^{N}}dr\\
%&=\frac{\omega_{N}}{4}B\Big(\frac{N}{2}+1,\frac{N}{2}-1\Big).
 % \end{aligned}
%\end{equation*}
%Moreover, from \eqref{30.002} and \eqref{3.0002} we obtain
%\begin{equation*}
 % \begin{aligned}
%\frac{k_{N}^{2^{*}}}{2(N-2)}C_{2,N}
%&<\frac{k_{N}^{2^{*}}{\omega_{N}}}{8(N-2)}B\Big(\frac{N}{2}+1,\frac{N}{2}-1\Big)\\
%&=\frac{Nk_{N}^{2^{*}}\pi^{\frac{N}{2}}}{4(N-2)^{2}(N-1)}{\frac{\Gamma_{1}}{\Gamma({N-1})}}.
 % \end{aligned}
%\end{equation*}
%Substituting the above into \eqref{3.016}, we get
%\begin{equation*}
%{\|\tilde{U}_{{\varepsilon}}\|}_{{L}_{K}^{2^{*}}({\mathbb{R}}^{N}_{+})}^{2^{*}}
%>K_{2}-{\varepsilon}^{2}\frac{Nk_{N}^{2^{*}}\pi^{\frac{N}{2}}}{4(N-2)^{2}(N-1)}{\frac{\Gamma_{1}}{\Gamma({N-1})}}+o({\varepsilon}^{3}).
%\end{equation*}
We complete the proof of this Lemma.
\end{proof}

\begin{Lem}\label{lem3.4}
If $N\geq3$, one has

\begin{equation*}
{\|\tilde{U}_{{\varepsilon}}\|}_{L^{{2}_{*}}_{K}({\mathbb{R}}^{N-1})}^{2_{*}}
=
\begin{cases}
K_{3}-\gamma_{N}{{\varepsilon}}^{2}+o({\varepsilon}^{2}),~&~~N\geq4,\\[1mm]
K_{3}+O({\varepsilon}^2|\ln\varepsilon|),~&~~N=3,\\[1mm]
\end{cases}
\end{equation*}
where $\varepsilon> 0$ is sufficiently small and
\begin{equation*}\label{4.03}
\gamma_{N}=\frac{k_{N}^{2_{*}}}{4(N-2)}\int_{\mathbb{R}^{N-1}}\frac{|y{'}|^{2}}
{{{{(1+|y{'}|^{2}+|x_{N}^{0}|^{2})^{N-1}}}}}dy{'}.
\end{equation*}
\end{Lem}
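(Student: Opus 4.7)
The plan is to mirror the interior estimate carried out in Lemma \ref{lem3.3}, transposed to the boundary hyperplane. Since $\tilde{U}_{\varepsilon}=K^{-1/2}\phi U_{\varepsilon}$, a direct unfolding gives
$$\|\tilde{U}_{\varepsilon}\|_{L^{2_{*}}_{K}(\mathbb{R}^{N-1})}^{2_{*}}=\int_{\mathbb{R}^{N-1}}K(x',0)^{1-2_{*}/2}\phi(x',0)^{2_{*}}U_{\varepsilon}^{2_{*}}(x',0)\,dx'.$$
Using $2_{*}/2=(N-1)/(N-2)$ one has $K(x',0)^{1-2_{*}/2}=e^{-|x'|^{2}/(4(N-2))}$, and since $\phi\equiv 1$ on $B_{1}(0)$ while $U_{\varepsilon}^{2_{*}}$ is integrable on $\mathbb{R}^{N-1}$, removing the cut-off costs only $O(\varepsilon^{N-1})$. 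After the change of variables $y'=x'/\varepsilon$ (the Jacobian $\varepsilon^{N-1}$ cancels the $\varepsilon^{N-1}$ produced by $U_{\varepsilon}^{2_{*}}(x',0)$ because $(N-2)\cdot 2_{*}/2=N-1$), one obtains
$$\|\tilde{U}_{\varepsilon}\|_{L^{2_{*}}_{K}(\mathbb{R}^{N-1})}^{2_{*}}=k_{N}^{2_{*}}\int_{\mathbb{R}^{N-1}}\frac{e^{-\varepsilon^{2}|y'|^{2}/(4(N-2))}}{(1+|y'|^{2}+|x_{N}^{0}|^{2})^{N-1}}\,dy'+O(\varepsilon^{N-1}).$$
Subtracting and adding $1$ in the numerator isolates $K_{3}$ as the leading term and leaves the problem of estimating $\int_{\mathbb{R}^{N-1}}(e^{-\varepsilon^{2}|y'|^{2}/(4(N-2))}-1)(1+|y'|^{2}+|x_{N}^{0}|^{2})^{-(N-1)}dy'$.

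For $N\ge 4$, I would split this remainder at $|y'|=1/\varepsilon$. On $B'_{1/\varepsilon}$, Taylor's formula gives $e^{t}-1=t+O(t^{2})$ with $t=-\varepsilon^{2}|y'|^{2}/(4(N-2))$, so the linear part yields $-\gamma_{N}\varepsilon^{2}$ exactly as in the statement, provided $\int_{\mathbb{R}^{N-1}}|y'|^{2}/(1+|y'|^{2}+|x_{N}^{0}|^{2})^{N-1}dy'<\infty$, which holds because the integrand decays like $|y'|^{-2(N-2)}$ with $2(N-2)>N-1$ when $N\ge 4$. The quadratic Taylor remainder contributes $\varepsilon^{4}\int_{B'_{1/\varepsilon}}|y'|^{4}(1+|y'|^{2}+\cdots)^{-(N-1)}dy'$, which is $o(\varepsilon^{2})$ for each $N\ge 4$ (by explicit computation in polar coordinates, $O(\varepsilon^{N-1})$ for $N\ge 6$, $O(\varepsilon^{4}|\ln\varepsilon|)$ when $N=5$, and $O(\varepsilon^{3})$ when $N=4$). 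On the complement $\{|y'|>1/\varepsilon\}$, the exponential is bounded, so the contribution is controlled by the tail $\int_{1/\varepsilon}^{\infty}r^{-(N+1)}dr=O(\varepsilon^{N})$. Combining gives the claimed $K_{3}-\gamma_{N}\varepsilon^{2}+o(\varepsilon^{2})$.

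For $N=3$ the issue is that the would-be leading integral $\int_{\mathbb{R}^{2}}|y'|^{2}/(1+|y'|^{2}+|x_{3}^{0}|^{2})^{2}dy'$ diverges logarithmically, so one cannot Taylor expand globally. Instead I again split at $|y'|=1/\varepsilon$. The inner piece is bounded by $\varepsilon^{2}\int_{B'_{1/\varepsilon}\setminus B'_{1}}|y'|^{-2}dy'+O(\varepsilon^{2})=O(\varepsilon^{2}|\ln\varepsilon|)$, since the integrand behaves like $|y'|^{-2}$ at infinity in $\mathbb{R}^{2}$; the outer piece is $O(\int_{1/\varepsilon}^{\infty}r^{-3}dr)=O(\varepsilon^{2})$. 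Thus in dimension three only the weaker bound $K_{3}+O(\varepsilon^{2}|\ln\varepsilon|)$ survives, which is precisely the statement. The main technical nuisance is the careful bookkeeping of the Taylor remainders in each dimension, especially the borderline behaviour at $N=3$ where the natural "leading coefficient" ceases to be finite and one must accept a logarithmic loss rather than a clean $\varepsilon^{2}$ asymptotic.
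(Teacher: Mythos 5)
Your proposal is correct and follows essentially the same route as the paper: unfold the weight as $K(x',0)^{1-2_*/2}=e^{-|x'|^2/(4(N-2))}$, discard the cut-off at cost $O(\varepsilon^{N-1})$, rescale, split the resulting integral of $e^{-\varepsilon^2|y'|^2/(4(N-2))}-1$ at $|y'|=1/\varepsilon$, Taylor-expand on the inner ball to extract $-\gamma_N\varepsilon^2$ for $N\geq4$, and accept the logarithmic loss for $N=3$ where the coefficient integral diverges. A couple of your stated error orders are slightly off (the outer tail is $O(\varepsilon^{N-1})$, not $O(\varepsilon^N)$, and the quadratic remainder for $N\geq6$ is $O(\varepsilon^4)$, not $O(\varepsilon^{N-1})$), but all of these are $o(\varepsilon^2)$, so the argument is unaffected.
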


\begin{proof}
 For $N\geq4$, we have that
\begin{equation}\label{xx1}
  \begin{aligned}
{\|\tilde{U}_{{\varepsilon}}\|}_{{L}_{K}^{2_{*}}({\mathbb{R}}^{N-1})}^{2_{*}}
%&=\int_{\mathbb{R}^{N-1}}K(x{'},0)^{\frac{1}{2-N}}{\phi}^{2_{*}}U_{\varepsilon}^{2_{*}}dx{'}\\
&=\int_{\mathbb{R}^{N-1}}K(x{'},0)^{\frac{1}{2-N}}U^{2_{*}}_{\varepsilon}dx{'}+O({\varepsilon}^{N-1})\\
%&=K_{3}+k_{N}^{2_{*}}\int_{\mathbb{R}^{N-1}}\frac{K({\varepsilon}y{'},0)^{\frac{1}{2-N}}-1}
%{{{{(1+|y{'}|^{2}+|x_{N}^{0}|^{2})^{N-1}}}}}dy{'}+O({\varepsilon}^{N-1})\\
&=K_{3}+k_{N}^{2_{*}}\int_{\mathbb{R}^{N-1}}\frac{e^{-\frac{{\varepsilon}^{2}|y'|^{2}}{4(N-2)}}-1}
{{{{(1+|y{'}|^{2}+|x_{N}^{0}|^{2})^{N-1}}}}}dy{'}+O({\varepsilon}^{N-1}).
  \end{aligned}
\end{equation}
It is clear that
\begin{equation}\label{xx2}
  \begin{aligned}
& \ \ \ \ \int_{\mathbb{R}^{N-1}}\frac{ e^{-\frac{{\varepsilon}^{2}|y{'}|^{2}}{4(N-2)}}-1 }
{{{{(1+|y{'}|^{2}+|x_{N}^{0}|^{2})^{N-1}}}}}dy{'}
-\int_{\mathbb{R}^{N-1}}\frac{-\frac{\varepsilon^{2}|y{'}|^{2}}{4(N-2)}}
{{{{(1+|y{'}|^{2}+|x_{N}^{0}|^{2})^{N-1}}}}}dy{'}\\
&=\int_{\hat{B}_{1/\varepsilon}}\frac{ e^{-\frac{{\varepsilon}^{2}|y{'}|^{2}}{4(N-2)}}-1+\frac{\varepsilon^{2}|y{'}|^{2}}{4(N-2)} }
{{{{(1+|y{'}|^{2}+|x_{N}^{0}|^{2})^{N-1}}}}}dy{'}+\int_{\mathbb{R}^{N-1}\setminus \hat{B}_{1/\varepsilon}}\frac{ e^{-\frac{{\varepsilon}^{2}|y{'}|^{2}}{4(N-2)}}-1+\frac{\varepsilon^{2}|y{'}|^{2}}{4(N-2)} }
{{{{(1+|y{'}|^{2}+|x_{N}^{0}|^{2})^{N-1}}}}}dy{'},
  \end{aligned}
\end{equation}
where $\hat{B}_{r}=\hat{B}_{r}(0)\subset {\mathbb{R}}^{N-1}$ for any $r>0$ is a ball. From Taylor's formula, we have
\begin{equation}\label{t1}
e^{-\frac{{\varepsilon}^{2}|y{'}|^{2}}{4(N-2)}}-1=-\frac{{\varepsilon}^{2}|y{'}|^{2}}{4(N-2)}+O({\varepsilon}^{4}|y{'}|^4),\quad\quad
y{'}\in \hat{B}_{1/\varepsilon},
\end{equation}
%Hence,
and then
\begin{equation*}%\label{xx3}
  \begin{aligned}
\int_{\hat{B}_{1/\varepsilon}}\frac{ e^{-\frac{{\varepsilon}^{2}|y{'}|^{2}}{4(N-2)}}-1+\frac{\varepsilon^{2}|y{'}|^{2}}{4(N-2)} }
{{{{(1+|y{'}|^{2}+|x_{N}^{0}|^{2})^{N-1}}}}}dy{'}
&=O\Big(\varepsilon^4\int_{\hat{B}_{1/\varepsilon}}\frac{ |y{'}|^{4}}
{{{{(1+|y{'}|^{2}+|x_{N}^{0}|^{2})^{N-1}}}}}dy{'}\Big)\\
&=O\Big(\varepsilon^4\int_{\hat{B}_{1/\varepsilon}\setminus \hat{B}_{1}}\frac{ |y{'}|^{4}}
{{{{(1+|y{'}|^{2}+|x_{N}^{0}|^{2})^{N-1}}}}}dy{'}\Big)
+O(\varepsilon^4).
  \end{aligned}
  \end{equation*}
Since
\begin{equation*}
  \begin{aligned}
\int_{\hat{B}_{1/\varepsilon}\setminus \hat{B}_{1}}\frac{ |y{'}|^{4}}
{{{{(1+|y{'}|^{2}+|x_{N}^{0}|^{2})^{N-1}}}}}dy{'}
=O\Big(\int_{\hat{B}_{1/\varepsilon}\setminus \hat{B}_{1}}\frac{ 1}
{|y{'}|^{2N-6}}dy{'}\Big)
=O\Big(\int_{1}^{{{1}/\varepsilon}}r^{-N+4}dr\Big),
  \end{aligned}
\end{equation*}
one has
\begin{equation}\label{xx4}
\int_{\hat{B}_{1/\varepsilon}}\frac{ e^{-\frac{{\varepsilon}^{2}|y{'}|^{2}}{4(N-2)}}-1+\frac{\varepsilon^{2}|y{'}|^{2}}{4(N-2)} }
{{{{(1+|y{'}|^{2}+|x_{N}^{0}|^{2})^{N-1}}}}}dy{'}=o(\varepsilon^2).
\end{equation}
Moreover, there holds
\begin{equation}\label{xx5}
  \begin{aligned}
& \ \ \ \ \int_{\mathbb{R}^{N-1}\setminus \hat{B}_{1/\varepsilon}}\frac{ e^{-\frac{{\varepsilon}^{2}|y{'}|^{2}}{4(N-2)}}-1+\frac{\varepsilon^{2}|y{'}|^{2}}{4(N-2)} }
{{{{(1+|y{'}|^{2}+|x_{N}^{0}|^{2})^{N-1}}}}}dy{'}\\
&=O\Big(\int_{\mathbb{R}^{N-1}\setminus \hat{B}_{1/\varepsilon}}\frac{ 1}
{|y{'}|^{2N-2}}dy{'}\Big)+
O\Big(\varepsilon^2\int_{\mathbb{R}^{N-1}\setminus \hat{B}_{1/\varepsilon}}\frac{ 1}
{|y{'}|^{2N-4}}dy{'}\Big)\\
&=O\Big(\int_{{{1}/\varepsilon}}^{\infty}r^{-N}dr\Big)+O\Big(\varepsilon^2\int_{{{1}/\varepsilon}}^{\infty}r^{-N+2}dr\Big)\\
&=O(\varepsilon^{N-1}).
  \end{aligned}
\end{equation}
We conclude from \eqref{xx2}, \eqref{xx4} and \eqref{xx5} that for $N\geq4$,
\begin{equation}\label{xx7}
  \begin{aligned}
& \ \ \ \ \int_{\mathbb{R}^{N-1}}\frac{e^{-\frac{{\varepsilon}^{2}|y'|^{2}}{4(N-2)}}-1}
{{{{(1+|y{'}|^{2}+|x_{N}^{0}|^{2})^{N-1}}}}}dy{'}\\
&=-\frac{\varepsilon^{2}}{4(N-2)}\int_{\mathbb{R}^{N-1}}\frac{|y{'}|^{2}}
{{{{(1+|y{'}|^{2}+|x_{N}^{0}|^{2})^{N-1}}}}}dy{'}+o({\varepsilon}^{2}).
  \end{aligned}
\end{equation}
Combining \eqref{xx1} with \eqref{xx7} we have  that
\begin{equation}\label{xx8}
{\|\tilde{U}_{{\varepsilon}}\|}_{{L}_{K}^{2_{*}}({\mathbb{R}}^{N-1})}^{2_{*}}
=K_{3}- {\varepsilon}^{2}\frac{k_{N}^{2_{*}}}{4(N-2)}\int_{\mathbb{R}^{N-1}}\frac{|y{'}|^{2}}
{{{{(1+|y{'}|^{2}+|x_{N}^{0}|^{2})^{N-1}}}}}dy{'}+o({\varepsilon}^{2}).
\end{equation}
%Substituting the above into \eqref{3.017}, then
%\begin{equation}\label{3.018}
%{\|\tilde{U}_{{\varepsilon}}\|}_{{L}_{K}^{2_{*}}({\mathbb{R}}^{N-1})}^{2_{*}}
%=K_{3}-{\varepsilon}^{2}\frac{k_{N}^{2_{*}}}{4(N-2)}C_{3,N}+o({\varepsilon}^{3})+O({\varepsilon}^{N-1}),
%\end{equation}
%where
%\begin{equation*}\label{a5}
%C_{3,N}=\int_{\mathbb{R}^{N-1}}\frac{|y{'}|^{2}}
%{{{{(1+|y{'}|^{2}+|x_{N}^{0}|^{2})^{N-1}}}}}dy{'}.
%\end{equation*}

For $N=3$, we conclude that
\begin{equation}\label{q1}
  \begin{aligned}
{\|\tilde{U}_{{\varepsilon}}\|}_{{L}_{K}^{4}({\mathbb{R}}^{2})}^{4}
&=\int_{\mathbb{R}^{2}}K(x{'},0)^{{-1}}{\phi}^{4}U^{4}_{\varepsilon}dx{'}\\
&=\int_{\hat{B}_{2}}K(x{'},0)^{{-1}}U^{4}_{\varepsilon}dx{'}+
\int_{\hat{B}_{2}\setminus{\hat{B}_{1}}}K(x{'},0)^{{-1}}\big({\phi}^{4}-1\big)U^{4}_{\varepsilon}dx{'}\\
&=\int_{\hat{B}_{2}}K(x{'},0)^{{-1}}U^{4}_{\varepsilon}dx{'}+O({\varepsilon}^2)\\
&=\int_{\hat{B}_{2}}U^{4}_{\varepsilon}dx{'}
+\int_{\hat{B}_{2}}\big(K(x{'},0)^{{-1}}-1\big)U^{4}_{\varepsilon}dx{'}+O({\varepsilon}^2)\\
&=K_{3}+k_{3}^{4}\int_{\hat{B}_{{2}/{\varepsilon}}}\frac{e^{-\frac{{\varepsilon}^{2}|y'|^{2}}{4}}-1}
{{{{(|y{'}|^{2}+4)^{2}}}}}dy{'}+O({\varepsilon}^{2}).
  \end{aligned}
\end{equation}
Similar as (\ref{t1}), applying  Taylor's formula, we obtain
%\begin{equation*}
%e^{-\frac{{\varepsilon}^{2}|y{'}|^{2}}{4}}-1=-\frac{{\varepsilon}^{2}|y{'}|^{2}}{4}+O({\varepsilon}^{4}|y{'}|^4),\quad\quad
%y{'}\in \hat{B}_{2/\varepsilon},\notag
%\end{equation*}
%and then
\begin{equation}\label{q02}
  \begin{aligned}
\int_{\hat{B}_{2/\varepsilon}}\frac{ e^{-\frac{{\varepsilon}^{2}|y{'}|^{2}}{4}}-1}
{{{{(|y{'}|^{2}+4)^{2}}}}}dy{'}
&=-\frac{\varepsilon^{2}}{4}\int_{\hat{B}_{2/\varepsilon}}\frac{|y{'}|^{2}}
{(|y{'}|^{2}+4)^{2}}dy{'}
+O\Big(\varepsilon^4\int_{\hat{B}_{2/\varepsilon}}\frac{ |y{'}|^{4}}
{(|y{'}|^{2}+4)^{2}}dy{'}\Big)\\
&=-c_1\varepsilon^{2}-\frac{\varepsilon^{2}}{4}\int_{\hat{B}_{2/\varepsilon}\setminus \hat{B}_{1}}\frac{|y{'}|^{2}}
{(|y{'}|^{2}+4)^{2}}dy{'}+O(\varepsilon^4)
\\
&\quad+O\Big(\varepsilon^4\int_{\hat{B}_{2/\varepsilon}\setminus \hat{B}_{1}}\frac{ |y{'}|^{4}}
{(|y{'}|^{2}+4)^{2}}dy{'}\Big),
  \end{aligned}
  \end{equation}
where $c_1$ is a positive constant. Note that
\begin{equation*}
\int_{\hat{B}_{2/\varepsilon}\setminus \hat{B}_{1}}\frac{ |y{'}|^{2}}
{(|y{'}|^{2}+4)^{2}}dy{'}
=O\Big(\int_{\hat{B}_{2/\varepsilon}\setminus \hat{B}_{1}}\frac{ 1}
{|y{'}|^{2}}dy{'}\Big)
=O\Big(\int_{1}^{{{2}/\varepsilon}}r^{-1}dr\Big)
=O(|\ln\varepsilon|)
\end{equation*}
and
\begin{equation*}
\int_{\hat{B}_{2/\varepsilon}\setminus \hat{B}_{1}}\frac{ |y{'}|^{4}}
{(|y{'}|^{2}+4)^{2}}dy{'}
=O\Big(\int_{\hat{B}_{2/\varepsilon}\setminus \hat{B}_{1}}dy{'}\Big)
=O(\varepsilon^{-2}).
\end{equation*}
It follows from \eqref{q02} that
\begin{equation}\label{q5}
\int_{\hat{B}_{2/\varepsilon}}\frac{ e^{-\frac{{\varepsilon}^{2}|y{'}|^{2}}{4}}-1}
{{{{(|y{'}|^{2}+4)^{2}}}}}dy{'}=O(\varepsilon^2|\ln\varepsilon|).
\end{equation}
We deduce from \eqref{q1} and \eqref{q5} that for $N=3$,
\begin{equation}\label{q4}
{\|\tilde{U}_{{\varepsilon}}\|}_{L^{{2}_{*}}_{K}({\mathbb{R}}^{N-1})}^{2_{*}}
=K_{3}+O({\varepsilon}^2|\ln\varepsilon|).
\end{equation}
Hence, the proof is completed from \eqref{xx8} and \eqref{q4}.
\end{proof}

\begin{Lem}\label{lem3.6}
Let $N\geq3$, $p\in (2, 2^{*})$ and $\theta _{N}:=N-\frac{(N-2)p}{2}$. Then we have 
\begin{equation}\label{le0}
{\|\tilde{U}_{{\varepsilon}}\|}_{L^{p}_{K}({\mathbb{R}}^{N})}^{p}
\geq
\begin{cases}
b_1{\varepsilon}^{\theta_N}+o({\varepsilon}^{\theta_N}),~&~~N\geq4,~2<p<2^*,\\[1mm]
b_2{\varepsilon}^{3-\frac{p}{2}}+o({\varepsilon}^{3-\frac{p}{2}}),~&~~N=3,~3<p<6,\\[1mm]
b_3{\varepsilon}^{\frac{3}{2}}|\ln{\varepsilon}|+O({\varepsilon}^{\frac{3}{2}}),~&~~N=3,~p=3,\\[1mm]
b_4{\varepsilon}^{\frac{p}{2}}+o({\varepsilon}^{\frac{p}{2}}),~&~~N=3,~2<p<3,\\[1mm]
\end{cases}
\end{equation}
where $\varepsilon>0$ is sufficiently small, $b_{1}, b_2, b_3, b_4$ are positive constants independent of $\varepsilon$.
\end{Lem}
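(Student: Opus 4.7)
The plan is to produce each lower bound by restricting the integration to the half-ball $B_1^+$, where the cutoff $\phi$ is identically $1$ and the weight $K(x)=e^{|x|^2/4}$ is pinched between $1$ and $e^{1/4}$, and then rescaling so that the precise rate in $\varepsilon$ is read off from the convergence properties of the resulting integral. The four cases in \eqref{le0} will arise naturally from a trichotomy depending on whether $(N-2)p$ is bigger than, equal to, or smaller than $N$.

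First I would write
\begin{equation*}
\|\tilde U_{\varepsilon}\|_{L^{p}_{K}(\mathbb{R}^{N}_{+})}^{p}
=\int_{\mathbb{R}^{N}_{+}}K(x)^{1-p/2}\phi(x)^{p}U_{\varepsilon}(x)^{p}\,dx
\ge c_{p}\int_{B_{1}^{+}}U_{\varepsilon}(x)^{p}\,dx,
\end{equation*}
with the explicit positive constant $c_{p}:=e^{(1-p/2)/4}$, using that $p>2$ and $|x|\le 1$ on $B_{1}^{+}$. Substituting the explicit form of $U_{\varepsilon}$ and making the change of variables $y=x/\varepsilon$ then gives
\begin{equation*}
\int_{B_{1}^{+}}U_{\varepsilon}^{p}\,dx
= k_{N}^{p}\,\varepsilon^{\theta_{N}}\int_{B_{1/\varepsilon}^{+}}\frac{dy}{\bigl(1+|y'|^{2}+|y_{N}+x_{N}^{0}|^{2}\bigr)^{(N-2)p/2}},
\end{equation*}
so the whole problem reduces to the $\varepsilon\to 0^+$ asymptotics of this last integral. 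Since $x_{N}^{0}$ is a fixed constant, the two-sided bound $(1+|y|^{2})/C\le 1+|y'|^{2}+|y_{N}+x_{N}^{0}|^{2}\le C(1+|y|^{2})$ lets me replace the shifted weight by $(1+|y|^{2})^{(N-2)p/2}$ up to positive multiplicative constants, and the asymptotic question becomes purely the integrability of $|y|^{-(N-2)p}$ at infinity.

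Now I would split into three regimes. If $(N-2)p>N$, which is automatic for $N\ge 4$ and $p>2$ and holds for $N=3$ exactly when $p>3$, the integral converges to a positive constant as $\varepsilon\to 0$, producing the first two cases of \eqref{le0} with rate $\varepsilon^{\theta_{N}}$. If $(N-2)p=N$, i.e.\ $N=3$ and $p=3$, then after isolating $B_{1}^{+}$ the exterior piece in polar coordinates contributes
$$\int_{B_{1/\varepsilon}^{+}\setminus B_{1}^{+}}\frac{dy}{(1+|y|^{2})^{3/2}}\asymp\int_{1}^{1/\varepsilon}r^{-1}dr=|\ln\varepsilon|+O(1),$$
which together with $\theta_{3}=3/2$ yields $b_{3}\varepsilon^{3/2}|\ln\varepsilon|+O(\varepsilon^{3/2})$. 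If $(N-2)p<N$, i.e.\ $N=3$ and $2<p<3$, the same polar-coordinate computation gives an exterior contribution of order $\int_{1}^{1/\varepsilon}r^{2-p}dr\asymp\varepsilon^{p-3}/(3-p)$, and after multiplying by $\varepsilon^{\theta_{3}}=\varepsilon^{3-p/2}$ I obtain $b_{4}\varepsilon^{p/2}+o(\varepsilon^{p/2})$, as required.

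The main obstacle is purely bookkeeping: extracting honest positive constants $b_{1},b_{2},b_{3},b_{4}$ from the exterior polar integrals while controlling the contribution from the bounded part and handling the half-space solid-angle factor carefully, together with checking that the shift $y_{N}\mapsto y_{N}+x_{N}^{0}$ only perturbs the leading term by a lower-order remainder. In the borderline case $N=3$, $p=3$, one must also verify that the $O(\varepsilon^{3/2})$ error coming from the inner part and from the two-sided comparison of weights is genuinely dominated by the $\varepsilon^{3/2}|\ln\varepsilon|$ divergence, which is immediate once $\varepsilon$ is taken small enough.
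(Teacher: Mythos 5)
Your proposal is correct and follows essentially the same route as the paper: restrict to $B_1^+$ where $\phi\equiv 1$, pin the factor $K(x)^{1-p/2}$ from below by a positive constant, rescale $y=x/\varepsilon$ to reduce the problem to the $\varepsilon\to 0$ asymptotics of $\int_{B_{1/\varepsilon}^+}(1+|y'|^2+|y_N+x_N^0|^2)^{-(N-2)p/2}dy$, and split on the sign of $(N-2)p-N$ via polar coordinates on the annulus. The only cosmetic difference is that you pass through a two-sided comparison with $(1+|y|^2)$, whereas the paper works directly with the one-sided lower bound $\ge d\,|y|^{-(N-2)p}$ on $B_{1/\varepsilon}^+\setminus B_1^+$, which is slightly leaner since only a lower bound is required.
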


\begin{proof}
Since $p\in (2, 2^{*})$, one has $K(x)^{1-\frac{p}{2}}\geq e^{-\frac{p-2}{2}}>0$ for each $|x|\leq 2$.
From the definition of ${\phi}$, we calculate
\begin{equation*}%\label{c1}
\begin{aligned}
{\|\tilde{U}_{{\varepsilon}}\|}_{L^{p}_{K}({\mathbb{R}}^{N}_{+})}^{p}
&=\int_{\mathbb{R}^{N}_+}K(x)\tilde{U}_{{\varepsilon}}^{p}dx
=\int_{\mathbb{R}^{N}_+}\frac{K(x)^{1-\frac{p}{2}}\phi(x)^{p}k_{N}^{p}\varepsilon^{\frac{(N-2)p}{2}}}{(\varepsilon^{2}+|x'|^{2}+|x_{N}+\varepsilon x_{N}^{0}|^{2})^{\frac{(N-2)p}{2}}}
dx\\
&\geq e^{-\frac{p-2}{2}} \int_{B_{2}^{+}} \frac{\phi(x)^{p}k_{N}^{p}\varepsilon^{\frac{(N-2)p}{2}}}{(\varepsilon^{2}+|x'|^{2}+|x_{N}+\varepsilon x_{N}^{0}|^{2})^{\frac{(N-2)p}{2}}} dx \\
&\geq e^{-\frac{p-2}{2}} k_{N}^{p}\varepsilon^{\frac{(N-2)p}{2}}\int_{B_{1}^{+}} \frac{1}{(\varepsilon^{2}+|x'|^{2}+|x_{N}+\varepsilon x_{N}^{0}|^{2})^{\frac{(N-2)p}{2}}} dx\\
&=e^{-\frac{p-2}{2}} k_{N}^{p}\varepsilon^{\theta _{N}}\int_{B_{1/\varepsilon}^{+}} \frac{1}{(1+|y'|^{2}+|y_{N}+ x_{N}^{0}|^{2})^{\frac{(N-2)p}{2}}} dy\\
&=e^{-\frac{p-2}{2}} k_{N}^{p}\varepsilon^{\theta _{N}}\int_{B_{1}^{+}} \frac{1}{(1+|y'|^{2}+|y_{N}+ x_{N}^{0}|^{2})^{\frac{(N-2)p}{2}}} dy\\
&\ \ \ \ +e^{-\frac{p-2}{2}} k_{N}^{p}\varepsilon^{\theta _{N}}\int_{B_{1/\varepsilon}^{+} \setminus B_{1}^{+}} \frac{1}{(1+|y'|^{2}+|y_{N}+ x_{N}^{0}|^{2})^{\frac{(N-2)p}{2}}} dy,
\end{aligned}
\end{equation*}	
where $\theta _{N}=N-\frac{(N-2)p}{2}$.
Note that there exists a positive constant $d>0$ such that
\begin{equation*}%\label{cc6}
\frac{1}{(1+|y'|^{2}+|y_{N}+ x_{N}^{0}|^{2})^{\frac{(N-2)p}{2}}}\geq
\frac{d}{|y|^{(N-2)p}}, \ \ y\in B_{1/\varepsilon}^{+} \setminus B_{1}^{+}.
\end{equation*}
If $p=\frac{N}{N-2}$, there holds
\begin{equation}\label{le1}
	\begin{aligned}
	{\|\tilde{U}_{{\varepsilon}}\|}_{L^{p}_{K}({\mathbb{R}}^{N}_{+})}^{p}
	&\geq d_{1}\varepsilon^{\theta_{N}}+
	d_{2}\varepsilon^{\theta_{N}}\int_{B_{1/\varepsilon}^{+} \setminus B_{1}^{+}} \frac{1}{(1+|y'|^{2}+|y_{N}+ x_{N}^{0}|^{2})^{\frac{(N-2)p}{2}}} dy\\
	&\geq d_{1}\varepsilon^{\theta_{N}}+
	d_{2}\varepsilon^{\theta_{N}}d \int_{B_{1/\varepsilon}^{+} \setminus B_{1}^{+}} \frac{1}{|y|^{(N-2)p}}dy\\
	&= d_{1}\varepsilon^{\theta_{N}}+
	d_{2}\varepsilon^{\theta_{N}}\frac{d \omega_N}{2} \int_{1}^{{{1}/\varepsilon}}r^{-1}dr
	= d_{1}\varepsilon^{\frac{N}{2}}+ \bar{d}_{2} \varepsilon^{\frac{N}{2}}|\ln \varepsilon|,
	\end{aligned}
\end{equation}
where $d_{1}$, $d_{2}$, $\bar{d}_{2}>0$. If $p<\frac{N}{N-2}$, we have
\begin{equation}\label{le2}
	\begin{aligned}
		{\|\tilde{U}_{{\varepsilon}}\|}_{L^{p}_{K}({\mathbb{R}}^{N}_{+})}^{p}
		%&\geq d_{1}\varepsilon^{\theta_{N}}+d_{2}\varepsilon^{\theta_{N}}\int_{B_{1/\varepsilon}^{+} \setminus B_{1}^{+}} \frac{1}{(1+|y'|^{2}+|y_{N}+ x_{N}^{0}|^{2})^{\frac{(N-2)p}{2}}} dy\\
		&\geq d_{1}\varepsilon^{\theta_{N}}+
		d_{2}\varepsilon^{\theta_{N}}d \int_{B_{1/\varepsilon}^{+} \setminus B_{1}^{+}} \frac{1}{|y|^{(N-2)p}}dy\\
		&= d_{1}\varepsilon^{\theta_{N}}+
		d_{2}\varepsilon^{\theta_{N}}\frac{d \omega_N}{2} \int_{1}^{{{1}/\varepsilon}}r^{N-1-(N-2)p}dr\\
		&\geq d_{1}\varepsilon^{\theta_{N}}+ \frac{d d_{2} \omega_N}{2(N-(N-2)p)} \varepsilon^{\theta_{N}}(\varepsilon^{-N+(N-2)p} -1)\\
		&= d_{3}\varepsilon^{\frac{(N-2)p}{2}}+o(\varepsilon^{\frac{(N-2)p}{2}}),
	\end{aligned}
\end{equation}
where $d_{3}>0$. Similarly, if $p>\frac{N}{N-2}$, we obtain
\begin{equation}\label{le3}
	\begin{aligned}
		{\|\tilde{U}_{{\varepsilon}}\|}_{L^{p}_{K}({\mathbb{R}}^{N}_{+})}^{p}
		%&\geq d_{1}\varepsilon^{\theta_{N}}+d_{2}\varepsilon^{\theta_{N}}d \int_{B_{1/\varepsilon}^{+} \setminus B_{1}^{+}} \frac{1}{|y|^{\frac{(N-2)p}{2}}}dy\\
		&= d_{1}\varepsilon^{\theta_{N}}+
		d_{2}\varepsilon^{\theta_{N}}\frac{d \omega_N}{2} \int_{1}^{{{1}/\varepsilon}}r^{N-1-(N-2)p}dr\\
		&\geq d_{1}\varepsilon^{\theta_{N}}+ \frac{d d_{2} \omega_N}{2((N-2)p-N)} \varepsilon^{\theta_{N}}(1-\varepsilon^{-N+(N-2)p})\\
		&= d_{4}\varepsilon^{\theta_{N}}+o(\varepsilon^{\theta_{N}}),
	\end{aligned}
\end{equation}
where $d_{4}>0$.
From \eqref{le1}, \eqref{le2} and \eqref{le3}, one has
\begin{equation*}%\label{c3}
{\|\tilde{U}_{{\varepsilon}}\|}_{L^{p}_{K}({\mathbb{R}}^{N})}^{p}
\geq
\begin{cases}
d_{3}\varepsilon^{\frac{(N-2)p}{2}}+o(\varepsilon^{\frac{(N-2)p}{2}}),~&~~p<\frac{N}{N-2},\\[1mm]
\bar{d}_{2} \varepsilon^{\frac{N}{2}}|\ln \varepsilon| +d_{1}\varepsilon^{\frac{N}{2}} ,~&~~p=\frac{N}{N-2},\\[1mm]
d_{4}\varepsilon^{\theta_{N}}+o(\varepsilon^{\theta_{N}}), ~&~~p>\frac{N}{N-2},
\end{cases}
\end{equation*}
which implies that \eqref{le0} holds.
\end{proof}

Denote
\begin{align*}
K_{1}({\varepsilon}):&={\|\tilde{U}_{{\varepsilon}}\|}^2,\\
K_{2}({\varepsilon}):&={\|\tilde{U}_{{\varepsilon}}\|}_{L^{{2}^{*}}_{K}({\mathbb{R}}^{N}_{+})}^{2^{*}},\\
K_{3}({\varepsilon}):&={\|\tilde{U}_{{\varepsilon}}\|}_{L^{{2}_{*}}_{K}({\mathbb{R}}^{N-1})}^{2_{*}},\\
 %K_{4}({\varepsilon}):&={\|\tilde{U}_{{\varepsilon}}\|}_{L^{2}_{K}({\mathbb{R}}^{N}_{+})}^{2},\\
K_{4}({\varepsilon}):&={\|\tilde{U}_{{\varepsilon}}\|}_{L^{p}_{K}({\mathbb{R}}^{N})}^{p}.
\end{align*}
%\begin{equation}\label{3.021}
%K_{1}({\varepsilon}):=
%\begin{cases}
%K_{1}+\alpha_{N}{{\varepsilon}}^{2}+O({{\varepsilon}}^{3}),~&~~N\geq 5,\\[1mm]
%K_{1}+\frac{k_{4}^{2}\omega_4}{2}{{\varepsilon}}^{2}|\ln{\varepsilon}|+O({\varepsilon}^{2}),~&~~N=4,\\[1mm]
%K_1+O({\varepsilon}),~&~~N=3,\\[1mm]
%\end{cases}
%\end{equation}
%\begin{align}
%K_{2}({\varepsilon})&:=K_{2}-\beta_{N}{{\varepsilon}}^{2}+o({{\varepsilon}}^{3}),\label{3.022}\\
%K_{3}({\varepsilon})&:=K_{3}-\gamma_{N}{{\varepsilon}}^{2}+o({{\varepsilon}}^{3}),\label{3.023}
%\end{align}
%\begin{equation}\label{3.0021}
%K_{4}({\varepsilon}):=
%\begin{cases}
%d_{N}{{\varepsilon}}^{2}+O({{\varepsilon}}^{3}),~&~~N\geq5,\\[1mm]
%\frac{{k_{4}^{2}\omega_4}}{2}\varepsilon^2|\ln\varepsilon|+O({\varepsilon}^2),~&~~N=4,\\[1mm]
%O({\varepsilon}),~&~~N=3,\\[1mm]
%\end{cases}
%\end{equation}
%\begin{equation}\label{3.0024}
%\hat K_{4}({\varepsilon})
%:=
%\begin{cases}
%q_N{\varepsilon}^{\theta_N},~&~~N\geq4,~2<p<2^*,\\[1mm]
%q_3{\varepsilon}^{3-\frac{p}{2}},~&~~N=3,~3<p<6,\\[1mm]
%r_3{\varepsilon}^{\frac{3}{2}}|\ln{\varepsilon}|,~&~~N=3,~p=3,\\[1mm]
%s_3{\varepsilon}^{\frac{p}{2}},~&~~N=3,~2<p<3.\\[1mm]
%\end{cases}
%\end{equation}
Now, we are ready to prove \eqref{0}.

\begin{Lem}\label{lem3.8}  For any fixed $\lambda >0$, the inequality (\ref {0}) holds, and (\ref {2.8}) is naturally obtained,  if one of the following assumptions holds:
\vskip 0.2cm
	
	$( \romannumeral 1)$  $~N\geq4, 2<p<2^* \ \text{and}~ \mu>0$;
\vskip 0.2cm
	
	$( \romannumeral 2)$  $~N=3, 4<p<6 \
\text{and}~ \mu>0$;
	\vskip 0.2cm
	
	$( \romannumeral 3)$   $~N=3, 2<p\leq4 \ \text{and}~$ $\mu>0$~\text{sufficiently large}.
\end{Lem}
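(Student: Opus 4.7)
The overall strategy is to write $h_\varepsilon(t):=\tilde{I}_{\lambda,\mu}(t\tilde{U}_\varepsilon)$ explicitly as
\begin{equation*}
h_\varepsilon(t)=\frac{t^2}{2}K_1(\varepsilon)-\frac{t^{2^*}}{2^*}K_2(\varepsilon)-\frac{t^{2_*}}{2_*}K_3(\varepsilon)-\frac{\mu\lambda^{(N-2)(2-p)/4}}{p}t^p K_4(\varepsilon),
\end{equation*}
and observe that $h_\varepsilon(t)\to-\infty$ as $t\to\infty$ while $h_\varepsilon$ is positive near $0$, so the supremum is attained at some $t_\varepsilon>0$ with $h'_\varepsilon(t_\varepsilon)=0$. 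Denoting by $g_\varepsilon$ the same expression with the $\mu$-term removed, the trivial but decisive bound
\begin{equation*}
\sup_{t>0}h_\varepsilon(t)=h_\varepsilon(t_\varepsilon)\le\max_{t>0}g_\varepsilon(t)-\frac{\mu\lambda^{(N-2)(2-p)/4}}{p}t_\varepsilon^{\,p}K_4(\varepsilon)
\end{equation*}
reduces the problem to comparing the correction to $\max g_\varepsilon$ with the gain $\tfrac{\mu}{p}t_\varepsilon^{\,p}K_4(\varepsilon)$. Using the critical equation together with $K_4(\varepsilon)\to 0$, one checks that in cases (i) and (ii) the maximizer $t_\varepsilon$ stays in a compact subset of $(0,\infty)$ as $\varepsilon\to 0$.

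The key identity $K_1=K_2+K_3$ from \eqref{2000} gives $g_0(1)=A$, $g_0'(1)=0$ and $g_0''(1)=-(2^*-2)K_2-(2_*-2)K_3<0$, so a second-order Taylor expansion around $t=1$ yields
\begin{equation*}
\max_{t>0}g_\varepsilon(t)=A+\frac{\delta_1(\varepsilon)}{2}-\frac{\delta_2(\varepsilon)}{2^*}-\frac{\delta_3(\varepsilon)}{2_*}+O(|\delta(\varepsilon)|^2),\qquad \delta_i(\varepsilon):=K_i(\varepsilon)-K_i.
\end{equation*}
Inserting the expansions from Lemmas~\ref{lem3.1}, \ref{lem3.3}, \ref{lem3.4} and noting that $\delta_2,\delta_3\le 0$ in the relevant dimensions, the leading correction is a positive constant times $\varepsilon^2$ for $N\ge 5$, $\tfrac{k_4^2\omega_4}{4}\varepsilon^2|\ln\varepsilon|+O(\varepsilon^2)$ for $N=4$, and $O(\varepsilon)$ (with unsigned constant) for $N=3$.

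For cases (i) and (ii), I would combine this with the lower bound for $K_4(\varepsilon)$ from Lemma~\ref{lem3.6}. In case (i), $\theta_N=N-(N-2)p/2$ lies in $(0,2)$ for every $p\in(2,2^*)$, so the gain $\mu\,\varepsilon^{\theta_N}$ strictly dominates the $\varepsilon^2$-order correction as $\varepsilon\to 0$; when $N=4$, the relation $2-\theta_4=p-2>0$ absorbs the logarithmic factor. In case (ii), $N=3$ and $p\in(4,6)$ give $3-p/2<1$, so $\mu\varepsilon^{3-p/2}$ beats the $O(\varepsilon)$ correction. In either situation, choosing $\varepsilon$ small enough produces $\sup h_\varepsilon<A$.

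Case (iii) requires a complementary idea: fix any $\varepsilon_0>0$ and drop the two critical negative terms to obtain the cruder estimate
\begin{equation*}
\sup_{t>0}h_{\varepsilon_0}(t)\le\sup_{t>0}\Bigl(\frac{t^2}{2}K_1(\varepsilon_0)-\frac{\mu\lambda^{(N-2)(2-p)/4}}{p}t^p K_4(\varepsilon_0)\Bigr)=C(\varepsilon_0)\,\mu^{-2/(p-2)},
\end{equation*}
which tends to $0$ as $\mu\to\infty$ and is therefore strictly smaller than $A$ for all sufficiently large $\mu$. I expect the main obstacle to be the sign-unspecified $O(\varepsilon)$ correction to $\|\tilde{U}_\varepsilon\|^2$ in dimension $N=3$: it prevents the clean small-$\varepsilon$ argument from covering the whole range $p\in(2,2^*)$, and is precisely what dictates both the restriction $p>4$ in case (ii) and the large-$\mu$ workaround in case (iii).
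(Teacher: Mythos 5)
Your argument is correct, and for parts (i) and (ii) it is essentially the paper's proof: bound the full functional $h_\varepsilon$ at its maximizer $t_\varepsilon$ (which stays away from $0$ since $t_\varepsilon\to 1$) by $\sup_t f_\varepsilon(t)-\tfrac{\mu\lambda^{(N-2)(2-p)/4}}{p}a_1^pK_4(\varepsilon)$ with $f_\varepsilon$ the $\mu$-free part, expand $\sup_t f_\varepsilon$ to get $A+O(\varepsilon^2)$ ($N\ge5$), $A+O(\varepsilon^2|\ln\varepsilon|)$ ($N=4$), $A+O(\varepsilon)$ ($N=3$), and let the lower bound on $K_4(\varepsilon)$ from Lemma~\ref{lem3.6} win because $\theta_N\in(0,2)$, resp.\ $3-\tfrac p2\in(0,1)$. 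Where you genuinely diverge is case (iii): the paper couples the parameters by setting $\mu=\varepsilon^{-1/2}$ and then checks the three subcases $2<p<3$, $p=3$, $3<p\le4$ against the $O(\varepsilon)$ error, whereas you fix $\varepsilon_0$, discard the two nonnegative critical terms, and compute $\sup_{t>0}\bigl(\tfrac{t^2}{2}K_1(\varepsilon_0)-\tfrac{\mu\lambda^{(N-2)(2-p)/4}}{p}t^pK_4(\varepsilon_0)\bigr)=C(\varepsilon_0)\mu^{-2/(p-2)}\to0$ as $\mu\to\infty$. Your version is shorter, needs no subcase analysis and no control of the maximizer, and makes it transparent that the conclusion holds for \emph{all} sufficiently large $\mu$ rather than along the particular family $\mu=\varepsilon^{-1/2}$; its only cost is that it gives no quantitative threshold in terms of $\varepsilon$. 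One cosmetic remark: the positivity of the $\varepsilon^2$-coefficient for $N\ge5$ (it equals $\tfrac{\alpha_N}{2}+\tfrac{\beta_N}{2^*}+\tfrac{\gamma_N}{2_*}>0$) is true but irrelevant — only the order $O(\varepsilon^2)$ matters for the comparison with $\varepsilon^{\theta_N}$.
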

	
\begin{proof}
Define the function
\begin{equation*}
{g}_{{\varepsilon}}(t):=\frac{K_{1}({\varepsilon})}{2} t^{2}-\frac{\mu{\lambda^{\frac{(N-2)(2-p)}{4}}}K_{4}({\varepsilon})}{p} t^{p}
-\frac{K_{2}({\varepsilon})}{2^{*}}t^{2^{*}}-\frac{K_{3}({\varepsilon})}{2_{*}}t^{2_{*}},~t>0.\notag
\end{equation*}
%By Lemmas \ref{lem3.1}-\ref{lem3.4} and  Lemma \ref{lem3.6}, we get $\tilde{I}_{\lambda, \mu}(t\tilde{U}_{{\varepsilon}})<{g}_{{\varepsilon}}(t)$,
The inequality (\ref {0}) holds if we verify that
\begin{equation}\label{3.0025}
\sup\limits_{t>0}g_{{\varepsilon}}(t)<A.
\end{equation}
Let $\tilde{t_{\varepsilon}}>0$ be a constant such that ${g}_{{\varepsilon}}(t)$ attains its maximum. One has
\begin{equation*}
 K_{1}({\varepsilon})-\mu\lambda^{\frac{(N-2)(2-p)}{4}}
  K_{4}({\varepsilon})\tilde{t_{\varepsilon}}^{{p-2}}-
K_{2}({\varepsilon})\tilde{t_{\varepsilon}}^{{2^{*}-2}}-K_{3}({\varepsilon})\tilde{t_{\varepsilon}}^{{2_{*}}-2}=0.\notag
\end{equation*}
From Lemmas \ref{lem3.1}-\ref{lem3.6} and $K_{1}=K_{2}+K_{3}$, we have $\tilde{t_{\varepsilon}}\rightarrow 1$ as ${\varepsilon}\rightarrow 0$, which yields that there exists $a_{1}>0$, independent of ${\varepsilon}$, such that $\tilde{t_{\varepsilon}}\geq a_{1}$ for any ${\varepsilon}>0$ small enough. Therefore,
\begin{equation}\label{v1}
  \begin{aligned}
g_{{\varepsilon}}(\tilde{t_{\varepsilon}})
&\leq\sup\limits_{t>0}\Big( \frac{K_{1}({\varepsilon})}{2} t^{2}-
\frac{K_{2}({\varepsilon})}{2^{*}}t^{2^{*}}-\frac{K_{3}({\varepsilon})}{2_{*}}t^{2_{*}}\Big)-\frac{\mu{\lambda^{\frac{(N-2)(2-p)}{4}}      }}{p} K_{4}({\varepsilon})\tilde{t_{\varepsilon}}^{p}\\
&\leq\sup\limits_{t>0}\Big(\frac{K_{1}({\varepsilon})}{2} t^{2}-\frac{K_{2}({\varepsilon})}{2^{*}}t^{2^{*}}-\frac{K_{3}({\varepsilon})}{2_{*}}t^{2_{*}}\Big)-\frac{\mu{\lambda^{\frac{(N-2)(2-p)}{4}}      }a_{1}^{p}}{p}  K_{4}({\varepsilon}).
  \end{aligned}
\end{equation}
Let $t_{{\varepsilon}}$ be the positive constant such that
\begin{equation*}
f_{{\varepsilon}}(t_\varepsilon)
=\sup\limits_{t>0}f_{{\varepsilon}}(t),
\end{equation*}
where \begin{equation*}
f_{{\varepsilon}}(t)
:=\frac{K_{1}({\varepsilon})}{2} t^{2}-\frac{K_{2}({\varepsilon})}{2^{*}}t^{2^{*}}-\frac{K_{3}({\varepsilon})}{2_{*}}t^{2_{*}}.
\end{equation*}
Thus $t_{{\varepsilon}}$ satisfies
\begin{equation*}
K_{1}({\varepsilon})-
K_{2}({\varepsilon})t_\varepsilon^{{2^{*}-2}}-K_{3}({\varepsilon})t_\varepsilon^{{2_{*}}-2}=0.\notag
\end{equation*}
Noting that $2^{*}-2=2(2_{*}-2)$, we have
\begin{equation*}
t_{{\varepsilon}}^{2_{*}-2}=\frac{-K_{3}({\varepsilon})+\sqrt{K_{3}^{2}({\varepsilon})
+4K_{2}({\varepsilon})K_{1}({\varepsilon})}}{2K_{2}({\varepsilon})}.\notag
\end{equation*}

If $N\geq5$, we deduce from Lemmas \ref{lem3.1}-\ref{lem3.6} and $K_{1}=K_{2}+K_{3}$ that
\begin{equation*}
t_{{\varepsilon}}^{2_{*}-2}=\frac{2K_{2}+O({{\varepsilon}}^{2})}{2K_{2}+O({{\varepsilon}}^{2})}=1+O({{\varepsilon}}^{2}),\notag
\end{equation*}
which means that $\Delta{t_{{\varepsilon}}}:={t_{{\varepsilon}}}-1=O({{\varepsilon}}^{2})$. From Taylor's formula, one has that for any $s>1$,
\begin{equation*}\label{03}
t_{{\varepsilon}}^{s}=1+s\Delta{t_{{\varepsilon}}}+O({{\varepsilon}}^{4}).
\end{equation*}
Combining the above with \eqref{2000},
%\begin{equation*}
%A={\frac{{K_{1}}}{2}}-{\frac{K_{2} }{2^{*}}}-{\frac{K_{3} }{2_{*}}},
%~K_{1}-K_{2}-K_{3}=0,
%\end{equation*}
we obtain that
\begin{equation*}
  \begin{aligned}
f_{{\varepsilon}}({t}_{{\varepsilon}})
&=\frac{K_{1}({\varepsilon})}{2}{t}_{{\varepsilon}}^{2}
-\frac{K_{2}({\varepsilon})}{2^{*}}t_{{\varepsilon}}^{2^{*}}-
\frac{K_{3}({\varepsilon})}{2_{*}}t_{{\varepsilon}}^{2_{*}}\\
&=\frac{1}{2}(K_{1}+\alpha_{N}{{\varepsilon}}^{2})t_{{\varepsilon}}^{2}
-\frac{1}{2^{*}}(K_{2}-\beta_{N}{{\varepsilon}}^{2})t_{{\varepsilon}}^{2^{*}}-\frac{1}{2_{*}}(K_{3}-\gamma_{N}{{\varepsilon}}^{2})t_{{\varepsilon}}^{2_{*}}+
o({{\varepsilon}}^{2})\\
&=\frac{1}{2}(K_{1}+\alpha_{N}{{\varepsilon}}^{2})-\frac{1}{2^{*}}(K_{2}-\beta_{N}{{\varepsilon}}^{2})
-\frac{1}{2_{*}}(K_{3}-\gamma_{N}{{\varepsilon}}^{2})\\
& \ \ \ \ +(K_{1}-K_{2}-K_{3})\Delta{t_{{\varepsilon}}}+o({{\varepsilon}}^{2})\\
&=A+\Big(\frac{\alpha_{N}}{2}+\frac{\beta_{N}}{2^{*}}+\frac{\gamma_{N}}{2_{*}}\Big){{\varepsilon}}^{2}+o({{\varepsilon}}^{2}).
%&=A+O({{\varepsilon}}^{2}).\\
  \end{aligned}
\end{equation*}
That is, for $N\geq5$,
\begin{equation}\label{za1}
\sup\limits_{t>0}f_{{\varepsilon}}({t})=
A+O({\varepsilon}^{2})+o({{\varepsilon}}^{2}).
\end{equation}

If $N=4$, one has
\begin{equation*}
t_{{\varepsilon}}^{2_{*}-2}%=\frac{2K_{2}+O({{\varepsilon}}^{2}\ln{\varepsilon}|)}{2K_{2}+O({{\varepsilon}}^{2})}
=1+O({{\varepsilon}}^{2}|\ln{\varepsilon}|),\notag
\end{equation*}
which means that
\begin{equation}\label{za2}
\sup\limits_{t>0}f_{{\varepsilon}}({t})=
A+O({{\varepsilon}}^{2}|\ln{\varepsilon}|).
\end{equation}
%$\Delta{t_{{\varepsilon}}}:={t_{{\varepsilon}}}-1=O({{\varepsilon}}^{2}|\ln{\varepsilon}|)$. From Taylor's formula, for any $s>1$,
%\begin{equation}\label{03}
%t_{{\varepsilon}}^{s}=1+s\Delta{t_{{\varepsilon}}}+o({{\varepsilon}}^{4}|\ln{\varepsilon}|^2).
%\end{equation}
%Moreover, using \eqref{03} and
%\begin{equation*}
%A={\frac{{K_{1}}}{2}}-{\frac{K_{2} }{2^{*}}}-{\frac{K_{3} }{2_{*}}},
%~K_{1}-K_{2}-K_{3}=0,
%\end{equation*}
%we obtain that
%\begin{equation}\label{za1}
%  \begin{aligned}
%f_{{\varepsilon}}(t_{{\varepsilon}})
%&=\frac{1}{2}K_{1}({\varepsilon})t_{{\varepsilon}}^{2}
%-\frac{K_{2}({\varepsilon})}{2^{*}}t_{{\varepsilon}}^{2^{*}}-
%\frac{K_{3}({\varepsilon})}{2_{*}}t_{{\varepsilon}}^{2_{*}}\\
%&=\frac{1}{2}(K_{1}+\frac{k_{4}^{2}\omega_4}{2}{{\varepsilon}}^{2}|\ln{\varepsilon}|)t_{{\varepsilon}}^{2}
%-\frac{K_{2}}{2^{*}}t_{{\varepsilon}}^{2^{*}}-\frac {K_{3}}{2_{*}}t_{{\varepsilon}}^{2_{*}}+
%O({{\varepsilon}}^{2})\\
%&=\frac{1}{2}(K_{1}+\frac{k_{4}^{2}\omega_4}{2}{{\varepsilon}}^{2}|\ln{\varepsilon}|)-\frac{K_{2}}{2^{*}}
%-\frac{K_{3}}{2_{*}}\notag\\
%& \ \ \ \ +(K_{1}-K_{2}-K_{3})\Delta{t_{{\varepsilon}}}+O({{\varepsilon}}^{2})\\
%&=A+O({{\varepsilon}}^{2}|\ln{\varepsilon}|).
 % \end{aligned}
%\end{equation}
In view of \eqref{za1} and \eqref{za2}, we have that for $N\geq 4$,
\begin{equation}\label{v2}
\sup\limits_{t>0}f_{{\varepsilon}}({t})=
A+B_\varepsilon,
\end{equation}
where
\begin{equation*}	
B_\varepsilon=
	\begin{cases}
		O({\varepsilon}^{2}),~&~N\geq5 ,\\
O({{\varepsilon}}^{2}|{\ln}\varepsilon|),~&~N=4 .
%O({{\varepsilon}}),~&~N=3.
	\end{cases}
\end{equation*}

Similarly, we get that for $N=3$,
\begin{equation}\label{v4}
\sup\limits_{t>0}f_{{\varepsilon}}({t})=
A+O({{\varepsilon}}).
\end{equation}

Now, we are ready to verify (\ref {3.0025}).
For $N\ge 4$ and $2<p<2^{*}$, it follows from Lemma \ref{lem3.6}, \eqref{v1} and \eqref{v2} that for  $\varepsilon$ sufficiently small,
\begin{equation}\label{v3}
g_{{\varepsilon}}(\tilde{t_{\varepsilon}})\leq  %A+B_\varepsilon-\frac{\mu{\lambda^{\frac{(N-2)(2-p)}{4}}      }c_{1}^{p}}{p} \hat K_{4}({\varepsilon})=
A-\frac{\mu{\lambda^{\frac{(N-2)(2-p)}{4}}      } a_{1}^{p}b_{1}}{p}{{\varepsilon}}^{\theta _{N}}+o({{\varepsilon}}^{\theta _{N}})<A,
\end{equation}
because $\theta _{N}\in (0,2)$, $\lambda>0$ and $\mu>0$.
 For $N=3$, from Lemma \ref{lem3.6}, \eqref{v1} and \eqref{v4}, we proceed as follows:
   \begin{enumerate}
     \item If $4<p<6$, we conclude  that for any $\lambda, \mu>0$, $\varepsilon>0$ sufficiently small,
\begin{equation}\label{v8}
g_{{\varepsilon}}(\tilde{t_{\varepsilon}})\leq  A-\frac{\mu{\lambda^{\frac{(N-2)(2-p)}{4}}      } a_{1}^{p}b_{2}}{p}{{\varepsilon}}^{3-\frac{p}{2}}+o({{\varepsilon}}^{3-\frac{p}{2}})<A,
\end{equation}
since $3-{\frac{p}{2}}\in(0, 1)$.
     \item If $3<p\leq4$ and $\lambda>0$, we can take $\mu=\varepsilon^{-\frac{1}{2}}$ such that
\begin{equation}\label{v5}
g_{{\varepsilon}}(\tilde{t_{\varepsilon}})\leq  %A+O({\varepsilon})-\frac{\mu{\lambda^{\frac{(N-2)(2-p)}{4}}      } c_{1}^{p}Q_{3}}{p}{{\varepsilon}}^{3-\frac{p}{2}}=
A-\frac{{\lambda^{\frac{(N-2)(2-p)}{4}}      } a_{1}^{p}b_{2}}{p}{{\varepsilon}}^{\frac{5-p}{2}}+O({\varepsilon})<A,
\end{equation}
for small $\varepsilon>0$ since ${\frac{5-p}{2}}\in[\frac{1}{2}, 1)$.
\item  If $p=3$ and $\lambda>0$, by taking $\mu=\varepsilon^{-\frac 12}$ and $\varepsilon>0$ small enough, we get that
\begin{equation}\label{v6}
g_{{\varepsilon}}(\tilde{t_{\varepsilon}})\leq  %A+O({\varepsilon})-\frac{\mu{\lambda^{\frac{(N-2)(2-p)}{4}}      } c_{1}^{p}R_{3}}{p}{{\varepsilon}}^{\frac{3}{2}}|\ln \varepsilon|=
A-\frac{{\lambda^{\frac{(N-2)(2-p)}{4}}      } a_{1}^{p}b_{3}}{p}{{\varepsilon}}|\ln \varepsilon|+O({\varepsilon})<A.
\end{equation}

 \item   If $2<p<3$ and $\lambda>0$, by taking $\mu=\varepsilon^{-\frac{1}{2}}$ and $\varepsilon>0$ sufficiently small, we conclude
\begin{equation}\label{v7}
g_{{\varepsilon}}(\tilde{t_{\varepsilon}})\leq  %A+O({\varepsilon})-\frac{\mu{\lambda^{\frac{(N-2)(2-p)}{4}}      } c_{1}^{p}S_{3}}{p}{{\varepsilon}}^{\frac{p}{2}}=
A-\frac{{\lambda^{\frac{(N-2)(2-p)}{4}}      } a_{1}^{p}b_{4}}{p}{{\varepsilon}}^{\frac{p-1}{2}}+O({\varepsilon})<A,
\end{equation}
since $\frac{p-1}{2}\in (\frac{1}{2}, 1)$.
\end{enumerate}

Therefore, we conclude that \eqref{3.0025} holds for $\varepsilon>0$ sufficiently small from \eqref{v3}-\eqref{v7}.
\end{proof}

\begin{proof}[\bf{Proof of Theorem \ref{Th1.1}}]
From Lemmas \ref{lem2.3}-\ref{lem2.5}, Lemma \ref{lem3.8} and Mountain Pass Theorem, we get the existence of a nonnegative weak solution $u$ of \eqref{1.5}. 
Moreover, we can deduce that $u\in C^2(\overline{{{\mathbb{R}}^{N}_{+}}})$ from the Brezis-Kato Theorem and standard regularity theory for elliptic equations.
From maximum principle, $u$ is a positive solution of \eqref{1.5}, which means that $u$ is a positive solution of \eqref{1.1}.  The proof of Theorem \ref{Th1.1} is completed.
 \end{proof}

\vs{3mm}
\section{The proof of  Theorem   \ref{Th1.2}  }\label{S4}
\vs{2mm}

In this section, we prove the existence of multiple solutions of problem \eqref{1.1} by applying dual variational principle. To this end, we introduce some definitions and notations in the following.

\begin{Def}\label{def4.1}
Let $E$ be a Banach space. $B\subset E$ is called symmetric if $u\in B$ implies $-u\in B$. For a closed symmetric set $B$ which does not contain the origin, we define a genus $\nu(B)$ of $B$ by the smallest integer $k$ such that there exists an odd continuous mapping from $B$ to ${\mathbb{R}^{k}}\setminus \{0\}$. If there does not exist such a $k$, we define $\nu(B)=\infty$ and let~$\nu(\emptyset)=0$.
\end{Def}

Let $I\in C^1(E,\mathbb{R})$, $B_r$ be a ball in $E$ centered at $0$ with radius $r$, $\partial B_{r}$ be the boundary of $B_{r}$, and
\begin{eqnarray}
&&\Sigma:=\{B\subset E\setminus \{0\}: \ B~\mbox {is closed and symmetric} \},\notag\\
&&E_+:=\{u\in E: \ I(u)\geq  0\},\notag\\
&&H:=\{h: \ h\in C(E,E), h \mbox{ is an odd homeomorphism and}~h (B_1)\subset E_+\},\notag\\
&&\Gamma_{k}:=\{B\subset {\Sigma}: \ \mbox{B is compact},\nu(B\cap h({\partial {B_1}}))\geq k~\mbox{for any}~h\in H\}.\notag
\end{eqnarray}
Replacing $(PS)$ condition by $(PS)_c$ condition, we have the following Lemma proved exactly as in \cite{0}.

\begin{Lem}\label{lem4.2}
Assume $I\in C^1(E,\mathbb{R})$ satisfies the following properties:
\vskip 0.2cm

$(H_1)$  $I(0)=0, I(-u)=I(u)$ for all $u\in E$;
\vskip 0.2cm

$(H_2)$ {there exist}~${\alpha},\rho >0$~{such that}~$I(u)>0$ for any $u \in B_\rho\setminus \{0\}$, $I(u)\geq  \alpha$~{for all} ~$u\in {\partial {B_\rho}}$;
\vskip 0.2cm

$(H_3)$ for any finite dimensional subspace $E^{m}\subset E$, $E^{m}\cap E_+$ is bounded.
\vskip 0.2cm

For any $k=1, 2, \cdots,$ let
\begin{equation*}
b_k:=\inf\limits_{B\in \Gamma_k}{\sup\limits_{u\in B}}I(u),
\end{equation*}
then
\vskip 0.2cm
	
	$( \romannumeral 1)$  $\Gamma_k \neq\emptyset$ and $0<\alpha\leq b_k\leq b_{k+1} $;
\vskip 0.2cm
	
	$( \romannumeral 2)$  $b_k$ is a critical value if $I$ satisfies $(PS)_c$ condition for $c=b_k$.
\vskip 0.2cm
\noindent
Moreover, if $b=b_k=\cdots=b_{k+m}$, then $\nu(K_{b})\geq m+1$, where
$K_{b}=\{u\in E \ | \ I(u)=b, I'(u)=0 \}$.
\end{Lem}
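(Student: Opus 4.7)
The proof is a standard Lusternik--Schnirelmann argument via Krasnoselskii's genus, adapted so that only the local $(PS)_c$ condition at $c=b_k$ is needed in place of the global $(PS)$ used in \cite{0}. First, for the nonemptiness of $\Gamma_k$, I would pick a $k$-dimensional symmetric subspace $E^k\subset E$; by $(H_3)$ there exists $R>\rho$ with $I<0$ on $E^k\cap\partial B_R$, and a Borsuk--Ulam type intersection argument then shows that $B^*:=E^k\cap\partial B_R$ (or a suitable thickening of it) lies in $\Gamma_k$. The monotonicity $b_k\leq b_{k+1}$ is immediate from $\Gamma_{k+1}\subset\Gamma_k$. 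For the lower bound $b_k\geq\alpha$, the dilation $h_0(u):=\rho u$ is an odd homeomorphism of $E$ with $h_0(\overline{B_1})=\overline{B_\rho}\subset E_+$ by $(H_2)$, hence $h_0\in H$; any $B\in\Gamma_k$ then satisfies $\nu(B\cap\partial B_\rho)\geq k\geq 1$, so $B\cap\partial B_\rho\neq\emptyset$ and $\sup_B I\geq\inf_{\partial B_\rho}I\geq\alpha$, yielding $b_k\geq\alpha>0$.

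To show $b_k$ is critical under $(PS)_{b_k}$, I would argue by contradiction: if $K_{b_k}=\emptyset$, then an equivariant deformation lemma based on $(PS)_{b_k}$ and an odd locally Lipschitz pseudo-gradient vector field yields some $\epsilon\in(0,\alpha/2)$ and an odd homeomorphism $\eta:E\to E$, equal to the identity outside a neighborhood of $\{I=b_k\}$, satisfying $\eta(\{I\leq b_k+\epsilon\})\subset\{I\leq b_k-\epsilon\}$. Picking $B\in\Gamma_k$ with $\sup_B I<b_k+\epsilon$ and using that $\Gamma_k$ is invariant under such $\eta$ (because $\eta$ restricts to the identity on the portion of $E_+$ reached by elements of $H$, so that $\eta^{-1}\circ h\in H$ for every $h\in H$), one concludes $\eta(B)\in\Gamma_k$ with $\sup_{\eta(B)}I\leq b_k-\epsilon$, contradicting the definition of $b_k$.

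For the multiplicity statement, assume $b=b_k=\cdots=b_{k+m}$ and, for contradiction, $\nu(K_b)\leq m$; by continuity of the genus one can select a closed symmetric neighborhood $\mathcal N$ of $K_b$ with $\nu(\mathcal N)\leq m$. A refined equivariant deformation under $(PS)_b$ then provides $\epsilon>0$ and an odd homeomorphism $\eta$ satisfying $\eta(\{I\leq b+\epsilon\}\setminus\mathcal N)\subset\{I\leq b-\epsilon\}$. Taking $B\in\Gamma_{k+m}$ with $\sup_B I<b+\epsilon$, the subadditivity of the genus yields
$$\nu\bigl(\overline{B\setminus\mathcal N}\cap h(\partial B_1)\bigr)\geq\nu\bigl(B\cap h(\partial B_1)\bigr)-\nu(\mathcal N)\geq(k+m)-m=k$$
for every $h\in H$, so $\overline{B\setminus\mathcal N}\in\Gamma_k$; applying $\eta$ gives $\eta(\overline{B\setminus\mathcal N})\in\Gamma_k$ with $\sup I\leq b-\epsilon$, contradicting $b_k=b$.

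The main technical obstacle is the construction of the odd homeomorphism $\eta$ using only the $(PS)_c$ condition at the single level $c=b$ (rather than the global $(PS)$ condition assumed in \cite{0}): one must build an odd locally Lipschitz pseudo-gradient vector field on a small neighborhood of $\{I=b\}$, cut it off so that it vanishes both near $K_b$ and outside this neighborhood, and then verify that the resulting flow is an odd homeomorphism acting on sublevel sets as required. A closely related subtle point is the invariance of $\Gamma_k$ under $\eta$, which hinges on $\eta$ being the identity on a large enough portion of $E_+$ so as not to disturb the defining condition $h(B_1)\subset E_+$ of the admissible class $H$; the bounded support of the cut-off in $\eta$, together with the compactness of any $h(\overline{B_1})$ for fixed $h\in H$, is what makes this invariance work.
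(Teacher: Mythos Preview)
Your sketch is correct and is precisely the Ambrosetti--Rabinowitz argument from \cite{0} that the paper invokes; the paper itself gives no proof of this lemma, only the remark that it is ``proved exactly as in \cite{0}'' after replacing the global $(PS)$ condition by $(PS)_c$. One small slip: in your closing paragraph $h(\overline{B_1})$ is not compact in infinite dimensions, only closed and bounded, so the invariance of $\Gamma_k$ under $\eta$ should be argued from the fact that $\eta$ is the identity on $\{I\leq b_k-\epsilon_0\}\cup\{I\geq b_k+\epsilon_0\}$ for some $\epsilon_0<\alpha$, which already contains the image of $\partial B_\rho$ under any $h\in H$ and hence preserves the admissibility condition $h(B_1)\subset E_+$.
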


In what follows, we take $E=X$ and use the same notations $\Sigma$, $B_{r}$, $\partial B_{r}$ and $\nu(B)$. Denote
\begin{eqnarray*}
&&E_{\lambda, \mu}:=\{u\in X: \ J_{\lambda, \mu}(u)\geq  0\},\\
&&E_{*}:=\{u\in X: \ J_{*}(u)\geq  0\},\\
&&H_{\lambda, \mu}:=\{h: \ h\in C(X,X), h \mbox{ is an odd homeomorphism and}~h (B_1)\subset E_{\lambda, \mu}\},\\
&&H_{*}:=\{h: \ h\in C(X, X), h \mbox{ is an odd homeomorphism and}~h (B_1)\subset E_{*}\},
\end{eqnarray*}
where
\begin{eqnarray*}
&&J_{\lambda, \mu}(u):={\frac{1}{2}}{\|{u}\|}^{2}-{\frac{\mu }{p}}{\|u\|}_{{L_{K}^{p}}({\mathbb{R}}^{N}_{+})}^{p}-{\frac{\lambda }{2^{*}}}\|u\|_{L^{{2}^{*}}_{K}({\mathbb{R}}^{N}_{+})}^{2^{*}}-{\frac{\sqrt{\lambda} }{2_{*}}}\|u\|_{L^{{2}_{*}}_{K}({\mathbb{R}}^{N-1})}^{2_{*}},\\
&&J_{*}(u):={\frac{1}{2}}{\|{u}\|}^{2}-{\frac{\mu }{p}}{\|u\|}_{{L_{K}^{p}}({\mathbb{R}}^{N}_{+})}^{p}.
\end{eqnarray*}
Obviously, $E_{\lambda, \mu}\subset E_{*}$ and  $H_{\lambda, \mu}\subset H_{*}$.

\begin{Lem}\label{lem4.3}
If $N\geq 3$, $p\in (2,2^{*})$, $\lambda>0$ and $\mu>0$, then $J_{\lambda, \mu}(u)$ and $ J_{*}(u)$ satisfy the properties $(H_{1})$,  $(H_{2})$ and $(H_{3})$.
\end{Lem}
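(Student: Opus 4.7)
The verification of $(H_1)$ is immediate: both $J_{\lambda,\mu}$ and $J_*$ vanish at $0$, and every nonlinear term is of the form $\|u\|_Y^q$, hence even in $u$. For $(H_2)$, I would run exactly the argument used for item $(I_2)$ in Lemma~\ref{lem2.3}. Using the continuous embeddings $X \hookrightarrow L_K^p(\mathbb{R}^N_+)$, $X \hookrightarrow L_K^{2^*}(\mathbb{R}^N_+)$ and $X \hookrightarrow L_K^{2_*}(\mathbb{R}^{N-1})$, one obtains constants $C_1,C_2,C_3>0$ such that
\[
J_{\lambda,\mu}(u)\ge\tfrac{1}{2}\|u\|^{2}-\mu C_1\|u\|^{p}-\lambda C_2\|u\|^{2^{*}}-\sqrt{\lambda}C_3\|u\|^{2_{*}},\qquad J_{*}(u)\ge\tfrac{1}{2}\|u\|^{2}-\mu C_1\|u\|^{p}.
\]
Since $p, 2^*, 2_* > 2$, the right-hand sides are of the form $\frac{1}{2}\|u\|^2 - o(\|u\|^2)$ near the origin, so there exist $\rho>0$ small and $\alpha:=\rho^{\min\{p,2_*\}}\bigl(\tfrac{1}{2}\rho^{2-\min\{p,2_*\}}-C_4\bigr)>0$ (with $C_4=\mu C_1+\lambda C_2+\sqrt{\lambda}C_3$) such that $J_{\lambda,\mu}(u), J_*(u) \ge \alpha$ on $\partial B_\rho$ and are strictly positive on $B_\rho\setminus\{0\}$.

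For $(H_3)$, the key observation is that $E_{\lambda,\mu}\subset E_{*}$ (since $J_*(u)\ge J_{\lambda,\mu}(u)$, because the $2^*$ and $2_*$ terms are nonnegative with negative sign), so it suffices to bound $E_{*}\cap E^{m}$ for any finite-dimensional subspace $E^{m}\subset X$. Fix such $E^{m}$. All norms on $E^{m}$ are equivalent, so there exists $c_{m}>0$ with $\|u\|_{L^{p}_{K}(\mathbb{R}^{N}_{+})}\ge c_{m}\|u\|$ for every $u\in E^{m}$. If $u\in E_{*}\cap E^{m}$, then $J_{*}(u)\ge 0$ gives
\[
\tfrac{1}{2}\|u\|^{2}\ \ge\ \tfrac{\mu}{p}\|u\|^{p}_{L^{p}_{K}(\mathbb{R}^{N}_{+})}\ \ge\ \tfrac{\mu c_{m}^{p}}{p}\|u\|^{p},
\]
whence $\|u\|^{p-2}\le \tfrac{p}{2\mu c_{m}^{p}}$. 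Since $p>2$ and $\mu>0$, this bounds $\|u\|$ uniformly on $E_{*}\cap E^{m}$, and the inclusion $E_{\lambda,\mu}\cap E^{m}\subset E_{*}\cap E^{m}$ transfers the bound to $J_{\lambda,\mu}$.

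There is no real obstacle here: $(H_1)$ is free, $(H_2)$ is a textbook Sobolev-embedding computation that literally repeats part of Lemma~\ref{lem2.3}, and $(H_3)$ reduces to a one-line inequality on a finite-dimensional subspace once one notes $E_{\lambda,\mu}\subset E_{*}$. The only mildly subtle point worth emphasizing in the write-up is precisely this inclusion, since it lets us ignore the critical terms entirely when establishing the boundedness property $(H_3)$ for $J_{\lambda,\mu}$.
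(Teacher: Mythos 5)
Your proposal is correct and follows essentially the same route as the paper: $(H_1)$ and $(H_2)$ are verified by repeating the computation of Lemma~\ref{lem2.3}, and $(H_3)$ rests on the inclusion $E_{\lambda,\mu}\subset E_{*}$ (which the paper records just before the lemma) together with the equivalence of norms on a finite-dimensional subspace, so that the $p$-th power term dominates the quadratic term. The only cosmetic difference is that you give the $(H_3)$ bound directly, whereas the paper phrases it as a contradiction with an unbounded sequence; the substance is identical.
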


\begin{proof}
Similar to the proof of Lemma \ref{lem2.3}, we easily verify $(H_{1})$ and $(H_{2})$. Thus, we just need to prove that $(H_{3})$ holds for $J_{\lambda, \mu}(u)$. We prove it by contradiction. If there exists a finite dimensional subspace $E^{m}\subset X$ such that $E^{m}\cap E_{\lambda, \mu}$ is unbounded, that is, there exists a sequence $\{u_n\}\subset E^{m}\cap E_{\lambda, \mu}$ such that ${\|{u_n}\|} \rightarrow \infty$ as $n\rightarrow \infty$. Let $e_1, e_2, \cdots, e_m$ be the orthonormal basis of $E^{m}$. For any $n\in \mathbb{N}$, there exists $a^{n}=(a_1^n, a_2^n, \cdots, a_m^n)\in {\mathbb{R}}^{m}$ such that
\begin{equation*}
u_n=a_1^n e_1+a_2^n e_2+\cdots+a_m^n e_m.
\end{equation*}

Since ${\|{u_n}\|}=\Big(\sum\limits_{i=1}^m |a_i^n|^{2}\Big)^{\frac{1}{2}} \rightarrow \infty$ as $n\rightarrow \infty$, one has $a_n \rightarrow \infty$ as $n\rightarrow \infty$, where $a_n:=\max\limits_{1\leq i\leq m}|a_i^n|$. Clearly,
\begin{equation}\label{4.1}
{\|{u_n}\|}^{2}=O(|a_n|^2).
\end{equation}	
From the norm equivalence property of finite dimensional space, we infer that
there exist some constants $c_1$, $c_2>0$ such that
\begin{equation}\label{4.2}
{\|u_n\|}_{{L_{K}^{p}}({\mathbb{R}}^{N}_{+})}^{p}\geq c_1{\|{u_n}\|}^{p}\geq c_2|a_n|^p.
\end{equation}

Combining \eqref{4.1}, \eqref{4.2} with $p\in (2, 2^{*})$, $\lambda$, $\mu>0$, we deduce that for $n$ sufficiently large,
\begin{equation*}
J_{\lambda, \mu}(u_n)< J _{*}(u_n)\leq O(|a_n|^2)-{\frac{\mu c_2 }{p}}|a_n|^p<0,
\end{equation*}
which contradicts $u_n\in E_{\lambda, \mu}$. Thus, $(H_{3})$ holds for $J_{\lambda, \mu}(u), J_{*}(u)$.
\end{proof}
For any $k=1, 2, \cdots$, define
\begin{eqnarray*}
&&\Gamma_{\lambda, \mu}^{k}:=\{B\subset {\Sigma}: \ \mbox{B is compact},\nu(B\cap h({\partial {B_1}}))\geq k~\mbox{for any}~h\in H_{\lambda, \mu}\},\notag\\
&&\Gamma_{*}^{k}:=\{B\subset {\Sigma}: \ \mbox{B is compact},\nu(B\cap h({\partial {B_1}}))\geq k~\mbox{for any}~h\in H_{*}\}, \notag\\
&&c_{\lambda, \mu}^k:=\inf\limits_{B\in \Gamma_{\lambda, \mu}^{k}}{\sup\limits_{u\in B}}J_{\lambda, \mu}(u),~~
c_{*}^k:=\inf\limits_{B\in \Gamma_{*}^{k}}{\sup\limits_{u\in B}}J_{*}(u).
\end{eqnarray*}
It is clear to check that $\Gamma_{*}^{k}\subset\Gamma_{\lambda, \mu}^k$ for $k=1, 2, \cdots$. Moreover, we conclude $\Gamma_{*}^k \neq\emptyset$ and $0<\alpha\leq c_{*}^k<\infty$ from Lemma \ref{lem4.2}.

\begin{proof}[\bf{Proof of Theorem \ref{Th1.2}}]
For any $k=1, 2, \cdots$, by Lemma \ref{lem4.2} and the definitions of $c_{\lambda, \mu}^k$, $c_{*}^k$, we obtain that for any $j=1, 2, \cdots, k$,
%By Lemma \ref{lem4.2} and the definition of $c_{\lambda, \mu}^k$, $c_{*}^k$, we obtain that for any $j=1, 2, \cdots, k$,
\begin{equation}\label{4.3}
	\begin{aligned}
c_{\lambda, \mu}^j
&\leq c_{\lambda, \mu}^k
=\inf\limits_{B\in \Gamma_{\lambda, \mu}^{k}}{\sup\limits_{u\in B}}J_{\lambda, \mu}(u)\leq\inf\limits_{B\in \Gamma_{*}^{k}}{\sup\limits_{u\in B}}J_{\lambda, \mu}(u)\\
&\leq\inf\limits_{B\in \Gamma_{*}^{k}}{\sup\limits_{u\in B}}J_{*}(u)=c_{*}^k.
	\end{aligned}
\end{equation}

Next, we claim that for each $j=1, 2, \cdots, k$, $\lambda\in (0,\lambda_k)$, $J_{\lambda, \mu}$ satisfies $(PS)_c$ condition for $c=c_{\lambda, \mu}^j$. In fact, taking $\lambda_{k}:=(\frac{A}{c_*^k})^{\frac{2}{N-2}}$, from \eqref{2000} and \eqref{4.3}, we obtain
\begin{equation*}%\label{4.4}
c_{\lambda, \mu}^j\leq{c_*^k}=\lambda_{k}^{-\frac{N-2}{2}}A<
\lambda^{-\frac{N-2}{2}}A=A_\lambda.
\end{equation*}By Lemma \ref{lem2.5}, $J_{\lambda, \mu}$ satisfies the $(PS)_c$ condition for $c=c_{\lambda, \mu}^j$, $j=1, 2, \cdots, k$. From Lemmas \ref{lem4.2}-\ref{lem4.3}, $J_{\lambda, \mu}(u)$ has at least $k$ different critical points $u_{j}\in X$ such that $J_{\lambda, \mu}(u_{j})=c_{\lambda, \mu}^j$ with $j=1, 2, \cdots, k$. Since the functional $J_{\lambda, \mu}$ is even, $-u_{j}$ is also a critical point. Therefore, $\pm u_{j}$, $j=1, 2, \cdots, k$, are solutions of problem (\ref{1.5}), which means that $\pm u_{j}$ are solutions of problem (\ref{1.1}).
%By Lemma \ref{lem2.5}, $J_{\lambda, \mu}$ satisfies $(PS)_c$ condition for $c=c_{\lambda, \mu}^j$, $j=1, 2, \cdots, k$. From Lemma \ref{lem4.2} and Lemma \ref{lem4.3}, $J_{\lambda, \mu}(u)$ has at least $k$ different critical points $u_{j}\in X$ such that $J_{\lambda, \mu}(u_{j})=c_{\lambda, \mu}^j$ with $j=1, 2, \cdots, k$. Since $J_{\lambda, \mu}(u_{j})$ is even functional, $-u_{j}$ also is a critical point. 
We complete the proof of this Theorem.
\end{proof}

%\vspace{1mm}

%\vs{2mm}
%\noindent\textbf{Acknowledgments}
%Yinbin Deng was supported by NSFC grants (No. 11931012).

%\vs{3mm}


\begin{thebibliography}{35}
\vs{2mm}
{\footnotesize
	
{
\bibitem{0}
A. Ambrosetti, P. H. Rabinowitz, Dual variational methods in critical point theory and applications. J. Funct. Anal. 14 (1973) 349--381.

%\bibitem{1}
%M. F. de Almeida, L. C. F. Ferreira, J. C. Precioso, On the heat equation with nonlinearity and singular anisotropic potential on the boundary. Potential Anal. 46 (2017) 589--608.

\bibitem{03}
D. G. Aronson, H. F. Weinberger, Multidimensional nonlinear diffusion arising in population genetics. Adv. Math. 30 (1978) 33--76.

\bibitem{17}
W. Beckner, Sharp Sobolev inequalities on the sphere and the Moser-Trudinger inequality. Math. Ann. 138 (1993) 213--242.


\bibitem{12}
H. Brezis, E. Lieb, A Relation between pointwise convergence of functions and convergence of functionals. Proc. Amer. Math. Soc. 88 (1983) 486--490.

\bibitem{0002}
H. Brezis, L. Nirenberg, Positive solutions of nonlinear elliptic equations involving critical Sobolev exponents. Comm. Pure Appl. Math.
36 (1983) 437--477.

%\bibitem{19}
%X. Cabr\'{e}, J. Sol\`{a}-Morales, Layer solutions in a half-space for boundary reactions. Comm. Pure Appl. Math. 58 (2005), 1678--1732.

%\bibitem{20}
%M. Chipot, M. Chleb\'{\i}k, M. Fila, I. Shafrir, Existence of positive solutions of a semilinear elliptic equation in ${\mathbb{R}_{+}^{n}}$ with a nonlinear boundary condition. J. Math. Anal. Appl. 223 (1998), 429--471.

\bibitem{013}
F. Catrina, M. Furtado, M. Montenegro, Positive solutions for nonlinear elliptic equations with fast increasing weights. Proc. Roy. Soc. Edinb. Sect. A 137 (2007) 1157--1178.


\bibitem{24}
J. Chabrowski, On the nonlinear Neumann problem involving the critical Sobolev exponent on the boundary. J. Math. Anal. Appl. 290 (2004) 605--619.

\bibitem{CG2021}
L. Chen, S. Gao, Uniqueness of self-similar solutions to flows by quotient curvatures. Math. Nachr. 294 (2021) 1850--1858.

\bibitem{22}
M. Chipot, I. Shafrir, M. Fila, On the solutions to some elliptic equations with nonlinear Neumann boundary conditions. Adv. Differ. Equ. 1 (1996) 91--110.


\bibitem{8}
Y. Deng, X. Wang, S. Wu, Neumann problem of elliptic equations with limit nonliniearity in boundary condition. Chinese Ann Math. 15B (1994) 299--310.


\bibitem{04}
J. I. Diaz, Nonlinear Partial Differential Equations and Free Boundaries, Vol. I. Elliptic Equations, in: Res. Notes Math. vol. 106, Pitman, Boston, MA, 1985.


\bibitem{01}
J. F. Escobar, Uniqueness theorems on conformal deformation metrics, Sobolev inequalities, and an eigenvalue estimate. Comm. Pure Appl. Math. 43 (1990) 857--883.

\bibitem{9}
J. F. Escobar, Conformal deformation of a Riemannian metric to a scalsr flat metric with constant mean curvature on the boundary. Math. Ann. 136 (1992) 1--50.

\bibitem{18}
J. F. Escobar, Sharp constant in a Sobolev trace inequality. Indiana Univ. Math. J. 37 (1988) 687--698.

\bibitem{29}
M. Escobedo, O. Kavian, Variational problems related to self-similar solutions of the heat equation. Nonlinear Anal. 11 (1987) 1103--1133.




\bibitem{10}
L. C. F. Ferreira, M. F. Furtado, E. S. Medeiros, Existence and multiplicity of self-similar solutions for heat equations with nonlinear boundary conditions. Calc. Var. Partial Differ. Equ. 54 (2015) 4065--4078.

\bibitem{11}
L. C. F. Ferreira, M. F. Furtado, E. S. Medeiros, J. P. P. da Silva, On a weighted trace embedding and application critical boundary problems. Math. Nachr. 294 (2021) 877--899.

\bibitem{FP2010}
M. Filippakis, N. S. Papageorgiou, Multiple nontrivial solutions for resonant Neumann problems. Math. Nachr. 283 (2010) 1000--1014.

\bibitem{022}
M. F. Furtado, J. P. P. da Silva, M. Xavier, Multiplicity of self-similar solutions for a critical equation. J. Differ. Equ. 254 (2013) 2732--2743.

\bibitem{021}
M. F. Furtado, R. Ruviaro, J. P. P. da Silva, Two solutions for an elliptic equation with fast increasing weight and concave-convex nonlinearities. J. Math. Anal. Appl. 416 (2014) 698--709.

\bibitem{16}
M. F. Furtado, J. P. P. da Silva, Critical boundary value problems in the upper half-space. Nonlinear Anal. 212 (2021) 11pp.

\bibitem{4}
J. Harada, Stability of steady states for the heat equation with nonlinear boundary conditions. J. Differ. Equ. 255 (2013) 234--253.

\bibitem{2}
A. Haraux, F. B. Weissler, Nonuniqueness for a semilinear initial value problem. Indiana Univ. Math. J. 31 (1982) 167--189.


\bibitem{21}
B. Hu, Nonexistence of a positive solution of the Laplace equation with a nonlinear boundary condition. Differ.  Integral Equ. 7 (1994) 301--313.

\bibitem{23}
Y. Li, M. Zhu, Uniqueness theorems through the method of moving spheres. Duke Math. J. 80 (1995) 383--417.


\bibitem{114}
C.-S. Lin, W.-M. Ni, On the diffusion coefficient of a semilinear Neumann problem}. Springer Lecture Notes, vol. 1340, Springer, New York/Berlin, 1986.

\bibitem{113}
C.-S. Lin, W.-M. Ni, I. Takagi, Large amplitude stationary solutions to a chemotaxis system. J. Differ. Equ. 72 (1988) 1--27.

\bibitem{3}
N. Mizoguchi, E. Yanagida, Critical exponents for the blow-up of solutions with sign changes in a semilinear parabolic equation. Math. Ann. 307 (1997) 663--675.

\bibitem{5}
W.-M. Ni, I. Takagi, On the Neumann problem for some semilinear elliptic equations and systems of activator-inhibitor type. Trans. Amer. Math. Soc. 297 (1986) 351--368.

%\bibitem{115}
%W.-M. Ni, On the positive radial solutions of some semilinear elliptic equations on ${{\mathbb{R}}^{n}}$. Appl. Math. Optim. 9 (1983), 373-380.

\bibitem{26}
D. Pierotti, S. Terracini, On a Neumann problem involving two critical Sobolev exponents: remarks on geometrical and topological aspects. Calc. Var. Partial Differ. Equ. 5 (1997) 271--291.

\bibitem{7}
X. Wang. Neumann problems of semilinear elliptic equations involving critical Sobolev exponents. J. Differ. Equ. 93 (1991) 283--310.

\bibitem{13}
M. Willem, {Minimax theorems}. Progr. Nonlinear Differ Equ Appl. vol. 24, Birkh\"{a}user Boston, Inc, Boston, MA, 1996.

\bibitem{31}
Z. Xie, On Neumann problem for some semilinear elliptic equations involving critical Sobolev exponents. Acta Math. Sci. 18 (1998) 186--196.
}



\end{thebibliography}
\end{document}